\newtheorem{theorem}{Theorem}[section]
\newtheorem{lemma}[theorem]{Lemma}
\newtheorem{corollary}[theorem]{Corollary}
\newtheorem{proposition}[theorem]{Proposition}
\newtheorem{definition}[theorem]{Definition}
\newtheorem{example}[theorem]{Example}
\newtheorem{remark}[theorem]{Remark}
\title[Homotopy category over gentle algebras and Poset representations]{From the homotopy category of projective modules over gentle algebras to Poset representations}
\date{\today}
\author[Germán Benitez]{Germán Benitez}
\address{\noindent Departamento de Matem\'atica, Instituto de Ciências Exatas, Universidade Federal do Amazonas,  Manaus AM, Brazil}
\email{gabm03@gmail.com}
\author[Gustavo Costa]{Gustavo Costa}
\address{\noindent Centro de Matem\'atica, Computa\c{c}\~ao e Cogni\c{c}\~ao, Universidade Federal do ABC, Santo Andr\'e SP, Brazil}
\email{costagustt@gmail.com}
\begin{document}
\begin{abstract}
In \cite{BCP24}, the authors describe a triangulated structure of a quotient of a certain category of representations of posets, nowadays known as the Bondarenko's category. This category was essential in \cite{BM03} for classify all indecomposable objects of the derived category of gentle algebras. In view of this connection with the derived category, which possess a triangulated structure.  In this paper, we identify another triangulated structure for Bondarenko’s category, allowing us to utilize the functor  presented in \cite{BM03}.  This fucntor will  establishes a connection between the triangulated structure of the homotopy category of gentle algebras and the new triangulated structure of a quotient of a certain Bondarenko's category.
\end{abstract}
\subjclass{ 16G20,16G60, 16E35, 18E30}

\keywords{Gentle algebras, derived categories, representations of poset, triangulated category}
\allowdisplaybreaks
\maketitle
\tableofcontents
\section*{Introduction}
In 1975 V. Bondarenko in \cite{Bon75} define a certain   class of matrix with involution, at which was motivated by techniques called ``Self-Reproducibility'' established  	by L. A. Nazarova and A. V. Roiter in  \cite{NR73} for solve the Gelfand Problem (see \cite{Gel71}). This class of matrix are called nowadays of the  \emph{Bondarenko's matrix} are finite square block matrices $B=(B^{j}_i)_{i,j\in \mathcal{Y}}$ setting in $\Bbbk$, where $\mathcal{Y}$ is a linear ordered set with an involution $\sigma$ such that the following conditions hold:
\begin{enumerate}[(i)]
\item The number of rows
in each horizontal band $B_x$  is equal to the number of columns of each vertical band $B^x$ for all $x \in \mathcal{Y}$.
 
\item If $i,j \in \mathcal{Y}$ are such that $\sigma(i)=j$, then all matrices in $B_i$ (respectively, $B^i$) have the same number of rows (respectively, columns) as all matrices in $B_j$
 (respectively, $B^j$).
\item $B^2=0$.
\end{enumerate}

In view of this class of matrices, V. Bondarenko and Y. Drozd in \cite{BD82} define a relation between these classes of matrices. In other words, they describe morphisms between Bondarenko's matrices. In the sense, given two Bondarenko's matrices $B$ and  $C$, a \emph{morphism} from $B$ to $C$ is a block matrix $T=(T^{j}_{i})_{i,j \in \mathcal{Y}}$, with entries in $\Bbbk$ such that the following conditions hold:
\begin{enumerate}[(a)]
 \item The number of rows in each horizontal band $T_x$ (respectively, $C_x$) is equal to the number of columns of each vertical band $B^x$ (respectively, $T^x$) for all $x \in \mathcal{Y}$.
\item $TC=BT$.
\item If $i>j$, then $T^j_i=0$, where $<$ is the order relation in the poset $\mathcal{Y}$, i.e., all blocks below the main diagonal are 0.
\item If $\sigma(i)=j$, then $T^i_i=T^j_j$.
\end{enumerate}

 The \emph{Bondarenko's category} which the objects are Bondarenko's matrices and morphisms are as above, this category will be denoted by $s(\mathcal{Y},\Bbbk)$.

V. Bekkert and H. Merklen  in \cite{BM03}, found a connection between the derived category of a gentle algebra and a matrix problem presented by V. M. Bondarenko
\cite{Bon75}. Them show that the problem of finding the indecomposable objects of the derived category can be reduced to finding the indecomposable objects in the matrix problem. This techniques, was  adapted by V. Bekkert, E. N. Marcos and H. Merklen to classify the indecomposable objects of the derived category of Skewed-Gentle algebras in \cite{BNM03}, also by H. Giraldo and J. A. Vélez-Marulanda to describe Auslander-Reiten quiver of an algebra of dihedral type in \cite{GV16}. Moreover, the techniques used by V. Bekkert and H. Merklen were the starting point to research new classes of algebras in which is possible to describe indecomposable objects of its derived category, such algebras was introduced by  A. Franco, H. Giraldo and P. Rizzo \cite{FGR21} and called string almost gentle (SAG) algebras and SUMP algebras.

In light of its connection with the derived category, which possesses a triangulated structure, the authors G. Benitez, G. Costa, and L. Q. Pinto in  \cite{BCP24} introduce the quotient category $\kappa(\mathcal{Y},\Bbbk)=s(\mathcal{Y},\Bbbk)/\equiv$  and demonstrate that this category also has a triangulated structure. In this paper, we present a different triangulated structure for another quotient of Bondarenko's category, specifically for posets of the form $\mathcal{Y} \times \mathbb{Z}$.  Additionally, we utilize the functor defined in \cite{BM03} to show the existence of a triangulated functor from certain  homotopy  category of gentle algebras to a specific Bondarenko's category.

The  sections in this paper is structured as follows. In Section~\ref{sec:Bond} we introduce the Bondarenko's category associated to poset $\mathscr{Y}$ and we present the necessary notations used throughout the article, along with somebackground information relevant to the topic. Section \ref{sec:2} is introduced the standard $\mathcal{K}$-triangles which will be useful to define the family of distinguished triangles and contains the formulation of the main result with respectively proof. In Section \ref{sec:DG}, we define the homotopy category of gentle algebras and establish the existence of a triangulated functor from this category to a specific Bondarenko's category.




\section{ Bondarenko's category associated to poset \texorpdfstring{$\mathcal{Y} \times \mathbb{Z}$}{YxZ}}\label{sec:Bond}

In this paper we will denote by $\mathbb{Z}$ the set of integers, by $s(\mathcal{Y},\Bbbk)$ the Bondareko's category (as was defined in the introduction) for a  poset $\mathcal{Y}$ equipped with an involution $\sigma: \mathcal{Y}\longrightarrow\mathcal{Y}$. We are interesting to study certain quotient of $s(\mathcal{Y}\times \mathbb{Z},\Bbbk)$  (See Section \ref{sec:DG}), where the poset   
   $\mathcal{Y} \times \mathbb{Z}$ is equipped with the  anti-lexicographically order,  this means that
\begin{center}
    $[u,i]<[v,j]$ if and only if $i<j$ or $(i=j$ and $u<v)$,
\end{center}
 with involution $\sigma_{\mathcal{Y}\times \mathbb{Z}}$ on $\mathcal{Y}\times \mathbb{Z}$ is given by
 \begin{center}
 $\sigma_{\mathcal{Y}\times \mathbb{Z}}([u,i])=[v,j]$ if and only if $i=j$ and $\sigma(u)=v$.    
 \end{center}
 
Throughout this paper, for simplicity, the poset  $\mathcal{Y}\times \mathbb{Z}$ will be denoted by $\mathscr{Y}$ and by abuse of notation,  we will write $\sigma$ instead of $\sigma_{\mathcal{Y}\times \mathbb{Z}}$. The composition of two morphisms $f:X\longrightarrow Y$ and $g:Y\longrightarrow Z$ in a given category is denoted by $fg$.
 
Let us to start introducing the autofunctor $\llbracket  - \rrbracket:s(\mathscr{Y},\Bbbk)\longrightarrow s(\mathscr{Y},\Bbbk)$ given by $\llbracket B \rrbracket$ for objects and $\llbracket T \rrbracket$ for morphisms,  which are   defined by 
\begin{center}
$ \llbracket B \rrbracket_{[u,i]}^{[v,j]}=-B_{[u,i+1]}^{[v,j+1]}$ \ \ \    and \ \ \  $\llbracket T	 \rrbracket_{[u,i]}^{[v,j]}=T_{[u,i+1]}^{[v,j+1]}$, \ \ \    for all ${[u,i]},{[v,j]} \in \mathscr{Y}$.
\end{center}
Specifically,  the functor displace the horizontal band upward  and displace the vertical band for left.  

Set the matrix ${\Omega_T}$, for each morphism $T:B\longrightarrow C$ in $s(\mathscr{Y},\Bbbk)$,  
 where the  $({[u,i]},{[v,j]} )$th block has the following form
\begin{equation*}\label{eq4.1}
{\Omega_T}_{[u,i]}^{[v,j]}=\left(\begin{array}{cc}
    \llbracket B \rrbracket_{[u,i]}^{[v,j]} &T_{[u,i]}^{[v,j]}  \\ 
    (\mathbf{0}^{\llbracket B \rrbracket}_C)_{[u,i]}^{[v,j]} & C_{[u,i]}^{[v,j]}
    \end{array}\right)=\left(\begin{array}{cc}
    -B_{[u,i+1]}^{[v,j+1]} &T_{[u,i+1]}^{[v,j]}   \\
    (\mathbf{0}^{B}_C)_{[u,i]}^{[v,j+1]} & C_{[u,i]}^{[v,j]}
    \end{array}\right),
\end{equation*} 
with $\mathbf{0}^{\llbracket B \rrbracket}_{C}$ being the null morphism in $\textup{Hom}_{s(\mathscr{Y},\Bbbk)}(\llbracket B \rrbracket,C)$, sometimes can be  simplying $\mathbf{0}$ when no confusion. 

For a better exposition, the elements $[u,i]$ in $\mathscr{Y}$ will be denoted by $u_i$. And, for each object $B$ in $s(\mathscr{Y},\Bbbk)$, we define the \emph{support of} $B$ by the set $\mathscr{Y}_B := \{u_i \in \mathscr{Y}\mid  B_{u_i}\not=\emptyset\}$. In this way, the $(u_i, v_j )$th block has the form
	$$
    {\Omega_T}_{u_i}^{v_j}=\left(\begin{array}{cc}
    -B_{u_{i+1}}^{v_{j+1}} &T_{u_{i+1}}^{v_j}  \\ 
    (\mathbf{0}^{B}_C)_{u_i}^{v_{j+1}} & C_{u_i}^{v_j}
    \end{array}\right)
    $$
and, for each ${u_{i}},{v_{j}} \in \mathscr{Y}_{{\Omega_T}}=\mathscr{Y}_{\llbracket B \rrbracket}\cup \mathscr{Y}_{C}$ we can visualize such blocks in the following table

\vspace{.3cm}

\begin{center}
\begin{tabular}{|c||c|c|c|}
\hline
\mbox{$\left(\begin{array}{cc}
    -B_{u_{i+1}}^{v_{j+1}} &T_{u_{i+1}}^{v_{j}} \\
    (\mathbf{0}^B_C)_{u_{i}}^{v_{j+1}}  & C_{u_{i}}^{v_{j}} 
    \end{array}\right)$}  & \mbox{$v_j\in \mathscr{Y}_{\llbracket B \rrbracket}\cap \mathscr{Y}_{C}$} & \mbox{$v_j\in \mathscr{Y}_{\llbracket B \rrbracket}\setminus \mathscr{Y}_{C}$} &\mbox{$v_j\in \mathscr{Y}_{C}\setminus \mathscr{Y}_{\llbracket B \rrbracket}$} \\
\hline
\hline
$u_i\in \mathscr{Y}_{\llbracket B \rrbracket}\cap \mathscr{Y}_{C}$ & $\left(\begin{array}{cc}
    -B_{u_{i+1}}^{v_{j+1}}  &T_{u_{i+1}}^{v_{j}}   \\
    (\mathbf{0}^B_C)_{u_{i}}^{v_{j+1}}  & C_{u_{i}}^{v_{j}} 
    \end{array}\right)$	& $\left(\begin{array}{cc}
-B_{u_{i+1}}^{v_{j+1}}  \\
(\mathbf{0}^B_C)_{u_{i}}^{v_{j+1}} 	
\end{array}\right)$ 	&		$\left(\begin{array}{cc}
T_{u_{i+1}}^{v_{j}} \\ 
 C_{u_{i}}^{v_{j}} 
\end{array}\right)$	  \\
&&&\\
$u_i\in \mathscr{Y}_{\llbracket B \rrbracket}\setminus \mathscr{Y}_{C}$     				&  $\left(\begin{array}{cc}
-B_{u_{i+1}}^{v_{j+1}}  & T_{u_{i+1}}^{v_{j}} \\	
\end{array}\right)$	 & $\left(\begin{array}{cc}
-B_{u_{i+1}}^{v_{j+1}}  
\end{array}\right)$	 & 		$\left(T_{u_{i+1}}^{v_{j}} \right)$	 \\ 
&&&\\
$u_i\in \mathscr{Y}_{C}\setminus \mathscr{Y}_{\llbracket B \rrbracket}$ 			&  $\left(\begin{array}{cc}
(\mathbf{0}^B_C)_{u_{i}}^{v_{j+1}} & C_{u_{i}}^{v_{j}} 
\end{array}\right)$ &$\left((\mathbf{0}^B_C)_{u_{i}}^{v_{j+1}} \right)$&$\left(\begin{array}{cc}
C_{u_{i}}^{v_{j}} 	
\end{array}\right)$ \\

\hline
\end{tabular}
\end{center}


\vspace{.3cm}

\begin{lemma}
\label{lem:CT}
For any morphism $T\in\textup{Hom}_{s(\mathscr{Y},\Bbbk)}(B,C)$
	\begin{enumerate}[(i)]
	\item ${\Omega_T}$ is an object in $s(\mathscr{Y},\Bbbk)$.
	\item There exist morphisms 
$\iota\in \textup{Hom}_{s(\mathscr{Y},\Bbbk)}(C,{\Omega_T})$ and $\pi\in \textup{Hom}_{s(\mathscr{Y} ,\Bbbk)}({\Omega_T},\llbracket B \rrbracket) 
$ given by 
	$$
	\iota=\left(\begin{array}{cc}
    (\mathbf{0}_C^B)_{u_{i}}^{v_{j+1}} & (\textup{Id}_{C})_{u_{i}}^{v_{j}} \\
\end{array}\right)_{{u_{i}},{v_{j}}\in \mathscr{Y}}
	\ \ \ \ \text{ and } \ \ \ \ \ 
    \pi=\left(\begin{array}{cc}
    (\textup{Id}_{B})_{u_{i+1}}^{v_{j+1}}\\
    (\mathbf{0}_C^B)_{u_{i}}^{v_{j+1}}
\end{array}\right)_{{u_{i}},{v_{j}} \in \mathscr{Y}}.
	$$
	\end{enumerate}
\end{lemma}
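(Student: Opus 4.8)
The plan is to handle the two items by reducing each to the defining axioms of $s(\mathscr{Y},\Bbbk)$ and to the single relation $TC=BT$ guaranteed by $T$ being a morphism. Throughout I read $\Omega_T$ as an upper-triangular $2\times 2$ block matrix over the ``doubled'' index set, with diagonal blocks $\llbracket B\rrbracket$ and $C$ and upper-right block the half-shifted copy of $T$ whose $(u_i,v_j)$-block is $T_{u_{i+1}}^{v_j}$; the lower-left block is $\mathbf{0}$.

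For (i) I would check the three conditions defining an object of $s(\mathscr{Y},\Bbbk)$. The two dimension conditions are inherited: at each $x\in\mathscr{Y}$ the horizontal (resp.\ vertical) band of $\Omega_T$ is the concatenation of the corresponding bands of $\llbracket B\rrbracket$ and of $C$, so the equality of row and column counts, as well as the $\sigma$-compatibility, follow from the same properties of $C$ and of $\llbracket B\rrbracket$ — the latter being an object because $\llbracket-\rrbracket$ is an autofunctor of $s(\mathscr{Y},\Bbbk)$ — together with the fact that $\sigma$ fixes the $\mathbb{Z}$-coordinate on $\mathscr{Y}=\mathcal{Y}\times\mathbb{Z}$. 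The substantive point is $\Omega_T^2=0$. By upper-triangularity the two diagonal blocks of $\Omega_T^2$ are $\llbracket B\rrbracket^2=0$ and $C^2=0$, both again because $\llbracket B\rrbracket$ and $C$ are objects. Only the upper-right block of $\Omega_T^2$ requires a hand computation: its $(u_i,w_k)$-block is $\sum_{v_j}\llbracket B\rrbracket_{u_i}^{v_j}T_{v_{j+1}}^{w_k}+\sum_{v_j}T_{u_{i+1}}^{v_j}C_{v_j}^{w_k}$, and after shifting the summation index in the first sum by one in the $\mathbb{Z}$-direction this collapses to $-(BT)_{u_{i+1}}^{w_k}+(TC)_{u_{i+1}}^{w_k}=(TC-BT)_{u_{i+1}}^{w_k}$, which is $0$ by condition (b). I expect this re-indexing — making the shift $i\mapsto i+1$ interact correctly with the internal summation variable so that $TC=BT$ can be applied at the shifted position — to be the only genuinely delicate step; everything else is formal.

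For (ii), $\iota$ is the inclusion of the $C$-summand and $\pi$ the projection onto the $\llbracket B\rrbracket$-summand, so conditions (a), (c), (d) are immediate: each map is built from identity and zero blocks, the identity blocks ($\textup{Id}_C$ for $\iota$, the shifted $\textup{Id}_B$ for $\pi$) sit on the diagonal, hence vanish strictly below it and agree on $\sigma$-related diagonal positions. The remaining condition (b) is $\iota\,\Omega_T=C\,\iota$ and $\Omega_T\,\pi=\pi\,\llbracket B\rrbracket$. In the $2\times 2$ block picture $\iota=(\mathbf{0}\ \ \textup{Id}_C)$ and $\pi$ is the column with blocks $\textup{Id}_B$ and $\mathbf{0}$, and each identity follows from a single block multiplication; the key simplification is that the half-shifted off-diagonal block of $\Omega_T$ is always multiplied by a zero block of $\iota$ or $\pi$, so the shift never surfaces and no relation beyond those already used is needed. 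This shows $\iota$ and $\pi$ are morphisms in $s(\mathscr{Y},\Bbbk)$, completing the proof.
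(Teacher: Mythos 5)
Your proposal is correct and follows essentially the same route as the paper: a direct block computation showing that the only nontrivial entry of $\Omega_T^2$ reduces to $(TC-BT)_{u_{i+1}}^{v_j}=0$, and a block multiplication verifying $C\iota=\iota\Omega_T$ and $\Omega_T\pi=\pi\llbracket B\rrbracket$, with the partition and involution conditions inherited by construction. Your observations that the diagonal blocks of $\Omega_T^2$ are handled by $\llbracket B\rrbracket^2=0$ and $C^2=0$, and that the shifted $T$-block is always annihilated by a zero block of $\iota$ or $\pi$, are just a slightly tidier organization of the same calculation the paper writes out in full.
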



%
%

\begin{proof} 
To demonstrate that ${\Omega_T}$ is indeed an object, the properties involving the compatibility of partitions and involution are inherent by construction. Furthermore, $\Omega_T^2=0$ follows directly from the calculus in each $(u_i,v_j)$th block
  {\footnotesize{ \begin{align*}
   (\Omega_T^2)_{u_{i}}^{v_{j}}& =\sum_{w_k\in \mathcal{Y}}\left(\begin{array}{cc}
   -B_{u_{i+1}}^{w_{k}}& T_{u_{i+1}}^{w_{k}}  \\
    (\mathbf{0}_C^B)_{u_{i}}^{w_{k}} & C_{u_{i}}^{w_{k}}
\end{array}\right) \left(\begin{array}{cc}
   -B_{w_{k}}^{v_{j+1}}&T_{w_{k}}^{v_{j}}\\
    (\mathbf{0}_C^B)_{w_{k}}^{v_{j+1}}& C_{w_{k}}^{v_{j}}
\end{array}\right)
\\&= \left(\begin{array}{cc}
  \sum\limits_{w_k\in \mathscr{Y}} B_{u_{i}}^{w_{k}}B_{w_{k}}^{v_{j+1}}&\sum\limits_{w_k\in \mathscr{Y}}(T_{u_{i}}^{w_{k}}C_{w_{k}}^{v_{j+1}}-B_{u_{i}}^{w_{k}}T_{w_{k}}^{v_{j+1}})  \\
   (\mathbf{0}^B_C)_{u_{i}}^{v_{j+1}}&\sum\limits_{w_k\in \mathscr{Y}} C_{u_{i}}^{w_{k}}C_{w_{k}}^{v_{j+1}}
\end{array} \right)\\
 &= \left(\begin{array}{cc}
  (B^2)_{u_{i+1}}^{v_{j+1}}  & (TC-BT)_{u_{i+1}}^{v_{j}}\\
   (\mathbf{0}^B_C)_{u_{i}}^{v_{j+1}} & (C^2)_{u_{i}}^{v_{j}}
\end{array} \right) 
   \end{align*}}}
as $B^2=0$, $C^2=0$, $TC=BT$, then  $(\Omega_T^2)_{u_{i}}^{v_{j}}=0$, for all ${u_{i}},{v_{j}}\in \mathscr{Y}$.

	It suffices to demonstrate  for $\iota$, the proof for $\pi$ is analogous. Properties (a), (c), and (d) from the definition of a morphism in $s(\mathscr{Y},\Bbbk)$ follow directly { from the latter construction}. The equality $C\iota = \iota\Omega_T$ results from the following computations  in each $(u_i,v_j)$th block
{\footnotesize{\begin{align*}
  (  C\iota)^{v_j}_{u_i} &=\sum_{w_kk\in \mathscr{Y}}C^{w_k}_{u_i}\left(\begin{array}{cc}
    (\mathbf{0}_C^B)_{w_{k}}^{v_{j+1}} & (\textup{Id}_{C})_{w_{k}}^{v_{j}} \\
\end{array}\right)=C^{v_j}_{u_i}\left(\begin{array}{cc}
    (\mathbf{0}_C^B)^{v_{j+1}}_{v_j} & (\textup{Id}_{C})^{v_j}_{v_j} \\
\end{array}\right)=\left(\begin{array}{cc}
    (\mathbf{0}_C^B)_{u_i}^{v_{j+1}} & C^{v_j}_{u_i}   \\
\end{array} \right)\\
&= \left(\begin{array}{cc}
    (\mathbf{0}_C^B)_{u_i}^{u_{i+1}} & (\textup{Id}_{C})^{u_i}_{u_i}  \\
\end{array}\right) \left(\begin{array}{cc}
    -B_{u_{i+1}}^{v_{j+1}} &T_{u_{i+1}}^{v_j}  \\ 
    (\mathbf{0}^{B}_C)_{u_i}^{v_{j+1}} & C_{u_i}^{v_j}
    \end{array}\right)=
\sum_{w_k \in \mathscr{Y}}\left(\begin{array}{cc}
    (\mathbf{0}_C^B)_{u_i}^{w_k} & (\textup{Id}_{C})^{w_k}_{u_i}   \\
\end{array}\right)\left(\begin{array}{cc}
    -B_{w_{k}}^{v_{j+1}} &T_{w_{k}}^{v_j}  \\ 
    (\mathbf{0}^{B}_C)_{w_{k}}^{v_{j+1}} & C_{w_{k}}^{v_j}
    \end{array}\right)\\
&=(\iota \Omega_T)^{v_j}_{u_i}.
\end{align*}}}

\end{proof}

From now on, in this paper we will denote the morphism $\iota$ and $\pi$ of the latter lemma by $\iota_C$ and $\pi_{\llbracket B \rrbracket}$ respectively. It is worth pointing out that the $({u_{i}},{v_{j}})$th block
    $$
    ({\iota}_C)_{u_{i}}^{v_{j}}=\left(\begin{array}{cc}
     (\mathbf{0}^B_C)_{u_{i}}^{v_{j+1}} &(\textup{Id}_{C})_{u_{i}}^{v_{j}}  \\
\end{array}\right)
    $$
has $r(C_{u_i})$ rows and $c(\llbracket B \rrbracket^{v_j})+c(C^{v_j})$ columns, where $r(D_{u_i})$ (respectively, $c(D^{u_i})$) denote the number of rows (respectively, columns) in the horizontal (respectively, vertical) band $D_{u_i}$ (respectively,  $D^{u_i}$). Specifically, for $u_i\in \mathscr{Y}_{C}$ and $v_j\in \mathscr{Y}_{\llbracket B \rrbracket}\cup \mathscr{Y}_{C}$

\vspace{.3cm}

\begin{center}
\begin{tabular}{|c||c|c|c|}
\hline
\mbox{$\left(\begin{array}{cc}
(\mathbf{0}^B_C)_{u_{i}}^{v_{j+1}} & (\textup{Id}_C)_{u_{i}}^{v_{j}} 
\end{array}\right)$}  & \mbox{$v_j\in \mathscr{Y}_{\llbracket B \rrbracket}\cap \mathscr{Y}_{C}$} & \mbox{$v_j\in \mathscr{Y}_{\llbracket B \rrbracket}\setminus \mathscr{Y}_{C}$} & \mbox{$v_j\in \mathscr{Y}_{C}\setminus \mathscr{Y}_{\llbracket B \rrbracket}$}  \\
\hline
\hline

$u_i\in \mathscr{Y}_{C}$ 			&  $\left(\begin{array}{cc}
(\mathbf{0}_C^B)_{u_{i}}^{v_{j+1}} & (\textup{Id}_C)_{u_{i}}^{v_{j}} 
\end{array}\right)$ & $(\mathbf{0}_C^B)_{u_i}^{v_{j+1}}$& $(\textup{Id}_C)_{u_{i}}^{v_{j}}$ 
\\
\hline
\end{tabular}
\end{center}
Similarly, for the $(u_i,v_j)$th block  $\pi _{\llbracket B \rrbracket}$. 

\vspace{,2cm}

For a better exposition, in the examples \ref{ex:triangulo}, \ref{example 3.4} and \ref{exem3.6} we will use color blue to identify the indices of a matrix block.


\begin{example}
\label{ex:triangulo}
  Let us consider the poset $\mathcal{Y}=\left\{u<a<v<b\right\}$ with involution $\sigma$ given by
    $$
    \sigma(u)=v    
    \ \ \ \ \text{and }\ \ \ \ 
    \sigma(a)=b. 
    $$
For the following morphism $T:B\longrightarrow \llbracket B\rrbracket$ in $s(\mathscr{Y},\Bbbk)$
$$
{\footnotesize 
	\xymatrix{
	B={\left(\begin{array}{c||c|c|c|c}
     &{\blue u_1}&{\blue v_1} & {\blue a_2}&{\blue b_2}\\
     \hline\hline
    {\blue u_1} &0&0&-1&0\\ \hline
    {\blue v_1} &0&0&0&1\\
    \hline
    {\blue a_2} &0&0&0&0\\
    \hline
    {\blue b _2} &0&0&0&0 
    \end{array}\right)} \ar[rrrrr]^{T={\left(\begin{array}{c||c|c|c|c}
     &{\blue u_0}&{\blue v_0} & {\blue a_1}&{\blue b_1}\\
     \hline\hline
    {\blue u_1} &0&0&1&0\\ \hline
    {\blue v_1} &0&0&0&1\\
    \hline
    {\blue a_2} &0&0&0&0\\
    \hline
    {\blue b _2} &0&0&0&0 
    \end{array}\right)}}&&&&&
  \llbracket B\rrbracket={\left(\begin{array}{c||c|c|c|c}
     &{\blue u_0}&{\blue v_0} & {\blue a_1}&{\blue b_1}\\
     \hline\hline
    {\blue u_0} &0&0&1&0\\ \hline
    {\blue v_0} &0&0&0&-1\\
    \hline
    {\blue a_1} &0&0&0&0\\
    \hline
    {\blue b_1} &0&0&0&0 
    \end{array}\right)}}}
    $$
the object $\Omega_{T}$ is described of the following matrix
$$
 {\footnotesize{\Omega_{T}=\left(\begin{array}{c||cc|cc|cc|cc}
     &{\blue u_0}&&{\blue v_0} && {\blue a_1}&&{\blue b_1}\\
     \hline\hline
    {\blue u_0}&0&0&0&0&1&1&0&0\\
   			 &0&0&0&0&0&1&0&0\\ 
    \hline
   		 {\blue v_0}&0&0&0&0&0&0&-1&1\\
   			 &0&0&0&0&0&0&0&-1\\ 
    \hline
    {\blue a_1} &0&0&0&0&0&0&0&0\\
   			 &0&0&0&0&0&0&0&0\\ 
   			 \hline
    {\blue b_1}  &0&0&0&0&0&0&0&0\\
   		&0&0&0&0&0&0&0&0\\
 \end{array}\right)}}
$$
In this way we obtain of the following sequence of morphisms:
$$
	{\footnotesize
	\xymatrix{B\ar[r]_-{T}&{\llbracket B \rrbracket}\ar[rrrrrr]_-{\iota_{\llbracket B \rrbracket}={{\left(\begin{array}{c||c|c|c|c}
     &{\blue u_0}&{\blue v_0} & {\blue a_1}&{\blue b_1}\\
     \hline\hline
    {\blue u_0}&0\ \ 1&0 \ \ 0&0 \ \ 0&0\ \ 0\\
    \hline
   		 {\blue v_0}&0\ \ 0&0 \ \ 1&0 \ \ 0&0\ \ 0\\
    \hline
    {\blue a_1} &0\ \ 0&0 \ \ 0&0 \ \ 1&0\ \ 0\\ 
   			 \hline
    {\blue b_1}  &0\ \ 0&0 \ \ 0&0 \ \ 0&0\ \ 1
 \end{array}\right)}}}&&&&&&\Omega_T\ar[rrrr]_-{\pi_{\llbracket B\rrbracket}={{\left(\begin{array}{c||c|c|c|c}
     &{\blue u_0}&{\blue v_0} & {\blue a_1}&{\blue b_1}\\
     \hline\hline
    {\blue u_0}&1&0&0&0\\ 
&0&0&0&0\\
    \hline
   		 {\blue v_0}&0&1&0&0\\
    &0&0&0&0\\
    \hline
    {\blue a_1} &0&0&1&0\\
   			 &0&0&0&0\\ 
   			 \hline
    {\blue b_1}  &0&0&0&1\\
   		&0&0&0&0  
 \end{array}\right)}}}&&&& \llbracket B\rrbracket .}}$$ 
  
\end{example}


We will conclude this section with two technical lemmas that serve as further examples of morphisms in $s(\mathscr{Y},\Bbbk)$. These lemmas will be applied in propositions \ref{prop:TR2} and \ref{prop:Octae}, respectively.
\begin{lemma}\label{tr2:lemma}
   For any morphism $T\in\textup{Hom}_{s(\mathscr{Y},\Bbbk)}(B,C)$, there exist morphisms $R\in\textup{Hom}_{s(\mathscr{Y},\Bbbk)}(\llbracket B \rrbracket,{\Omega_{\iota_C}}) $ and $S\in\textup{Hom}_{s(\mathscr{Y},\Bbbk)}({\Omega_{\iota_C}},\llbracket B \rrbracket)$ given by
    $$
    R=\left(
\begin{array}{ccc}
    -T_{u_{i+1}}^{v_{j+1}} & (\textup{Id}_{B})_{u_{i+1}}^{v_{j+1}}& (\mathbf{0}^{C}_{B})_{u_{i+1}}^{v_{j}} \\
\end{array}
 \right)_{{u_{i}},{v_{j}}\in \mathscr{Y}}\ \  \ \ \ \ \text{ and } \ \ \ \ \ \   	S=\left(
\begin{array}{cc}
    (\mathbf{0}_C^{B})_{u_{i+1}}^{v_{j+1}}\\ 
    (\textup{Id}_{B})_{u_{i+1}}^{v_{j+1}}\\
    (\mathbf{0}_C^{B})_{u_{i}}^{v_{j+1}}
\end{array}
 \right)_{{u_{i}},{v_{j}}\in \mathscr{Y}}.$$
\end{lemma}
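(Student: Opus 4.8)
The plan is to verify directly that $R$ and $S$ satisfy the four defining conditions (a)--(d) of a morphism in $s(\mathscr{Y},\Bbbk)$, reading their strips against the block decomposition of $\Omega_{\iota_C}$. First I would record the explicit block form of $\Omega_{\iota_C}$. Since $\Omega_{\iota_C}$ is the object attached to the morphism $\iota_C\colon C\to\Omega_T$, substituting the blocks of $\iota_C$ and of $\Omega_T$ into the definition of $\Omega_{(-)}$ gives, in the $(u_i,v_j)$th block, a $3\times3$ array whose row-strips are indexed by $C_{u_{i+1}}$, $B_{u_{i+1}}$, $C_{u_i}$ and whose column-strips are indexed by $C^{v_{j+1}}$, $B^{v_{j+1}}$, $C^{v_j}$, namely
\[
(\Omega_{\iota_C})_{u_i}^{v_j}=\left(\begin{array}{ccc}
-C_{u_{i+1}}^{v_{j+1}} & 0 & (\textup{Id}_C)_{u_{i+1}}^{v_j}\\
0 & -B_{u_{i+1}}^{v_{j+1}} & T_{u_{i+1}}^{v_j}\\
0 & 0 & C_{u_i}^{v_j}
\end{array}\right).
\]
With this at hand, condition (a) is immediate: the three column-strips of $R$ (resp.\ the three row-strips of $S$) are forced to have exactly the widths $c(C^{v_{j+1}}),c(B^{v_{j+1}}),c(C^{v_j})$ (resp.\ the heights coming from $C_{u_{i+1}},B_{u_{i+1}},C_{u_i}$), and these match the band sizes of $\Omega_{\iota_C}$ and $\llbracket B\rrbracket$ by construction (the zero strips being of the size dictated by their position).

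Next I would dispose of conditions (c) and (d), which are inherited from the corresponding properties of $T$ and of the identity, exactly as in Lemma~\ref{lem:CT}. For (c) one uses that the anti-lexicographic order satisfies $u_i>v_j\Rightarrow u_{i+1}>v_{j+1}$, so below the diagonal the only potentially nonzero entries of $R$ and $S$, namely $T_{u_{i+1}}^{v_{j+1}}$ and $(\textup{Id}_B)_{u_{i+1}}^{v_{j+1}}$, vanish. For (d) one uses $\sigma(u_i)=\sigma(u)_i$ together with the diagonal compatibility $T_{u_{i+1}}^{u_{i+1}}=T_{\sigma(u_{i+1})}^{\sigma(u_{i+1})}$ of $T$, and the equality of the sizes of the bands $B_{u_{i+1}}$ and $B_{\sigma(u_{i+1})}$ guaranteed by axiom (ii) for Bondarenko's matrices, which forces the two identity strips to coincide.

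The only genuinely computational step is condition (b). For $R$ I would expand both $(R\,\Omega_{\iota_C})_{u_i}^{v_j}$ and $(\llbracket B\rrbracket\,R)_{u_i}^{v_j}$ as sums over $w_k\in\mathscr{Y}$, reindex via the bijection $w_k\mapsto w_{k+1}$ of $\mathscr{Y}$, and collapse the surviving identity strips. After this, each side becomes the row of strips whose first entry is $(TC)_{u_{i+1}}^{v_{j+1}}$ on one side and $(BT)_{u_{i+1}}^{v_{j+1}}$ on the other, whose second entry is $-B_{u_{i+1}}^{v_{j+1}}$ on both, and whose third entry vanishes because the two contributions cancel as $-T_{u_{i+1}}^{v_j}+T_{u_{i+1}}^{v_j}=0$. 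The two sides therefore agree precisely by the relation $TC=BT$ for $T$. For $S$ the analogous computation is shorter: both $(\Omega_{\iota_C}\,S)_{u_i}^{v_j}$ and $(S\,\llbracket B\rrbracket)_{u_i}^{v_j}$ collapse to the single nonzero strip $-B_{u_{i+1}}^{v_{j+1}}$, so the equality holds identically and does not even invoke $TC=BT$.

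The main obstacle is purely bookkeeping: keeping the index shift of $\llbracket-\rrbracket$ (which sends the band at $u_i$ to the band at $u_{i+1}$) consistent with the summation reindexing $w_k\mapsto w_{k+1}$, and correctly identifying which identity blocks survive each collapse. The sole piece of real mathematical input is isolated to one spot—the invocation of $TC=BT$ in the top-left strip of the $R$-computation—so once the block form of $\Omega_{\iota_C}$ above is written down, the verification is routine and parallels the proof of Lemma~\ref{lem:CT}.
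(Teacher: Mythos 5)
Your proposal is correct and follows essentially the same route as the paper: conditions (a), (c), (d) are inherited from $T$ and $\textup{Id}_B$ being morphisms, and condition (b) is checked by the same blockwise computation against the $3\times 3$ block form of $\Omega_{\iota_C}$, with the identity $TC=BT$ entering exactly in the first strip and the third strip cancelling as $-T_{u_{i+1}}^{v_j}+T_{u_{i+1}}^{v_j}=0$. The only difference is that the paper writes out the $R$ computation and declares $S$ analogous, whereas you sketch both.
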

\begin{proof}
We will only show that $R\in\textup{Hom}_{s(\mathscr{Y},\Bbbk)}(\llbracket B\rrbracket,\Omega_{\iota_C})$, because for $S$ the proof is analogous.
 
The properties (a), (c) and (d) of definition of morphism in $s(\mathscr{Y},\Bbbk)$,  follows from the fact that $\llbracket T\rrbracket$ and $\textup{Id}_{\llbracket B\rrbracket}$ are morphisms in $s(\mathscr{Y},\Bbbk)$. The equality $\llbracket B\rrbracket R=R\Omega_{\iota_C}$ is consequence of the following computations 
{\footnotesize{\begin{align*}
      ( \llbracket B\rrbracket R)_{u_i}^{v_j}
      &=\sum_{w_k\in \mathscr{Y}} -B_{u_{i+1}}^{w_k}\left(
\begin{array}{ccc}
    -T_{w_k}^{v_{j+1}} & (\textup{Id}_{B})_{w_k}^{v_{j+1}}& (\mathbf{0}^{C}_{B})_{w_k}^{v_j} \\
\end{array}
 \right)= \left(
\begin{array}{ccc}
    (BT)_{u_{i+1}}^{v_{j+1}} & -B_{u_{i+1}}^{v_{j+1}} & (\mathbf{0}^{C}_{B})_{u_{i+1}}^{v_j} \\
\end{array}
 \right)\\
 &=\sum_{w_k\in \mathscr{Y}} \left(
\begin{array}{ccc}
    -T_{u_{i+1}}^{w_k} & (\textup{Id}_{B})_{u_{i+1}}^{w_k}& (\mathbf{0}^{C}_{B})_{u_{i+1}}^{w_k} \\
\end{array}
 \right)\left(\begin{array}{cccc}
  -C_{w_k}^{v_{j+1}}   & (\mathbf{0}_{C}^{B})_{w_k}^{v_{j+1}}&  (\textup{Id}_C)_{w_k}^{v_j} \\ 
(\mathbf{0}_{B}^{C})_{w_k}^{v_{j+1}} & -B_{w_k}^{v_{j+1}} & T_{w_k}^{v_j} \\
    (\mathbf{0}_{C}^{C})_{w_k}^{v_{j+1}}  & (\mathbf{0}_{C}^{B})_{w_k}^{v_{j+1}} & C_{w_k}^{v_j}    
\end{array}\right)=(R\Omega_{\iota_C})_{u_i}^{v_j}
\end{align*}}}
for all $u_i,v_j \in \mathscr{Y}$.

\end{proof}
\begin{lemma}\label{tr6:lemma}
For any morphisms $S \in \textup{Hom}_{s(\mathscr{Y},\Bbbk)}(B,C)$ and $T \in \textup{Hom}_{s(\mathscr{Y},\Bbbk)}(C,D)$, there exist morphisms $F\in \textup{Hom}_{s(\mathscr{Y},\Bbbk)}(\Omega_S,\Omega_{ST})$, $G\in \textup{Hom}_{s(\mathscr{Y},\Bbbk)}(\Omega_{ST},\Omega_T)$ and $\Lambda\in \textup{Hom}_{s(\mathscr{Y},\Bbbk)}(\Omega_{T},\Omega_F)$ given by
 {\small{  $$
F = \left(\begin{array}{cc}
   (\textup{Id}_{B})_{u_{i+1}}^{v_{j+1}} & (\mathbf{0}_{B}^D)_{u_{i+1}}^{v_j}  \\ 
    (\mathbf{0}_C^{B})_{u_i}^{v_{j+1}} & T_{u_i}^{v_j}  
\end{array}\right)_{{u_i},{v_j} \in \mathscr{Y}}
\ \ \ \ \ \text{, }\ \ \ \ \ G = \left(\begin{array}{cc}
   S_{u_{i+1}}^{v_{j+1}}  & (\mathbf{0}_B^D)_{u_{i+1}}^{v_j}  \\ 
    (\mathbf{0}_D^{C})_{u_i}^{v_{j+1}} & (\textup{Id}_D)_{u_i}^{v_j}  
\end{array}\right)_{{u_i},{v_j} \in \mathscr{Y}}$$}}
and
$$
  \Lambda=\left(\begin{array}{ccccc}
     (\mathbf{0}_{C}^{B})_{u_{i+1}}^{v_{j+2}}& (\textup{Id}_{C})_{u_{i+1}}^{v_{j+1}} &
     (\mathbf{0}_{C}^{B})_{u_{i+1}}^{v_{j+1}}& (\mathbf{0}_{C}^{D})_{u_{i+1}}^{v_j}\\ 
     (\mathbf{0}_{D}^{B})_{u_i}^{v_{j+2}}& (\mathbf{0}_D^{C})_{u_i}^{v_{j+1}} &
     (\mathbf{0}_{D}^{B})_{u_i}^{v_{j+1}} & (\textup{Id}_D)_{u_i}^{v_j} 
\end{array}\right)_{{u_i},{v_j}\in \mathscr{Y}}. 
$$
\end{lemma}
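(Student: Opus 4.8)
The plan is to follow the same template used in the proofs of Lemmas~\ref{lem:CT} and~\ref{tr2:lemma}: for each of the three candidate block matrices one first checks conditions (a), (c) and (d) in the definition of a morphism of $s(\mathscr{Y},\Bbbk)$, and then verifies the commutativity condition (b) by a block computation. Conditions (a), (c), (d) are immediate from the shape of the formulas. The block sizes match because every entry is built from blocks of $S$, $T$, identities, or zero morphisms whose row and column counts agree with the horizontal and vertical bands of the relevant $\Omega$-objects; the upper-triangular condition (c) holds since the identity blocks appearing in the formulas are nonzero only on the diagonal $u_i=v_j$, while the blocks of $S$ and $T$ already vanish below the diagonal; and (d) holds because on the diagonal the formulas reduce to identity blocks together with diagonal blocks of $S$ or $T$, which are $\sigma$-invariant since $S$ and $T$ are morphisms. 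Thus the substance of the proof is condition (b) for $F$, $G$ and $\Lambda$, and since $ST$ is a composite of morphisms it is again a morphism, so that $\Omega_{ST}$ is a well-defined object.

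For $F\colon\Omega_S\to\Omega_{ST}$ I would compute $\Omega_S F$ and $F\Omega_{ST}$ block by block, exactly as in Lemma~\ref{lem:CT}. In both products the top-left and top-right entries collapse, via the identity block $(\textup{Id}_B)$, to $-B_{u_{i+1}}^{v_{j+1}}$ and $(ST)_{u_{i+1}}^{v_j}$, and the only entry where the two products could differ is the bottom-right one, which equals $(CT)_{u_i}^{v_j}$ in $\Omega_S F$ and $(TD)_{u_i}^{v_j}$ in $F\Omega_{ST}$; these agree precisely because $T$ is a morphism, i.e. $TD=CT$. The computation for $G\colon\Omega_{ST}\to\Omega_T$ is entirely symmetric: every entry matches except the top-left one, where $\Omega_{ST}G$ yields $-(BS)_{u_{i+1}}^{v_{j+1}}$ and $G\Omega_T$ yields $-(SC)_{u_{i+1}}^{v_{j+1}}$, and these coincide because $S$ is a morphism, i.e. $SC=BS$.

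The morphism $\Lambda$ requires more care because its target $\Omega_F$ is an iterated cone. Since $F$ has been shown to be a morphism, Lemma~\ref{lem:CT}(i) guarantees that $\Omega_F$ is a genuine object of $s(\mathscr{Y},\Bbbk)$; the first step is to unfold it. Applying the definition of $\Omega_{(-)}$ to $F\colon\Omega_S\to\Omega_{ST}$ and then substituting the $2\times 2$ block forms of $\Omega_S$, $F$ and $\Omega_{ST}$ expresses each block $(\Omega_F)_{u_i}^{v_j}$ as an upper-triangular $4\times 4$ block matrix in the data $B,C,D,S,T,ST$, with the characteristic index shifts $i\mapsto i+1, i+2$ coming from the functor $\llbracket-\rrbracket$. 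Written in this form, $\Lambda$ is a $2\times 4$ block matrix that selects the second and fourth block-rows of $\Omega_F$ through the identity blocks $(\textup{Id}_C)$ and $(\textup{Id}_D)$. I would then verify $\Omega_T\Lambda=\Lambda\Omega_F$ by direct multiplication: both products give the same $2\times 4$ block whose only nonzero entries are $-C_{u_{i+1}}^{v_{j+1}}$, $T_{u_{i+1}}^{v_j}$ and $D_{u_i}^{v_j}$, the identity blocks doing nothing more than picking out the correct components.

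I expect the only real obstacle to be bookkeeping: correctly unfolding $\Omega_F$ as a $4\times 4$ block object and tracking the successive index shifts $i\mapsto i+1\mapsto i+2$ produced by $\llbracket-\rrbracket$, so that the rows and columns selected by the identity blocks of $\Lambda$ line up with the intended rows and columns of $\Omega_F$. Once the $4\times 4$ form of $\Omega_F$ is written down explicitly, the verification is mechanical and, as in the previous lemmas, uses only $B^2=C^2=D^2=0$ together with the morphism relations $SC=BS$ and $TD=CT$.
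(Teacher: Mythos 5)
Your proposal is correct and follows essentially the same route as the paper: check (a), (c), (d) from the shape of the formulas, verify (b) for $F$ and $G$ by a two-by-two block multiplication that reduces to the morphism relations $CT=TD$ and $SC=BS$ respectively, and for $\Lambda$ unfold $\Omega_F$ as the upper-triangular $4\times 4$ block matrix in $B,C,D,S,T,ST$ and observe that both $\Omega_T\Lambda$ and $\Lambda\Omega_F$ yield the $2\times4$ block with nonzero entries $-C_{u_{i+1}}^{v_{j+1}}$, $T_{u_{i+1}}^{v_j}$, $D_{u_i}^{v_j}$. The only cosmetic difference is that you spell out the $G$ computation, which the paper leaves as ``analogous.''
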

\begin{proof}
We show that $F\in\textup{Hom}_{s(\mathscr{Y},\Bbbk)}({\Omega_S}, {\Omega_{ST}})$, the proof for $G\in\textup{Hom}_{s(\mathscr{Y},\Bbbk)}({\Omega_{ST}}, {\Omega_{T}})$ is analogous. The properties (a), (c) and (d) of the definition of morphism in $s(\mathscr{Y},\Bbbk)$,  follows from the fact that $T$ and $\textup{Id}_{\llbracket B \rrbracket}$ are morphisms in $s(\mathscr{Y},\Bbbk)$. The equality ${\Omega_S}F=F{\Omega_{ST}}$ is  consequence of the following computations 
{\small{\begin{align*}
   ( {\Omega_{S}}F)_{u_i}^{v_j}&= \sum_{w_k\in \mathscr{Y}}\left(\begin{array}{cc}
   -B_{u_{i+1}}^{w_k}  & S_{u_{i+1}}^{w_k}  \\ 
    (\mathbf{0}^{B}_C)_{u_i}^{w_k} & C_{u_i}^{w_k}  
\end{array}\right)\left(\begin{array}{cc}
   (\textup{Id}_{B})_{w_k}^{v_{j+1}} & (\mathbf{0}^D_B)_{w_k}^{v_j}  \\ 
    (\mathbf{0}_C^{B})_{w_k}^{v_{j+1}} & T_{w_k}^{v_j}  
\end{array}\right)
=\left(\begin{array}{cc}
   -B_{u_{i+1}}^{v_{j+1}} & (ST)_{u_{i+1}}^{v_j}  \\ 
    (\mathbf{0}_C^{B})_{u_i}^{v_{j+1}}& (CT)_{u_i}^{v_j}  
\end{array}\right),\\
   ( F{\Omega_{ST}})_{u_i}^{v_j}&= \sum_{w_k \in \mathscr{Y}}\left(\begin{array}{cc}
   (\textup{Id}_{B})_{u_{i+1}}^{w_k} & (\mathbf{0}^D_B)_{u_{i+1}}^{w_k}  \\ 
    (\mathbf{0}_C^{B})_{u_i}^{w_k} & T_{u_i}^{w_k} 
\end{array}\right)\left(\begin{array}{cc}
   -B_{w_k}^{v_{j+1}}   & (ST)_{w_k}^{v_j}   \\ 
    (\mathbf{0}^{B}_D)_{w_k}^{v_{j+1}} & D_{w_k}^{v_j}   
\end{array}\right)=\left(\begin{array}{cc}
   -B_{u_{i+1}}^{v_{j+1}}  & (ST)_{u_{i+1}}^{v_j} \\ 
     (\mathbf{0}^{B}_C)_{u_i}^{v_{j+1}} & (TD)_{u_i}^{v_j}  
\end{array}\right)
\end{align*}}}
for all $u_i,v_j \in \mathscr{Y}$.

 Finally, let us show that $
    \Lambda\in\textup{Hom}_{s(\mathscr{Y},\Bbbk)}({\Omega_T},{\Omega_F}).
    $ The properties (a), (c) and (d) of the definition of morphism in $s(\mathscr{Y},\Bbbk)$,  follows from the fact that $\textup{Id}_{C}$ and $\textup{Id}_{D}$  are morphisms in $s(\mathscr{Y},\Bbbk)$. The equality $\Omega_T\Lambda=\Lambda\Omega_{F}$ is a consequence of the following computations 
{\small{\begin{align*}
    (&\Lambda{\Omega_{F}} )_{u_i}^{v_j}\\ &=\sum_{w_k\in \mathscr{Y}}\!\!\left(\begin{array}{ccccc}
     (\mathbf{0}_{C}^{B})_{u_{i+1}}^{w_k} & (\textup{Id}_{C})_{u_{i+1}}^{w_k} &
     (\mathbf{0}_{C}^{B})_{u_{i+1}}^{w_k} &(\mathbf{0}_{C}^{D})_{u_{i+1}}^{w_k} \\
     (\mathbf{0}_{D}^{B})_{u_i}^{w_k} & (\mathbf{0}_D^{C})_{u_i}^{w_k} &
     (\mathbf{0}_{D}^{B})_{u_i}^{w_k} & (\textup{Id}_D)_{u_i}^{w_k} 
\end{array}\right)\!\!\left(\begin{array}{cc}
      ( \llbracket{\Omega_S\rrbracket})_{w_k}^{v_j}    & F_{w_k}^{v_j} \\
        (\mathbf{0}^{\llbracket{\Omega_S}\rrbracket}_{{\Omega_{ST}}})_{w_k}^{v_j}  & ({\Omega_{ST}})_{w_k}^{v_j}
     \end{array}\right)\\
    &=\sum_{w_k\in \mathscr{Y}}\!\!\left(\begin{array}{ccccc}
     (\mathbf{0}_{C}^{B})_{u_{i+1}}^{w_k} & (\textup{Id}_{C})_{u_{i+1}}^{w_k} &
     (\mathbf{0}_{C}^{B})_{u_{i+1}}^{w_k} &(\mathbf{0}_{C}^{D})_{u_{i+1}}^{w_k} \\ 
     (\mathbf{0}_{D}^{B})_{u_i}^{w_k} & (\mathbf{0}_D^{C})_{u_i}^{w_k} &
     (\mathbf{0}_{D}^{B})_{u_i}^{w_k} & (\textup{Id}_D)_{u_i}^{w_k} 
\end{array}\right)\!\!\left(\begin{array}{cccc}
        B_{w_k}^{v_{j+2}}  & -S_{w_k}^{v_{j+1}}& (\textup{Id}_{B})_{w_k}^{v_{j+1}} & (\mathbf{0}_B^D)_{w_k}^{v_j}  \\ 
          (\mathbf{0}_{C}^{B})_{w_k}^{v_{j+2}}  & -C_{w_k}^{v_{j+1}} & (\mathbf{0}_{C}^{B})_{w_k}^{v_{j+1}} & T_{w_k}^{v_j} \\ 
             (\mathbf{0}_{B}^{B})_{w_k}^{v_{j+2}} &(\mathbf{0}_{B}^{C})_{w_k}^{v_{j+1}} & -B_{w_k}^{v_{j+1}}& (ST)_{w_k}^{v_j}  \\ 
                 (\mathbf{0}_D^{B})_{w_k}^{v_{j+2}}& (\mathbf{0}_D^{C})_{w_k}^{v_{j+1}}&(\mathbf{0}_D^{B})_{w_k}^{v_{j+1}} &D_{w_k}^{v_j}  \\
     \end{array}\right)\\
     &=
\left(\begin{array}{ccccc}
     (\mathbf{0}_{C}^{B})_{u_{i+1}}^{v_{j+2}}& -C_{u_{i+1}}^{v_{j+1}} &
     (\mathbf{0}_{C}^{B})_{u_{i+1}}^{v_{j+1}}& T_{u_{i+1}}^{v_j}\\ 
     (\mathbf{0}_{D}^{B})_{u_i}^{v_{j+2}}& (\mathbf{0}_D^{C})_{u_i}^{v_{j+1}} &
     (\mathbf{0}_{D}^{B})_{u_i}^{v_{j+1}} & D_{u_i}^{v_j} 
\end{array}\right)   \\
&=\sum_{w_k\in \mathscr{Y}}\!\!\left(\begin{array}{cc}
    -C_{u_{i+1}}^{w_k} & T_{u_{i+1}}^{w_k}  \\ 
    (\mathbf{0}^{C}_D)_{u_i}^{w_k} &  D_{u_i}^{w_k}
\end{array}\right)\!\!\left(\begin{array}{ccccc}
     (\mathbf{0}_{C}^{B})_{w_k}^{v_{j+2}}& (\textup{Id}_{C})_{w_k}^{v_{j+1}} &
     (\mathbf{0}_{C}^{B})_{w_k}^{v_{j+1}}& (\mathbf{0}_{C}^{D})_{w_k}^{v_j}\\ 
     (\mathbf{0}_{D}^{B})_{w_k}^{v_{j+2}}& (\mathbf{0}_D^{C})_{w_k}^{v_{j+1}} &
     (\mathbf{0}_{D}^{B})_{w_k}^{v_{j+1}} & (\textup{Id}_D)_{w_k}^{v_j} 
\end{array}\right)\\
&=(\Omega_T\Lambda)_{u_i}^{v_j}
\end{align*}}}
for all $u_i,v_j \in \mathscr{Y}$.

\end{proof}

\section{ Triangulated structure for  \texorpdfstring{$\mathcal{K}(\mathcal{Y} \times \mathbb{Z},\Bbbk)$}{K(YxZ)}}\label{sec:2}

In this section, we introduce an appropriate triangulated structure for a certain quotient of the Bondarenko's category, which is slightly different to the quotient given in \cite{BCP24}, in the Example \ref{exem3.6} we show why another equivalence relation is needed. This triangulated structure will be used in Section \ref{sec:DG} to ensure the existence of a triangulated functor from a certain homotopy category to Bondarenko's category.  Let us start defining an equivalence relation $\simeq$ on
each Hom-set $\textup{Hom}_{s(\mathscr{Y},\Bbbk)}(B,C)$ by $S \simeq T$ if and only if there exist a matrix  $L$ (we will call \emph{$\mathcal{K}$-matrix}) satisfying: 

\begin{enumerate}
     \item[(i)] The horizontal (respectively, vertical) partition of $L$ is compatible with the vertical (respectively, horizontal) partition of $B$ (respectively, $C$).
     \item[(ii)] $S-T=BL+LC$.
     \item[(iii)] If $i>j+1$ or $(u>v$ and $i=j+1)$, then $L_{u_{i}}^{v_{j}}=0$. 
     \item[(iv)] If $\sigma(u_i)=v_j$, then $L_{u_{i}}^{u_{i}}=L_{v_{j}}^{v_{j}}$.

\end{enumerate}


\begin{remark}\label{remark2.1}
Let us mention three important points: 
\begin{enumerate}[(i)]
\item The condition $S-T=BL+LC$ implies that, in general, $L$ is not necessarily a morphism in $s(\mathscr{Y},\Bbbk)$.

\item The equivalence relation $\simeq$ is slightly different from the equivalence relation $\equiv$ defined in \cite[Section 3]{BCP24}, where $S \equiv T$ if and only if there exists a matrix $L$ (called a \emph{$\kappa$-matrix}) satisfying the same conditions \textup{(i)}, \textup{(ii)}, \textup{(iv)} and the condition \textup{(iii)} changed by 
\begin{center}
\textup{(iii')} If $i>j$ or $(u>v$ and $i=j)$, then $L_{u_{i}}^{v_{j}}=0$.
\end{center}
\item Note that $S\equiv T$ implies that $S\simeq T$, but the converse does not hold (see Example \ref{exem3.6}).
\end{enumerate} 
\end{remark}


Moreover, the morphisms equivalent to zero form a two-sided ideal in $s(\mathscr{Y},\Bbbk)$ (see \cite[Definition 3.1, p. 420]{ASS06} for the definition of ideal). To see this, let us consider the sets $\Sigma:=\left\{T \text{ morphism in } s(\mathscr{Y},\Bbbk)\mid T\simeq \mathbf{0}\right\}$ and $\Sigma(B, C) :=\Sigma \cap \textup{Hom}_{s(\mathscr{Y}, \Bbbk)}(B,C)$ where $B,C\in s(\mathscr{Y},\Bbbk)$, which are $\Bbbk$-vector spaces.


\begin{lemma}
\label{ideal}
$\Sigma$ is a two-sided ideal in $s(\mathscr{Y},\Bbbk).$ 
\end{lemma}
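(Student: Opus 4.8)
The plan is to establish the two defining properties of a two-sided categorical ideal in the sense of \cite[Definition 3.1]{ASS06}: that each $\Sigma(B,C)$ is a $\Bbbk$-subspace of $\textup{Hom}_{s(\mathscr{Y},\Bbbk)}(B,C)$, and that $\Sigma$ is absorbent under pre- and post-composition with arbitrary morphisms of $s(\mathscr{Y},\Bbbk)$. The subspace part is routine: if $S\simeq\mathbf 0$ and $T\simeq\mathbf 0$ are witnessed by $\mathcal K$-matrices $L_S$ and $L_T$, then for $\alpha,\beta\in\Bbbk$ the identity $\alpha S+\beta T=B(\alpha L_S+\beta L_T)+(\alpha L_S+\beta L_T)C$ holds, and since each of the conditions (i)--(iv) defining a $\mathcal K$-matrix is linear and homogeneous in $L$, the matrix $\alpha L_S+\beta L_T$ is again a $\mathcal K$-matrix. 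Thus $\alpha S+\beta T\in\Sigma(B,C)$, and $\mathbf 0\in\Sigma(B,C)$ via the zero $\mathcal K$-matrix.

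For absorption, let $T\in\Sigma(B,C)$ be witnessed by $T=BL+LC$. Given a morphism $g\colon C\to D$, I would use the intertwining relation $Cg=gD$ (property (b) for $g$) to rewrite $Tg=(BL+LC)g=B(Lg)+L(Cg)=B(Lg)+(Lg)D$; symmetrically, for a morphism $h\colon A\to B$ the relation $hB=Ah$ gives $hT=A(hL)+(hL)C$. Hence the problem reduces to checking that the product $Lg$ (respectively $hL$) is a $\mathcal K$-matrix for the pair $(B,D)$ (respectively $(A,C)$). Condition (i) is immediate from the compatibility of the partitions of $L$ with those of $B$ and $C$ together with the Bondarenko condition equating the row- and column-counts of $C$; condition (ii) is exactly the displayed identity; and condition (iii) follows by a block-index computation: a nonzero summand of $(Lg)_{u_i}^{v_j}=\sum_{w_k}L_{u_i}^{w_k}g_{w_k}^{v_j}$ forces $w_k\le v_j$ by the triangularity (c) of $g$ and $k\ge i-1$ (with the diagonal-band refinement) by (iii) for $L$, and in the anti-lexicographic order these two constraints are incompatible precisely on the positions that (iii) requires to vanish for $Lg$.

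The delicate step, and where I expect the main obstacle, is condition (iv), the involution-compatibility of the diagonal blocks. Because (iii) only enforces the \emph{relaxed} triangularity (it permits the band $i=j+1$ with $u\le v$), the diagonal block $(Lg)_{u_i}^{u_i}$ is not simply $L_{u_i}^{u_i}g_{u_i}^{u_i}$: it also collects contributions from the same $\mathbb Z$-level $k=i$ with $w\neq u$ and from the adjacent level $k=i-1$. To prove $(Lg)_{u_i}^{u_i}=(Lg)_{v_j}^{v_j}$ whenever $\sigma(u_i)=v_j$ (equivalently $j=i$ and $\sigma(u)=v$) one must combine the involution property (d) of $g$, namely $g_{u_i}^{u_i}=g_{v_i}^{v_i}$, with (iv) for $L$ to match the genuinely diagonal contributions, and then control the remaining off-diagonal contributions---which is subtle since $\sigma$ need not be order-preserving, so the index sets entering the two blocks differ. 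I anticipate that the clean way to finish is to observe that $Tg$ is itself a morphism (hence already satisfies (d)) and, if $Lg$ fails (iv) on the nose, to correct the witness by a matrix $N$ with $BN+ND=\mathbf 0$ that symmetrizes the diagonal blocks across each $\sigma$-orbit without disturbing conditions (i)--(iii) or the equation $Tg=B(Lg+N)+(Lg+N)D$, thereby producing a genuine $\mathcal K$-matrix and concluding $Tg\in\Sigma(B,D)$; the symmetric argument on the other side gives $hT\in\Sigma(A,C)$.
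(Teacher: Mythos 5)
Your overall route coincides with the paper's: you reduce absorption to showing that the product $Lg$ (respectively $hL$) of the witnessing $\mathcal{K}$-matrix with a morphism is again a $\mathcal{K}$-matrix, using $Cg=gD$ to rewrite $Tg=B(Lg)+(Lg)D$, and your sketch of condition (iii) is exactly the case analysis the paper carries out in detail (split the sum over $w_k$ according to $k=i-1$, $k>i-1$, $k<i-1$ and kill each part using (c) for $g$ or (iii) for $L$). The subspace verification you include is harmless; the paper assumes it when it declares the $\Sigma(B,C)$ to be $\Bbbk$-vector spaces.

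The gap is at condition (iv), precisely where you flag it, and it is a gap because your proposed repair is never executed. You correctly observe that the relaxed triangularity of $\mathcal{K}$-matrices makes the diagonal block of the product
\begin{equation*}
(Lg)_{u_i}^{u_i}=\sum_{w\le u}L_{u_i}^{w_i}\,g_{w_i}^{u_i}+\sum_{w\ge u}L_{u_i}^{w_{i-1}}\,g_{w_{i-1}}^{u_i}
\end{equation*}
carry genuinely off-diagonal contributions beyond $L_{u_i}^{u_i}g_{u_i}^{u_i}$, so that $(Lg)_{u_i}^{u_i}=(Lg)_{v_i}^{v_i}$ for $\sigma(u_i)=v_i$ does not follow from (iv) for $L$ and (d) for $g$ alone. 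But the ``correcting matrix $N$ with $BN+ND=\mathbf{0}$ that symmetrizes the diagonal blocks without disturbing (i)--(iii)'' is only anticipated, not constructed: you give no candidate for $N$, no argument that such an $N$ exists, and a matrix supported on diagonal blocks will not satisfy $BN+ND=\mathbf{0}$ in general. As written, the claim $Tg\in\Sigma(B,D)$ is therefore not proved. For comparison, the paper's own proof disposes of (i), (ii) and (iv) in one sentence as ``straightforward'' and spends its effort on (iii); so you have put your finger on the one point the paper glosses over, but you have not closed it --- either a direct verification that the extra terms in the two diagonal blocks agree, or an explicit construction of the correcting matrix, is still required before the lemma can be considered established by this argument.
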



\begin{proof}
We show that $FG\in \Sigma(B, D)$, where $F \in \Sigma(B,C)$ and $G \in \textup{Hom}_{s(\mathscr{Y},\Bbbk)}(C, D)$. In fact, since $F \simeq \mathbf{0}$, there exists a $\mathcal{K}$-matrix $L$ such that $F=BL+LC$, hence
	$$
	FG = BLG + LCG = BLG + LGD = B\Tilde{L} + \Tilde{L}D,
	$$ 
where $\Tilde{L} := LG$ and the second equality follows of the fact that $CG = GD$. 

To see that $\Tilde{L}$ is a $\mathcal{K}$-matrix, note that the conditions (i), (ii), and (iv) of the definition of $\mathcal{K}$-matrix are straightforward, because $L$ is a $\mathcal{K}$-matrix and $G$ is a morphism. And, to verify that $\Tilde{L}^{v_j}_{u_i} = 0$ whenever $i > j + 1$ or $(u > v \text{ and } i = j + 1)$, we consider the expression 
	$$
	\tilde{L}^{v_j}_{u_i}=\sum_{w_k\in \mathcal{Y}\times\mathbb{Z}}L^{w_k}_{u_i}G^{v_j}_{w_k}=\sum_{\substack{i=k+1\\ w \in \mathcal{Y}}}L^{w_k}_{u_i}G^{v_j}_{w_k}+\sum_{\substack{i< k+1\\ w \in \mathcal{Y}}}L^{w_k}_{u_i}G^{v_j}_{w_k}+\sum_{\substack{i>k+1\\ w \in \mathcal{Y}}}L^{w_k}_{u_i}G^{v_j}_{w_k}.
	$$
In the following cases we will apply over $G$ the item (c) of the definition of morphism and over $L$ the item (iii) of the definition of $\mathcal{K}$-matrix. Therefore:

\begin{itemize}
\item If $i>j+1$, we have $G^{v_j}_{w_k}=0$ in the first two summand and $L^{w_k}_{u_i }=0$ in the third summand, which imply that $\tilde{L}^{v_j}_{u_i}=0$. 

\item On the other hand, assuming that $i=j+1$ and $u>v$, we have

\begin{align*}
\tilde{L}^{v_j}_{u_i} &= \sum_{\substack{ u\leq w \in \mathcal{Y}}}L^{w_j}_{u_{j+1}}G^{v_j}_{w_j}+\sum_{\substack{ u>w \in \mathcal{Y}}}L^{w_j}_{u_{j+1}}G^{v_j}_{w_j}+\sum_{\substack{i< k+1\\ w \in \mathcal{Y}}}L^{w_k}_{u_i}G^{v_j}_{w_k}+\sum_{\substack{i>k+1\\ w \in \mathcal{Y}}}L^{w_k}_{u_i}G^{v_j}_{w_k}
\end{align*}
which is zero, because in the first and third summand $G^{v_j}_{w_k} = 0$, while in the second and fourth summand $L^{w_k}_{u_i} = 0$. 

\end{itemize}		
Thus, $\tilde{L}$ is a $\mathcal{K}$-matrix and   consequently $FG \in \Sigma(B, D)$. Similarly, we can conclude that $HF\in \Sigma(D, C)$ for any $F \in \Sigma(B,C)$ and $H \in \textup{Hom}_{s(\mathscr{Y},\Bbbk)}(D, B)$. Therefore, $\Sigma$ is a two-sided ideal in $s(\mathscr{Y},\Bbbk)$.

\end{proof}


From Lemma \ref{ideal}, we can define the quotient category $\mathcal{K}(\mathscr{Y},\Bbbk)$ over $s(\mathscr{Y},\Bbbk)$, to be the category with the objects Bondarenko's matrices and the group of morphisms is given by $ \textup{Hom}_{\mathcal{K}(\mathscr{Y},\Bbbk)}(B,C) = {\textup{Hom}_{s(\mathscr{Y},\Bbbk)}(B ,C)}/\Sigma(B,C)$, for each $B,C\in\mathcal{K}(\mathscr{Y},\Bbbk)$.  It is easy to check that the autofunctor $\llbracket -\rrbracket$ preserves  the equivalence relation $\simeq$, therefore the induced autofunctor $\llbracket-\rrbracket:\mathcal{K}(\mathscr{Y},\Bbbk)\longrightarrow \mathcal{K}(\mathscr{Y},\Bbbk)$ is well-defined.
The main goal in this section, is to give a triangulated structure for $\mathcal{K}(\mathscr{Y},\Bbbk)$. To this end, let us define the $\mathcal{K}$-\emph{standard triangle} for any morphism $T \in \textup{Hom}_{s(\mathscr{Y},\Bbbk)}(B,C)$ by the sequence of morphisms
	$$
    \xymatrix{
    B\ar[r]^-{T} &
    C\ar[r]^-{\iota_C} &
    {\Omega_{T}}\ar[r]^-{\pi _{\llbracket B \rrbracket}} &
    \llbracket B \rrbracket},
    $$
where $\iota_C$ and $\pi_{\llbracket B \rrbracket }$ are the morphisms from Lemma \ref{lem:CT}. For an example of a $\mathcal{K}$-standard triangle, see Example~\ref{ex:triangulo}. Now, we can define the family of distinguished triangles $\mathcal{D}$ to be triangles of the form 
	$$
    \xymatrix{
    X\ar[r]^-{u} &
    Y\ar[r]^-{v} &
    Z\ar[r]^-{w} &
    \llbracket X \rrbracket}\ \ \ \ \text{in\ \ \ $\mathcal{K}(\mathscr{Y},\Bbbk)$}
    $$ 
which is isomorphic to a $\mathcal{K}$-standard triangle in $\mathcal{K}(\mathscr{Y},\Bbbk)$.  In others words, there exists an isomophism of triangles in $\mathcal{K}(\mathscr{Y},\Bbbk)$
    $$
    \xymatrix{
    X\ar[r]^-{u}\ar[d]_-{\cong} &
    Y\ar[r]^-{v}\ar[d]_-{\cong} &
    Z\ar[r]^-{w}\ar[d]_-{\cong} &
    \llbracket X\rrbracket\ar[d]_-{\cong}\\
    B\ar[r]_-{T} &
    C\ar[r]_-{\iota_C} &
    \Omega_T\ar[r]_-{\pi_{\llbracket B\rrbracket}} &
    \llbracket B\rrbracket}
    $$
for some morphism $T:B\longrightarrow C$ in $s(\mathscr{Y},\Bbbk)$.

For a better exposition, we will denote
the triangle $
    \xymatrix{
    X\ar[r]^-{u} &
    Y\ar[r]^-{v} &
    Z\ar[r]^-{w} &
    \llbracket X \rrbracket}
    $ 
by the sextuple $(X,Y,Z,u,v,w)$. 
See \cite[Section 12.3, pp. 303--309]{DW17} or \cite[Section 1.1, pp. 1--9]{Hap88} for the definition and properties of triangulated category. 
	
The main result in this section (Theorem \ref{main 1}) states that the category $\mathcal{K}(\mathscr{Y},\Bbbk)$ with the autofunctor $\llbracket - \rrbracket:\mathcal{K}(\mathscr{Y},\Bbbk)\longrightarrow \mathcal{K}(\mathscr{Y},\Bbbk)$ and the family of distinguished  triangles {$\mathcal{D}$} is a triangulated category. We will dedicate this section to prove such result and we will use some technical  following remarks, lemmas and propositions in which has a similar spirit as in \cite{BCP24}.


\begin{remark}\label{remark2}
For any morphism  $T \in \textup{Hom}_{s(\mathscr{Y},\Bbbk)}(X,Y)$ denote  by  $\overline{T}$ the equivalence class of 
 $T$ in $\textup{Hom}_{\mathcal{K}(\mathscr{Y},\Bbbk)}(X,Y)$, hence  for any $u \in \textup{Hom}_{\mathcal{K}(\mathscr{Y},\Bbbk)}(X,Y)$, there exists a distinguished triangle $(X,Y,\Omega_T,\overline{T},\overline{\iota}_Y,\overline{\pi}_{\llbracket X\rrbracket})$ in $\mathcal{D}$, where $u=\overline{T}$. Moreover, the family {$\mathcal{D}$} is closed under isomorphism.

\end{remark}

Since some blocks in $s(\mathscr{Y}, \Bbbk)$ may be empty, the zero object in $s(\mathscr{Y}, \Bbbk)$ corresponds to the matrix where all the blocks are empty, which will be denoted by $\mathbb{O}$. 
\begin{proposition}
${\Omega_{\textup{Id}_B}}\cong \mathbb{O}$ in $\mathcal{K}(\mathscr{Y},\Bbbk)$, for any $B \in s(\mathscr{Y}, \Bbbk)$. 
\end{proposition}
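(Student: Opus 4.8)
The plan is to show that $\Omega_{\textup{Id}_B}$ becomes a zero object in the quotient $\mathcal{K}(\mathscr{Y},\Bbbk)$ by proving that its identity endomorphism is killed by the ideal $\Sigma$, i.e. $\textup{Id}_{\Omega_{\textup{Id}_B}} \simeq \mathbf{0}$. Recall that in any additive category an object $X$ is isomorphic to the zero object precisely when $\textup{Id}_X = 0$, since then every morphism into or out of $X$ factors through the zero endomorphism. Thus, after passing to $\mathcal{K}(\mathscr{Y},\Bbbk)$, it suffices to produce a single $\mathcal{K}$-matrix $L$ witnessing $\textup{Id}_{\Omega_{\textup{Id}_B}} = \Omega_{\textup{Id}_B}\, L + L\, \Omega_{\textup{Id}_B}$.

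Block by block, $\Omega_{\textup{Id}_B}$ is the mapping cone of an identity, whose top-right entry $(\textup{Id}_B)_{u_{i+1}}^{v_j}$ glues the copy of $\llbracket B \rrbracket$ to the copy of $C=B$. Mimicking the classical contracting homotopy of the cone of an isomorphism, I would take $L$ to be the ``shifted identity'' sitting in the opposite (bottom-left) corner, namely
$$L_{u_i}^{v_j} = \begin{pmatrix} \mathbf{0} & \mathbf{0} \\ (\textup{Id}_B)_{u_i}^{v_{j+1}} & \mathbf{0} \end{pmatrix}, \qquad u_i,\,v_j \in \mathscr{Y},$$
where the rows and columns of each block are split according to the $\llbracket B \rrbracket$- and $C$-bands of $\Omega_{\textup{Id}_B}$, exactly as in the block description of $\Omega_T$ above. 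This choice of partition gives condition (i) of a $\mathcal{K}$-matrix for free.

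Next I would verify condition (ii) through the two block products. Using $C=B$, $B^2=0$, and the compositions of identity blocks, one computes
$$(\Omega_{\textup{Id}_B}\, L)_{u_i}^{v_j} = \begin{pmatrix} (\textup{Id}_B)_{u_{i+1}}^{v_{j+1}} & \mathbf{0} \\ B_{u_i}^{v_{j+1}} & \mathbf{0} \end{pmatrix}, \qquad (L\, \Omega_{\textup{Id}_B})_{u_i}^{v_j} = \begin{pmatrix} \mathbf{0} & \mathbf{0} \\ -B_{u_i}^{v_{j+1}} & (\textup{Id}_B)_{u_i}^{v_j} \end{pmatrix},$$
so the off-diagonal $B$-terms cancel and the sum becomes the block-diagonal matrix $\begin{pmatrix} (\textup{Id}_B)_{u_{i+1}}^{v_{j+1}} & \mathbf{0} \\ \mathbf{0} & (\textup{Id}_B)_{u_i}^{v_j}\end{pmatrix}$, which is exactly $(\textup{Id}_{\Omega_{\textup{Id}_B}})_{u_i}^{v_j}$ (both identity blocks are nonzero precisely when $u_i=v_j$). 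Condition (iv) is then immediate, since every diagonal block $L_{u_i}^{u_i}$ involves $(\textup{Id}_B)_{u_i}^{u_{i+1}} = \mathbf{0}$, so all diagonal blocks of $L$ vanish.

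The step that deserves the most care, and the conceptual heart of the statement, is condition (iii). The only nonzero contribution to $L_{u_i}^{v_j}$ is $(\textup{Id}_B)_{u_i}^{v_{j+1}}$, which is nonzero only when $u_i = v_{j+1}$, that is, when $u=v$ and $i=j+1$. This lies exactly on the first superdiagonal that $\simeq$ permits (condition (iii) excludes only $i>j+1$, and the case $i=j+1$ with $u>v$), but which the stricter relation $\equiv$ of \cite{BCP24} would forbid. Hence $L$ is a legitimate $\mathcal{K}$-matrix, giving $\textup{Id}_{\Omega_{\textup{Id}_B}} \simeq \mathbf{0}$ and therefore $\Omega_{\textup{Id}_B} \cong \mathbb{O}$ in $\mathcal{K}(\mathscr{Y},\Bbbk)$. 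This is precisely the phenomenon that motivates replacing $\equiv$ by $\simeq$, in the spirit of Remark~\ref{remark2.1}.
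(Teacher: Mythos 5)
Your proof is correct and follows essentially the same route as the paper: you exhibit the identical $\mathcal{K}$-matrix $L$ (the shifted identity in the bottom-left block), carry out the same two block products to verify $\textup{Id}_{\Omega_{\textup{Id}_B}}=\Omega_{\textup{Id}_B}L+L\Omega_{\textup{Id}_B}$, and conclude via $\textup{Id}_{\Omega_{\textup{Id}_B}}\simeq\mathbf{0}$. Your added observation that the nonzero block of $L$ sits exactly on the superdiagonal allowed by $\simeq$ but forbidden by $\equiv$ is a correct and worthwhile remark, though not a different argument.
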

\begin{proof}
To show that ${\Omega_{\textup{Id}_B}}\cong \mathbb{O}$ is sufficiens see that  $\textup{Id}_{{\Omega_{\textup{Id}_B}}}\simeq \mathbf{0}$. To this end, consider the  matrix   
$L={\footnotesize\left(\begin{array}{cc}
    (\mathbf{0}_{B}^{B})_{u_{i+1}}^{v_{j+1}} & (\mathbf{0}_{B}^B)_{u_{i+1}}^{v_j}  \\
    (\textup{Id}_B)_{u_i}^{v_{j+1}} & (\mathbf{0}_B^B)_{u_i}^{v_j}
    \end{array}\right)_{{u_i},{v_j} \in \mathscr{Y}}}$, which  satisfies (i), (iii) and (iv) from definition of $\mathcal{K}$-matrix and the  equality $(\textup{Id}_{{\Omega_{\textup{Id}_B}}})_{u_i}^{v_j}=({\Omega_{\textup{Id}_B}}L)_{u_i}^{v_j}+(L{\Omega_{\textup{Id}_B}})_{u_i}^{v_j}$ (condition (ii)) is consequence of the following computations
    {\footnotesize{\begin{align*}
({\Omega_{\textup{Id}_B}}L)_{u_i}^{v_j}&=\sum_{w_k\in \mathscr{Y}}\left(\begin{array}{cc}
    -B_{u_{i+1}}^{w_k} & (\textup{Id}_B)_{u_{i+1}}^{w_k}  \\
    (\mathbf{0}_B^{B})_{u_i}^{w_k} & B_{u_i}^{w_k}
    \end{array}\right)\left(\begin{array}{cc}
    (\mathbf{0}_{B}^{B})_{w_k}^{v_{j+1}} & (\mathbf{0}_{B}^B)_{w_k}^{v_j}  \\ 
    (\textup{Id}_B)_{w_k}^{v_{j+1}} & (\mathbf{0}_B^B)_{w_k}^{v_j}
    \end{array}\right) =\left(\begin{array}{cc}
    (\textup{Id}_B)_{u_{i+1}}^{v_{j+1}} & (\mathbf{0}_{B}^B)_{u_{i+1}}^{v_j}  \\
    B_{u_i}^{v_{j+1}} & (\mathbf{0}_B^B)_{u_i}^{v_j}
        \end{array}\right),
    \end{align*}}}
    {\footnotesize{\begin{align*}
(L{\Omega_{\textup{Id}_B}})_{u_i}^{v_j}&=\sum_{w_k\in \mathscr{Y}}\left(\begin{array}{cc}
    (\mathbf{0}_{B}^{B})_{u_{i+1}}^{w_k}  & (\mathbf{0}_{B}^{B})_{u_{i+1}}^{w_k}   \\
    (\textup{Id}_B)_{u_i}^{w_k}  & (\mathbf{0}_B^B)_{u_i}^{w_k} 
    \end{array}\right)\left(\begin{array}{cc}
    -B_{w_k}^{v_{j+1}}  & (\textup{Id}_B)_{w_k}^{v_j}   \\
    (\mathbf{0}_B^{B})_{w_k}^{v_{j+1}} & B_{w_k}^{v_j}
    \end{array}\right)= \left(\begin{array}{cc}
    (\mathbf{0}_{B}^{B})_{u_{i+1}}^{v_{j+1}} & (\mathbf{0}_{B}^B)_{u_{i+1}}^{v_j}  \\
    -B_{u_i}^{v_{j+1}}& (\textup{Id}_B)_{u_i}^{v_j}
    \end{array}\right)
    \end{align*}}}for all  ${u_i},{v_j} \in \mathscr{Y}$. 
\end{proof}
\begin{corollary}\label{zerok}
For any $B \in s(\mathscr{Y},\Bbbk)$, the triangule  $(B,B,\mathbb{O},\textup{Id}_B,\mathbf{0},\mathbf{0} )$ is distinguished.
\end{corollary}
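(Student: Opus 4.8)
The plan is to realize $(B,B,\mathbb{O},\textup{Id}_B,\mathbf{0},\mathbf{0})$ as a triangle isomorphic, in $\mathcal{K}(\mathscr{Y},\Bbbk)$, to the $\mathcal{K}$-standard triangle attached to the identity morphism $\textup{Id}_B:B\longrightarrow B$, and then to invoke Remark \ref{remark2}, which asserts that $\mathcal{D}$ is closed under isomorphism. Setting $T=\textup{Id}_B$ (so that $C=B$) in the definition of a $\mathcal{K}$-standard triangle produces
$$
\xymatrix{
B\ar[r]^-{\textup{Id}_B} & B\ar[r]^-{\iota_B} & \Omega_{\textup{Id}_B}\ar[r]^-{\pi_{\llbracket B\rrbracket}} & \llbracket B\rrbracket},
$$
which lies in $\mathcal{D}$ by construction. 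It then remains only to compare this with the given triangle.

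The key input is the preceding Proposition, whose proof shows not merely that $\Omega_{\textup{Id}_B}\cong\mathbb{O}$ but that $\textup{Id}_{\Omega_{\textup{Id}_B}}\simeq\mathbf{0}$; hence $\Omega_{\textup{Id}_B}$ is a zero object of $\mathcal{K}(\mathscr{Y},\Bbbk)$. From this I would deduce that any morphism of $\mathcal{K}(\mathscr{Y},\Bbbk)$ with source or target $\Omega_{\textup{Id}_B}$ vanishes: for instance $\overline{\iota_B}=\overline{\iota_B}\,\overline{\textup{Id}_{\Omega_{\textup{Id}_B}}}=\overline{\iota_B}\,\overline{\mathbf{0}}=\overline{\mathbf{0}}$, and likewise $\overline{\pi_{\llbracket B\rrbracket}}=\overline{\mathbf{0}}$, composition in the quotient being well defined since $\Sigma$ is a two-sided ideal by Lemma \ref{ideal}. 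Thus the $\mathcal{K}$-standard triangle above is, viewed in $\mathcal{K}(\mathscr{Y},\Bbbk)$, the triangle $(B,B,\Omega_{\textup{Id}_B},\overline{\textup{Id}_B},\overline{\mathbf{0}},\overline{\mathbf{0}})$.

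Finally I would write down the candidate isomorphism of triangles
$$
\xymatrix{
B\ar[r]^-{\overline{\textup{Id}_B}}\ar[d]_-{\overline{\textup{Id}_B}} & B\ar[r]^-{\overline{\mathbf{0}}}\ar[d]_-{\overline{\textup{Id}_B}} & \mathbb{O}\ar[r]^-{\overline{\mathbf{0}}}\ar[d]_-{\overline{\mathbf{0}}} & \llbracket B\rrbracket\ar[d]_-{\overline{\textup{Id}_{\llbracket B\rrbracket}}}\\
B\ar[r]_-{\overline{\textup{Id}_B}} & B\ar[r]_-{\overline{\iota_B}} & \Omega_{\textup{Id}_B}\ar[r]_-{\overline{\pi_{\llbracket B\rrbracket}}} & \llbracket B\rrbracket
}
$$
in which the middle vertical arrow $\overline{\mathbf{0}}:\mathbb{O}\longrightarrow\Omega_{\textup{Id}_B}$ is invertible precisely because $\Omega_{\textup{Id}_B}\cong\mathbb{O}$. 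Commutativity of the three squares is immediate: the leftmost is trivial, while in each of the other two both composite paths factor through a zero object ($\mathbb{O}$ or $\Omega_{\textup{Id}_B}$) and hence equal $\overline{\mathbf{0}}$, using the vanishing of $\overline{\iota_B}$ and $\overline{\pi_{\llbracket B\rrbracket}}$ established above. There is no genuine obstacle here: the argument is entirely formal once the preceding Proposition is in hand, which is the sole nontrivial ingredient, since it simultaneously supplies the isomorphism $\mathbb{O}\cong\Omega_{\textup{Id}_B}$ and forces the two non-identity maps of the standard triangle to die in the quotient. By Remark \ref{remark2}, $(B,B,\mathbb{O},\textup{Id}_B,\mathbf{0},\mathbf{0})$ is therefore distinguished.
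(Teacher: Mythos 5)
Your argument is correct and is exactly the reasoning the paper leaves implicit: the Corollary follows from the preceding Proposition because $\Omega_{\textup{Id}_B}\cong\mathbb{O}$ makes the $\mathcal{K}$-standard triangle of $\textup{Id}_B$ isomorphic in $\mathcal{K}(\mathscr{Y},\Bbbk)$ to $(B,B,\mathbb{O},\textup{Id}_B,\mathbf{0},\mathbf{0})$, and $\mathcal{D}$ is closed under isomorphism by Remark~\ref{remark2}. Your additional observation that $\overline{\iota_B}$ and $\overline{\pi_{\llbracket B\rrbracket}}$ vanish because they factor through a zero object of the quotient is the right way to make the square-commutativity explicit.
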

Let us continue with the rotation property for distinguished triangles.

\begin{proposition}
\label{prop:TR2}
If $(X,Y,Z,u,v,w)$ is a distinguished triangle, then $(Y,Z,\llbracket X \rrbracket,v,w,-\llbracket u\rrbracket)$ is a distinguished triangle.
\end{proposition}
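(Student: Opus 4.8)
The plan is to reduce to a $\mathcal{K}$-standard triangle and then identify the rotated triangle with the standard triangle attached to $\iota_C$, using the morphisms $R$ and $S$ furnished by Lemma \ref{tr2:lemma} as the comparison isomorphism.

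First I would carry out the reduction. By Remark \ref{remark2} the family $\mathcal{D}$ is closed under isomorphism, so it suffices to treat a standard triangle. Since $(X,Y,Z,u,v,w)$ is distinguished, fix an isomorphism of triangles from it to a standard triangle $B\xrightarrow{T}C\xrightarrow{\iota_C}\Omega_T\xrightarrow{\pi_{\llbracket B\rrbracket}}\llbracket B\rrbracket$. Applying the autofunctor $\llbracket-\rrbracket$ to the vertical isomorphisms and using its functoriality, the rotation $(Y,Z,\llbracket X\rrbracket,v,w,-\llbracket u\rrbracket)$ becomes isomorphic in $\mathcal{K}(\mathscr{Y},\Bbbk)$ to the rotated standard triangle
\[
C\xrightarrow{\iota_C}\Omega_T\xrightarrow{\pi_{\llbracket B\rrbracket}}\llbracket B\rrbracket\xrightarrow{-\llbracket T\rrbracket}\llbracket C\rrbracket .
\]
Hence everything reduces to showing that this last triangle is distinguished.

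Next I would compare it with the $\mathcal{K}$-standard triangle of the morphism $\iota_C\colon C\to\Omega_T$, namely
\[
C\xrightarrow{\iota_C}\Omega_T\xrightarrow{\iota_{\Omega_T}}\Omega_{\iota_C}\xrightarrow{\pi_{\llbracket C\rrbracket}}\llbracket C\rrbracket .
\]
The comparison is built from the identity maps on $C$, $\Omega_T$ and $\llbracket C\rrbracket$, together with the morphism $R\in\textup{Hom}_{s(\mathscr{Y},\Bbbk)}(\llbracket B\rrbracket,\Omega_{\iota_C})$ of Lemma \ref{tr2:lemma} in the third slot, whose candidate inverse is $S$. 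Writing $\Omega_{\iota_C}$ in its natural three block-row/column decomposition with pieces $\llbracket C\rrbracket$, $\llbracket B\rrbracket$ and $C$ (exactly the matrix that appears in the proof of Lemma \ref{tr2:lemma}), a direct block computation shows that the leftmost square is an equality, that the rightmost square $R\,\pi_{\llbracket C\rrbracket}=-\llbracket T\rrbracket$ holds on the nose (the first block-column of $R$ is precisely $-\llbracket T\rrbracket$, and $\pi_{\llbracket C\rrbracket}$ projects onto that block), and that $RS=\textup{Id}_{\llbracket B\rrbracket}$ exactly.

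The remaining two identities hold only up to the equivalence $\simeq$, and producing the witnessing $\mathcal{K}$-matrices is the crux of the argument. Concretely, one computes that $\textup{Id}_{\Omega_{\iota_C}}-SR$ and $\iota_{\Omega_T}-\pi_{\llbracket B\rrbracket}R$ each have support concentrated in the contractible summand $\llbracket C\rrbracket\oplus C$ of $\Omega_{\iota_C}$ (an $\textup{Id}_C$-term together with a $T$- or $\llbracket T\rrbracket$-term), and I would exhibit $\mathcal{K}$-matrices built from shifted copies of $\textup{Id}_C$ — in the same spirit as the contraction used in the proof that $\Omega_{\textup{Id}_B}\cong\mathbb{O}$ — realising these differences as $\Omega_{\iota_C}L+L\,\Omega_{\iota_C}$ and $\Omega_T L'+L'\,\Omega_{\iota_C}$ respectively. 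The genuine difficulty is not the homotopy equation itself but verifying that $L$ and $L'$ satisfy the triangularity/support condition (iii) and the diagonal condition (iv) in the definition of a $\mathcal{K}$-matrix: the shift by one in the $\mathbb{Z}$-grading pushes the relevant entries exactly to the boundary case $i=j+1$, which is precisely where the relaxed relation $\simeq$ (rather than the stricter $\equiv$ of \cite{BCP24}) is indispensable. Once $L$ and $L'$ are checked to be $\mathcal{K}$-matrices, $R$ is an isomorphism in $\mathcal{K}(\mathscr{Y},\Bbbk)$ with inverse $S$, both squares commute in $\mathcal{K}(\mathscr{Y},\Bbbk)$, and the rotated standard triangle is isomorphic to the standard triangle of $\iota_C$; hence it is distinguished, and so is $(Y,Z,\llbracket X\rrbracket,v,w,-\llbracket u\rrbracket)$.
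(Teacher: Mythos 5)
Your proposal is correct and follows essentially the same route as the paper: reduce to a standard triangle, compare the rotated triangle with the $\mathcal{K}$-standard triangle of $\iota_C$ via the morphisms $R$ and $S$ of Lemma~\ref{tr2:lemma}, and witness the non-strict commutativities and $SR\simeq\textup{Id}_{\Omega_{\iota_C}}$ by explicit $\mathcal{K}$-matrices supported on shifted copies of $\textup{Id}_C$. The only cosmetic difference is that you orient the comparison with $R$ (so the square involving $\pi_{\llbracket C\rrbracket}$ commutes on the nose and the middle one only up to $\simeq$), whereas the paper uses $S$ downward (so the middle square is exact and the right one needs the homotopy); your observation that the $\textup{Id}_C$ entries land exactly in the boundary case $i=j+1$ permitted by $\simeq$ but not by $\equiv$ is precisely the point the paper is making.
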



\begin{proof}

Since the rotation property is compatible with isomorphisms of triangles, it is enough to prove  for a standard triangle $
    \xymatrix{
    B\ar[r]^-{T} &
    C\ar[r]^-{\iota_C} &
    {\Omega_T}\ar[r]^-{\pi _{\llbracket B \rrbracket}} &
    \llbracket B \rrbracket}
    $. In other words, we shall prove that
    $
    \xymatrix{
    C\ar[r]^-{\iota_C} &
    {\Omega_T}\ar[r]^-{\pi _{\llbracket B \rrbracket}} &
    \llbracket B \rrbracket\ar[r]^-{-\llbracket T \rrbracket} &
    \llbracket C \rrbracket}
    $ is a distinguished triangle. Now, consider the following diagram
     {\footnotesize{$$
    \xymatrix{
    C\ar[r]^{\iota_C}\ar[d]_{\textup{Id}_C} & {\Omega_T}\ar[r]^-{\iota_{{\Omega_T}}}\ar[d]_{\textup{Id}_{{\Omega_T}}} & {\Omega_{\iota_C}}\ar[r]^{\pi_{\llbracket C \rrbracket}}\ar[d]_{S}   & \llbracket C \rrbracket\ar[d]^{\textup{Id}_{\llbracket C \rrbracket}}\\
    C\ar[r]_{\iota_C} & {\Omega_T}\ar[r]_-{\pi _{\llbracket B \rrbracket}} & \llbracket B \rrbracket\ar[r]_{-\llbracket T \rrbracket}   & \llbracket C \rrbracket}
    $$}}where $S$ is the morphism given in Lemma \ref{tr2:lemma}. The commutativity in $\mathcal{K}(\mathscr{Y},\Bbbk)$ of the latter diagram follows to the fact that $\iota_{{\Omega_T}}S=\pi _{\llbracket B \rrbracket}$ and $\pi_{\llbracket C \rrbracket}+S\llbracket T \rrbracket={\Omega_{\iota_C}}L+ L\llbracket C \rrbracket$, where 
    $$
    L=\left(
\begin{array}{cc}
     (\mathbf{0}_{{C}}^{C})_{u_{i+1}}^{v_{j+1}} \\
       (\mathbf{0}_{B}^{C})_{u_{i+1}}^{v_{j+1}} \\ 
       (\textup{Id}_C)_{u_i}^{v_{j+1}}
\end{array}
\right)_{{u_i},{v_j}\in \mathscr{Y}}
    $$
is a $\mathcal{K}$-matrix. To show that $S$ is an isomorphism in $\mathcal{K}(\mathscr{Y},\Bbbk)$, it is enough to consider the morphism $R$ introduced in Lemma~\ref{tr2:lemma}, and then note that $RS=\textup{Id}_{\llbracket B \rrbracket}$ and
$\textup{Id}_{{\Omega_{\iota_C}}}-SR={L}{\Omega_{\iota_C}}+{\Omega_{\iota_C}}L$, where 
{\small{$$
L= \left(\begin{array}{cccc}
   (\mathbf{0}_{C}^{C})_{u_{i+1}}^{v_{j+1}}   & (\mathbf{0}_{C}^{B})_{u_{i+1}}^{v_{j+1}}   &   (\mathbf{0}_{C}^C)_{u_{i+1}}^{v_j}\\ 
(\mathbf{0}_{B}^{C})_{u_{i+1}}^{v_{j+1}}   & (\mathbf{0}_{B}^{B})_{u_{i+1}}^{v_{j+1}}   & (\mathbf{0}^{C}_{B})_{u_{i+1}}^{v_j} \\
    (\textup{Id}_{C})_{u_i}^{v_{j+1}}  & (\mathbf{0}_{C}^{B})_{u_i}^{v_{j+1}} & (\mathbf{0}_{C}^C)_{u_i}^{v_j}    
\end{array}\right)_{u_i,v_j\in \mathscr{Y}}
$$}}
is a $\mathcal{K}$-matrix.
\end{proof}

In light of Remark~\ref{remark2}, Corollary \ref{zerok}, and  propositions~\ref{prop:TR2} and  \ref{prop:TR3}, we can guarantee that $\mathcal{K}(\mathscr{Y},\Bbbk)$ is a pretriangulated category.
\begin{proposition}
\label{prop:TR3}
If $(X,Y,Z,u,v,w)$ and  $(X',Y',Z',u',v',w')$ are distinguished triangles, then for any  $f\in \textup{Hom}_{\mathcal{K}(\mathscr{Y},\Bbbk)}(X,X')$ and  $g\in\textup{Hom}_{\mathcal{K}(\mathscr{Y},\Bbbk)}(Y,Y')$ of morphisms such that $fu'=ug$, there exists a morphism $h\in\textup{Hom}_{\mathcal{K}(\mathscr{Y},\Bbbk)}(Z,Z')$ such that the following diagram commutes in  $\mathcal{K}(\mathscr{Y},\Bbbk)$
    $$
    \xymatrix{X \ar[d]_{ f}\ar[r]^{u} &{Y}\ar[d]_{g}\ar[r]^{v} & Z\ar@{-->}[d]_{\exists h}\ar[r]^{ w} &\llbracket X \rrbracket\ar[d]^{\llbracket f \rrbracket} \\
    X'\ar[r]_{u'} & Y'\ar[r]_{v'} & Z'\ar[r]_{w'} & \llbracket X' \rrbracket}
    $$
\end{proposition}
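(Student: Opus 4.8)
The plan is to reduce to standard triangles and then produce the missing morphism as a ``mapping cone'' map, exactly as one does in the homotopy category of complexes. Since the family $\mathcal{D}$ is closed under isomorphism (Remark~\ref{remark2}), I would first use the vertical isomorphisms of triangles to replace both rows by standard triangles; conjugating $f,g$ by these isomorphisms produces morphisms $f\in\textup{Hom}_{s(\mathscr{Y},\Bbbk)}(B,B')$ and $g\in\textup{Hom}_{s(\mathscr{Y},\Bbbk)}(C,C')$ which still satisfy $\overline{fT'}=\overline{Tg}$ in $\mathcal{K}(\mathscr{Y},\Bbbk)$, and once $h$ is found for the standard rows it can be transported back. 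So it suffices to treat $X=B,\ Y=C,\ Z=\Omega_T$ with $(u,v,w)=(T,\iota_C,\pi_{\llbracket B\rrbracket})$ and the analogous primed data. The hypothesis $\overline{fT'}=\overline{Tg}$ then yields a $\mathcal{K}$-matrix $L$ with $fT'-Tg=BL+LC'$.

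Guided by the block shape of $\Omega_T$, I would set
$$
h_{u_i}^{v_j}=\left(\begin{array}{cc} f_{u_{i+1}}^{v_{j+1}} & -L_{u_{i+1}}^{v_j}\\ (\mathbf{0}_C^{B'})_{u_i}^{v_{j+1}} & g_{u_i}^{v_j}\end{array}\right)_{u_i,v_j\in\mathscr{Y}},
$$
i.e. $h=\left(\begin{smallmatrix}\llbracket f\rrbracket & \eta\\ 0 & g\end{smallmatrix}\right)$ with $\eta_{u_i}^{v_j}=-L_{u_{i+1}}^{v_j}$. Because $\iota_{C'}$ is concentrated in its $C'$-column and $\pi_{\llbracket B\rrbracket}$ in its $\llbracket B\rrbracket$-row (Lemma~\ref{lem:CT}), the identities $\iota_C h=g\,\iota_{C'}$ and $h\,\pi_{\llbracket B'\rrbracket}=\pi_{\llbracket B\rrbracket}\llbracket f\rrbracket$ hold \emph{on the nose}; hence the two remaining squares will commute in $\mathcal{K}(\mathscr{Y},\Bbbk)$ as soon as $h$ is shown to be a morphism, and all the content lies in checking $h\in\textup{Hom}_{s(\mathscr{Y},\Bbbk)}(\Omega_T,\Omega_{T'})$.

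I would verify property (b), $h\,\Omega_{T'}=\Omega_T\,h$, by the usual $2\times2$ block multiplication: the top-left and bottom-right corners reduce to $Bf=fB'$ and $Cg=gC'$ (property (b) for $f$ and $g$), the bottom-left corner is trivially zero, and the top-right corner becomes the single identity whose $(u_i,v_j)$ block reads $(\llbracket B\rrbracket\eta-\eta C')_{u_i}^{v_j}=(fT'-Tg)_{u_{i+1}}^{v_j}$. The choice $\eta_{u_i}^{v_j}=-L_{u_{i+1}}^{v_j}$ is forced precisely so that the left-hand side collapses to $(BL+LC')_{u_{i+1}}^{v_j}=(fT'-Tg)_{u_{i+1}}^{v_j}$, as required. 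Property (a) is automatic from the shifts, and (c) is routine: condition (iii) of the $\mathcal{K}$-matrix $L$ gives $\eta_{u_i}^{v_j}=-L_{u_{i+1}}^{v_j}=0$ whenever $u_i>v_j$, matching the vanishing of $f,g$ below the diagonal.

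The hard part is property (d). For $\sigma(u)=v$ one needs $h_{u_i}^{u_i}=h_{v_i}^{v_i}$; the diagonal corners give $f_{u_{i+1}}^{u_{i+1}}=f_{v_{i+1}}^{v_{i+1}}$ and $g_{u_i}^{u_i}=g_{v_i}^{v_i}$ by property (d) for $f,g$, but the off-diagonal corner demands $L_{u_{i+1}}^{u_i}=L_{v_{i+1}}^{v_i}$. This is a \emph{subdiagonal} block of $L$ (its $\mathbb{Z}$-indices differ by one), which condition (iii) \emph{permits} to be nonzero and which condition (iv)---governing only the true diagonal blocks---does not constrain. This is exactly where $\simeq$ departs from the relation $\equiv$ of \cite{BCP24}: under (iii$'$) such a block would be forced to vanish and (d) would be automatic, whereas under (iii) it need not. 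I therefore expect the crux to be a \emph{symmetrization of the homotopy}: one replaces $L$ by $L+\xi$, where $\xi$ solves the homogeneous equation $\llbracket B\rrbracket\xi=\xi C'$ (so that (b) is preserved) and is chosen upper-triangular (so that (c) is preserved), with diagonal blocks realizing the prescribed antisymmetric differences $\xi_{u_i}^{u_i}-\xi_{v_i}^{v_i}=L_{u_{i+1}}^{u_i}-L_{v_{i+1}}^{v_i}$. Proving that such a $\xi$ exists---equivalently, that the asymmetry of the subdiagonal blocks of any homotopy can be absorbed---is the step I expect to require the real work, and it is presumably where the involution compatibility (condition (ii)) of $B$ and $C'$ must genuinely be used.
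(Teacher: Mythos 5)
Your construction of $h$ is exactly the one in the paper's proof of Proposition~\ref{prop:TR3}: after reducing to standard triangles, the paper takes the $\mathcal{K}$-matrix $L$ witnessing $FT'\simeq TG$ and sets $H_{u_i}^{v_j}=\left(\begin{smallmatrix} F_{u_{i+1}}^{v_{j+1}} & L_{u_{i+1}}^{v_j}\\ 0 & G_{u_i}^{v_j}\end{smallmatrix}\right)$ (your minus sign is absorbed into the sign convention for the homotopy), checks (a)--(d) and the intertwining relation by block multiplication, and then verifies, exactly as you predict, that the two remaining squares commute \emph{on the nose}, not merely up to $\simeq$. Up to your last paragraph the two arguments coincide.

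The divergence is in property (d), and you have put your finger on a genuine soft spot of the paper --- but your proposed repair is not the paper's, and as written your proof is incomplete precisely at the step you yourself flag as ``the real work.'' The paper dismisses (d) as straightforward ``since $L$ is a $\mathcal{K}$-matrix,'' which implicitly uses $L_{u_{i+1}}^{u_i}=L_{v_{i+1}}^{v_i}$ for $\sigma(u)=v$. You are right that the \emph{literal} condition (iv) constrains only the true diagonal blocks $L_{u_i}^{u_i}$ and says nothing about the subdiagonal blocks that actually enter $H_{u_i}^{u_i}$. The intended reading, however, is that (iv) is shifted one step in the $\mathbb{Z}$-direction in parallel with the shift from (iii$'$) to (iii): every $\mathcal{K}$-matrix the paper actually produces (in propositions~\ref{prop:TR2} and \ref{prop:Octae}, and the homotopies $\mathcal{S}$ arising from $\mathbf{F}$ in Proposition~\ref{equiv}, whose subdiagonal blocks $\mathcal{S}^{[u,i]}_{[u,i+1]}=\mathbf{S}_{e_{t(u)},i+1}$ depend only on $t(u)$ and hence are automatically $\sigma$-symmetric) is supported on the subdiagonal, where the literal (iv) is vacuous. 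Under that shifted reading, (d) is immediate and no symmetrization is needed. Your alternative --- adjusting $L$ by a solution $\xi$ of $\llbracket B\rrbracket\xi=\xi C'$ with prescribed subdiagonal asymmetry --- is left unexecuted, and I do not see how to execute it in general: nothing guarantees such a $\xi$ exists for arbitrary $B$, $C'$, and the asymmetric part of $L$ need not be absorbable into the homogeneous equation. So the correct completion of your argument is definitional (strengthen/reinterpret condition (iv) on $\mathcal{K}$-matrices so that it governs the blocks $L^{v_j}_{u_{j+1}}$), not a new homotopy; with that reading your proof closes and agrees with the paper's.
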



\begin{proof}

Again, it suffices to prove this proposition for standard triangles. By assumption we have a diagram
\begin{align}\label{diagram3.7}
\xymatrix{{B} \ar[d]_{ F}\ar[r]^-{ T} & {C}\ar[d]_-{G}\ar[r]^-{\iota_{C}} & {{\Omega_T }}\ar@{-->}[d]_-{H}\ar[r]^-{\pi _{\llbracket B \rrbracket}} &{\llbracket B \rrbracket}\ar[d]^-{\llbracket F \rrbracket} \\
{B'}\ar[r]_-{T'} & {C'}\ar[r]_-{ \iota_{C'} } & {{\Omega_{T'}} }\ar[r]_-{\pi_{\llbracket B' \rrbracket} } & {\llbracket B' \rrbracket}}
\end{align}
where the left square commutes in $\mathcal{K}(\mathscr{Y},\Bbbk)$. This implies that there exists a $\mathcal{K}$-matrix $L$ such that $FT'\simeq TG$. From property (i) of $\mathcal{K}$-matrix we can consider the matrix
    $$
    H =\left(\begin{array}{cc}
   \llbracket F \rrbracket_{u_i}^{v_j}  & L_{u_i}^{v_j}  \\ 
    (\mathbf{0}_{C}^{\llbracket B' \rrbracket})_{u_i}^{v_j} &  G_{u_i}^{v_j} 
\end{array}\right)_{{u_i},{v_j} \in \mathscr{Y}}=\left(\begin{array}{cc}
   F_{u_{i+1}}^{v_{j+1}}  & L_{u_{i+1}}^{v_j}   \\
    (\mathbf{0}_{C}^{B'})_{u_i}^{v_{j+1}} &  G_{u_i}^{v_j} 
\end{array}\right)_{{u_i},{v_j} \in \mathscr{Y}}.
    $$
Let us  show that $H\in\textup{Hom}_{s(\mathscr{Y},\Bbbk)}({\Omega_T},{\Omega_{T'}} )$. The properties (a) and (d) of the definition of morphism in $s(\mathscr{Y},\Bbbk)$,  is straightforward of the fact that $F$ and $G$ are morphisms in $s(\mathscr{Y},\Bbbk)$ and $L$ is a $\mathcal{K}$-matrix. 

To see that $H^{v_j}_{u_i}=0$  for all $v_j<u_i \in \mathscr{Y}$ (item (c) of morphism). First of all, note that
  $$H_{u_i}^{v_j} =\left(\begin{array}{cc}
   F_{u_{i+1}}^{v_{j+1}}  & L_{u_{i+1}}^{v_j}   \\
    (\mathbf{0}_{C}^{B'})_{u_i}^{v_{j+1}} &  G_{u_i}^{v_j} 
\end{array}\right)= \left(\begin{array}{cc}
   (\mathbf{0}_B^{B'})_{u_{i+1}}^{v_{j+1}}  & L_{u_{i+1}}^{v_j}   \\
    (\mathbf{0}_{C}^{B'})_{u_i}^{v_{j+1}} &  (\mathbf{0}_C^{C'})_{u_i}^{v_j} 
\end{array}\right)
    $$ 
because that $\llbracket F\rrbracket $ and $G$ are morphisms. It remains to be seen that the block $L_{u_{i+1}}^{v_j} = 0$. In this sense, we will proceed with the following case analysis: 

\begin{itemize}
\item If $i>j$ the block $
 L^{v_j}_{u_{i+1}}$ is zero, because that $i+1>j+1$ (by item \textup{(iii)} of $\mathcal{K}$-matrix).
\item If $u>v$ and $i=j$, the block $L^{v_i}_{u_{i+1}}$ is zero, because that $u>v$ and $i+1=i+1$ (by item \textup{(iii)} of $\mathcal{K}$-matrix).
\end{itemize}

For equality $\Omega_TH=H\Omega_{T'}$ (item (b) of morphism) is consequence of $FT'-TG=BL+LC'$, $BF=FB'$ and $CG=GC'$, because
{\small{\begin{align*}
({\Omega_T} H)_{u_i}^{v_j}&=
\sum_{w_k\in \mathscr{Y}}\left(\begin{array}{cc}
    -B_{u_{i+1}}^{w_k} & T_{u_{i+1}}^{w_k}  \\
    (\mathbf{0}^{B}_{C})_{u_{i}}^{w_k} &  C_{u_{i}}^{w_k}
\end{array}\right)\left(\begin{array}{cc}
   F_{w_k}^{v_{j+1}}  & L_{w_k}^{v_j}   \\
    (\mathbf{0}_{C}^{B'})_{w_k}^{v_{j+1}} &  G_{w_k}^{v_j} 
\end{array}\right)= \left(\begin{array}{cc}
    -(BF)_{u_{i+1}}^{v_{j+1}} & (TG-BL)_{u_{i+1}}^{v_j} \\ 
    (\mathbf{0}^{B'}_{C})_{u_i}^{v_{j+1}} &  (CG)_{u_i}^{v_j}
\end{array}\right),\\
(H{\Omega_{T'}})_{u_i}^{v_j} &= 
\sum_{w_k\in \mathscr{Y}}\left(\begin{array}{cc}
   F^{w_k}_{u_{i+1}}  & L^{w_k}_{u_{i+1}}  \\ 
    (\mathbf{0}_{C}^{B'})^{w_k}_{u_i}  &  G^{w_k}_{u_i}  
\end{array}\right)\left(\begin{array}{cc}
    (-B')_{w_k}^{v_{j+1}} & (T')_{w_k}^{v_j}  \\ (\mathbf{0}^{B'}_{C'})_{w_k}^{v_{j+1}} &  (C')_{w_k}^{v_j}
\end{array}\right)= \left(\begin{array}{cc}
    -(FB')_{u_{i+1}}^{v_{j+1}} & (FT' + LB')_{u_{i+1}}^{v_j}   \\ 
  (\mathbf{0}^{B'}_{C})_{u_i}^{v_{j+1}}  &  (GC')_{u_i}^{v_j} 
\end{array}\right)
\end{align*}}} for all $u_i,v_j \in \mathscr{Y}$.

The commutativity of the following  diagram \eqref{diagram3.7}
 {\small{
  \begin{align*}
    ( G\iota_{C'})_{u_i}^{v_j} & =\sum_{w_k\in \mathscr{Y}} G_{u_i}^{w_k}\left(\begin{matrix}
    (\mathbf{0}^{B'}_{C'})_{w_k}^{v_{j+1}}  \ \  (\textup{Id}_{C'})_{w_k}^{v_j}
\end{matrix}\right)
=\sum_{w_k \in \mathscr{Y}}\left(\begin{matrix}
            (\mathbf{0}^{B}_{C } )_{u_i}^{w_k}  \ \  (\textup{Id}_{C})_{u_i}^{w_k}
\end{matrix}\right)\left(\begin{array}{cc}
   F_{w_k}^{v_{j+1}}  \ \  L_{w_k}^{v_j}  \\ 
    (\mathbf{0}_{C}^{B'})_{w_k}^{v_{j+1}} \ \   G_{w_k}^{v_j}
\end{array}\right)=( \iota_{C}H)_{u_i}^{v_j},\\
(\pi _{\llbracket B \rrbracket} \llbracket F \rrbracket)_{u_i}^{v_j} & = \sum_{w_k\in \mathscr{Y}}\left(\begin{matrix}
             (\textup{Id}_{B})_{u_{i+1}}^{w_k}\\ 
              (\mathbf{0}^{B}_{C})_{u_i}^{w_k}
\end{matrix}\right)F_{w_k}^{v_{j+1}} =\left(\begin{matrix}
       F_{u_{i+1}}^{v_{j+1}} \\
              (\mathbf{0}_{C}^{B'})^{v_{j+1}}_{u_i}
\end{matrix}\right) = \sum_{w_k \in \mathscr{Y}}\left(\begin{array}{cc}
   F^{w_k}_{u_{i+1}}  \  \ L^{w_k}_{u_{i+1}}   \\
    (\mathbf{0}_{C}^{B'})^{w_k}_{u_i}  \ \   G^{w_k}_{u_i}  
\end{array}\right)\left(\begin{matrix}
            (\textup{Id}_{B'})_{w_k}^{v_{j+1}} \\ 
             (\mathbf{0}^{B'}_{C'})_{w_k}^{v_{j+1}}
\end{matrix}\right)= (H\pi_{\llbracket B' \rrbracket})_{u_i}^{v_j}
\end{align*}}}
for all $u_i,v_j\in\mathscr{Y}$.

\end{proof}


In order to prove that $\mathcal{K}(\mathscr{Y},\Bbbk)$ is a triangulated category, it remains to show the octahedral axiom.


\begin{proposition}
\label{prop:Octae}
  \ For any $(X, Y, X', u, u', v')$, $(Y, Z, Z', v,w, w')$ and $(X, Z, Y', uv, 
   p,q)$ distinguished triangles, there exists a distinguished triangle 
   $(X',Y',Z',f,g,w'u')$  making the following diagram commutative in  $\mathcal{K}(\mathscr{Y},\Bbbk)$
    {\footnotesize{$$\xymatrix{
    X\ar[rr]^-{u}\ar[d]_-{\textup{Id}_X}   &&Y\ar[rr]^-{u'}\ar[d]_-{v}    &&X'\ar@{-->}[d]_-{f}\ar[rr]^<<<<<{v'} &    &\llbracket X \rrbracket\ar[d]^-{\textup{Id}_{\llbracket X \rrbracket}}\\
X\ar[rr]^-{uv}\ar[d]_-{u}  & &Z\ar[rr]^-{p}\ar[d]_{\textup{Id}_Z}    &&Y'{ \ar@{-->}[d]_-{g}}\ar[rr]^-{q}    & &\llbracket X \rrbracket\ar[d]^-{\llbracket u\rrbracket} \\   
Y\ar[rr]^-{v}  & &Z\ar[rr]^-{w}    &&Z'\ar[d]_-{w'u'}\ar[rr]^-{w'}    & &\llbracket Y \rrbracket\ar[lld]^-{\llbracket u'\rrbracket} \\ 
&& && \llbracket X' \rrbracket&&}
    $$}} 
\end{proposition}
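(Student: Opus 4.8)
The plan is to prove the octahedral axiom by reducing everything to standard triangles and then constructing the missing morphisms $f$ and $g$ explicitly as block matrices, just as was done in Propositions~\ref{prop:TR2} and~\ref{prop:TR3}. Since all three given distinguished triangles are isomorphic to standard triangles, I may assume $X,Y,Z$ are Bondarenko's matrices, $u=S\colon X\to Y$ and $v=T\colon Y\to Z$ are honest morphisms in $s(\mathscr{Y},\Bbbk)$ (after renaming $B=X$, $C=Y$, $D=Z$), and that the three cones are $X'=\Omega_S$, $Z'=\Omega_T$, and $Y'=\Omega_{ST}$, with the structural maps $u'=\iota_C$, $v'=\pi_{\llbracket X\rrbracket}$, $p=\iota_D$, $q=\pi_{\llbracket X\rrbracket}$, $w=\iota_D$ (appropriately indexed), and $w'=\pi_{\llbracket Y\rrbracket}$. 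This is exactly the setting of Lemma~\ref{tr6:lemma}, which hands me the three morphisms $F\in\textup{Hom}(\Omega_S,\Omega_{ST})$, $G\in\textup{Hom}(\Omega_{ST},\Omega_T)$, and $\Lambda\in\textup{Hom}(\Omega_T,\Omega_F)$ already verified to lie in $s(\mathscr{Y},\Bbbk)$.

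First I would set $f:=\overline{F}$ and $g:=\overline{G}$ as the dashed morphisms. The bulk of the argument is then to check that each of the four squares in the diagram commutes in $\mathcal{K}(\mathscr{Y},\Bbbk)$, i.e.\ up to a $\mathcal{K}$-matrix. The commutativities $u'f=vp$ (reading: $\iota_C F = T\,\iota_D$) and $pg=wq$ together with $f v'=\pi_{\llbracket X\rrbracket}$-type identities should follow from direct block computations against the explicit forms of $F$ and $G$; I expect most of these to hold \emph{on the nose} in $s(\mathscr{Y},\Bbbk)$, with at most one requiring a $\mathcal{K}$-matrix correction of the kind appearing in Proposition~\ref{prop:TR2}. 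The remaining squares, involving $q$, $w'$, and the shifted maps $\llbracket u\rrbracket$, $\llbracket u'\rrbracket$, reduce to the functoriality relations $\llbracket S\rrbracket$, $\llbracket \iota_C\rrbracket$ interacting with $F$ and $G$, which are again matrix identities.

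Second, I would verify that the fourth row $(X',Y',Z',f,g,w'u')$ is itself a distinguished triangle. By Remark~\ref{remark2} it suffices to exhibit an isomorphism in $\mathcal{K}(\mathscr{Y},\Bbbk)$ between this sextuple and the standard triangle $(\Omega_S,\Omega_{ST},\Omega_F,\overline{F},\overline{\iota}_{\Omega_{ST}},\overline{\pi}_{\llbracket\Omega_S\rrbracket})$ attached to $F$. Here is where $\Lambda$ enters: the morphism $\Lambda\in\textup{Hom}(\Omega_T,\Omega_F)$ from Lemma~\ref{tr6:lemma} is the designed candidate for the comparison isomorphism $\Omega_T=Z'\to\Omega_F$. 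Concretely, I would show $\overline{\Lambda}$ is an isomorphism in $\mathcal{K}(\mathscr{Y},\Bbbk)$ by producing an explicit inverse block matrix $\Lambda'$ and a $\mathcal{K}$-matrix witnessing $\overline{\Lambda}\,\overline{\Lambda'}=\textup{Id}$ and $\overline{\Lambda'}\,\overline{\Lambda}=\textup{Id}$, exactly as $R$ and $S$ played off against each other in Proposition~\ref{prop:TR2}, and then check that $\Lambda$ intertwines $(\overline{F},\overline{G},\overline{w'u'})$ with the standard structural maps of $\Omega_F$.

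\textbf{The hard part} will be the last step: pinning down the correct inverse to $\Lambda$ and the accompanying $\mathcal{K}$-matrices so that the comparison with the standard cone $\Omega_F$ is a genuine isomorphism of triangles, not merely a commuting ladder. The widths of the blocks in $\Omega_F$ (a cone of a cone-map) grow, so bookkeeping the index shifts $v_{j+2},v_{j+1},v_j$ appearing in $\Lambda$ against the partition of $\Omega_S$ and $\Omega_{ST}$ is delicate, and it is precisely at the off-diagonal, one-step-shifted entries that the weaker equivalence $\simeq$ (rather than $\equiv$) is needed—this is the place where the new relation earns its keep. I would organize this computation by the same $3\times3$ support table used after Lemma~\ref{lem:CT}, handling the diagonal identity blocks first and then absorbing the shifted $S$- and $T$-blocks into the $\mathcal{K}$-matrix via condition~(iii).
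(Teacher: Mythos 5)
Your proposal matches the paper's proof essentially step for step: reduce to standard triangles, take $f=\overline{F}$ and $g=\overline{G}$ from Lemma~\ref{tr6:lemma}, verify the squares by direct block computation (commuting on the nose except for the single $\mathcal{K}$-matrix correction you anticipate, which is indeed where the weaker relation $\simeq$ is used), and exhibit the fourth row as isomorphic to the standard triangle on $F$ via $\Lambda\colon\Omega_T\longrightarrow\Omega_F$. The only divergence is at the very end: where you would produce an explicit two-sided inverse to $\Lambda$ together with witnessing $\mathcal{K}$-matrices, the paper instead concludes that $\Lambda$ is an isomorphism by appealing to the five lemma for pretriangulated categories (the other two vertical maps being identities), so your version does somewhat more explicit work at that point but is otherwise the same argument.
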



\begin{proof}

Again, it suffices to prove the Octahedral axiom for standard triangles. First of all, let us show that the following diagram commutes in $\mathcal{K}(\mathscr{Y},\Bbbk)$

  {\footnotesize{$$\xymatrix{B\ar[rr]^{S}\ar[d]_{\textup{Id}_B}   &&C\ar[rr]^{\iota_C}\ar[d]_{T}    &&{\Omega_{S}} \ar@{-->}[d]_{\exists F}\ar[rr]^-{\pi _{\llbracket B \rrbracket}} &    & \llbracket B \rrbracket\ar[d]^{\textup{Id}_{\llbracket B \rrbracket}}\\
B\ar[rr]^{ST}\ar[d]_{S}  & &D\ar[rr]^-{\overline{\iota_D}}\ar[d]_-{\textup{Id}_D}    &&{\Omega_{ST}} \ar@{-->}[d]_{\exists G}\ar[rr]^-{\overline{\pi _{\llbracket B \rrbracket}}}    & & \llbracket B \rrbracket\ar[d]^{\llbracket S \rrbracket} \\   
C\ar[rr]_{T}  & &D\ar[rr]_-{{\iota_D}}    &&{\Omega_T} \ar[rr]_-{\pi_{\llbracket C \rrbracket}}    & &\llbracket C \rrbracket
}$$}}where, to avoid confusion,  $\overline{\iota_D}$ and  $\overline{\pi _{\llbracket B \rrbracket}}$ denote the morphism $\iota_D:D\longrightarrow {\Omega_{ST}}$ and  $\pi _{\llbracket B \rrbracket}:{\Omega_{ST}}\longrightarrow \llbracket B \rrbracket$ defined in Lemma~\ref{lem:CT} and, $F\in\textup{Hom}_{s(\mathscr{Y},\Bbbk)}({\Omega_S}, {\Omega_{ST}})$ and $G\in\textup{Hom}_{s(\mathscr{Y},\Bbbk)}({\Omega_{ST}}, {\Omega_{T}})$ are the morphisms given in Lemma \ref{tr6:lemma}, specifically, 
     {\small{  $$
F = \left(\begin{array}{cc}
   (\textup{Id}_{B})_{u_{i+1}}^{v_{j+1}} & (\mathbf{0}_{B}^D)_{u_{i+1}}^{v_j}  \\ 
    (\mathbf{0}_C^{B})_{u_i}^{v_{j+1}} & T_{u_i}^{v_j}  
\end{array}\right)_{{u_i},{v_j} \in \mathscr{Y}}
\ \ \ \ \ \text{and }\ \ \ \ \ G = \left(\begin{array}{cc}
   S_{u_{i+1}}^{v_{j+1}}  & (\mathbf{0}_B^D)_{u_{i+1}}^{v_j}  \\ 
    (\mathbf{0}_D^{C})_{u_i}^{v_{j+1}} & (\textup{Id}_D)_{u_i}^{v_j}  
\end{array}\right)_{{u_i},{v_j} \in \mathscr{Y}}.$$}} 

The commutativity is straightforward by multiplication of matrices, for instance,
{\small{\begin{align*}
    (\iota_C F)_{u_i}^{v_j} &=\sum_{w_k\in \mathscr{Y}}\left(\begin{array}{cc}
   (\mathbf{0}_C^{B})_{u_i}^{w_k} & (\text{Id}_{C})_{u_i}^{w_k}  \\
\end{array}\right)\left(\begin{array}{cc}
   (\textup{Id}_{B})_{w_k}^{v_{j+1}} & (\mathbf{0}_B^D)_{w_k}^{v_j}  \\ 
    (\mathbf{0}_C^{B})_{w_k}^{v_{j+1}} & T_{w_k}^{v_j}  
\end{array}\right)=\sum_{w_k\in \mathscr{Y}}T_{u_i}^{w_k}\left(\begin{array}{cc}
   (\mathbf{0}_D^{B})_{w_k}^{v_{j+1}} & (\textup{Id}_D)_{w_k}^{v_j}
\end{array}\right)= (T\overline{\iota_{D}})_{u_i}^{v_j},\\
    (\overline{\iota_D}G)_{u_i}^{v_j} &=\sum_{w_k\in \mathscr{Y}}\left(\begin{array}{cc}
   (\mathbf{0}_D^{B})_{u_i}^{w_k} & (\text{Id}_{D})_{u_i}^{w_k}  \\
\end{array}\right)\left(\begin{array}{cc}
   S_{w_k}^{v_{j+1}}  & (\mathbf{0}_B^D)_{w_k}^{v_j}  \\ 
    (\mathbf{0}_D^{C})_{w_k}^{v_{j+1}} & (\textup{Id}_D)_{w_k}^{v_j}  
\end{array}\right)= \left(\begin{array}{cc}
   (\mathbf{0}_D^{C})_{u_i}^{v_{j+1}} & (\text{Id}_{D})_{u_i}^{v_j}  \\
\end{array}\right)=({\iota_{D}})_{u_i}^{v_j}
\end{align*}}}for all $u_i,v_j \in \mathscr{Y}$.
 
By Lemma \ref{tr6:lemma} we have that
	$$
    \Lambda=\left(\begin{array}{ccccc}
     (\mathbf{0}_{C}^{B})_{u_{i+1}}^{v_{j+2}}& (\textup{Id}_{C})_{u_{i+1}}^{v_{j+1}} &
     (\mathbf{0}_{C}^{B})_{u_{i+1}}^{v_{j+1}}& (\mathbf{0}_{C}^{D})_{u_{i+1}}^{v_j}\\ 
     (\mathbf{0}_{D}^{B})_{u_i}^{v_{j+2}}& (\mathbf{0}_D^{C})_{u_i}^{v_{j+1}} &
     (\mathbf{0}_{D}^{B})_{u_i}^{v_{j+1}} & (\textup{Id}_D)_{u_i}^{v_j} 
\end{array}\right)_{{u_i},{v_j} \in \mathscr{Y}}\in\textup{Hom}_{s(\mathscr{Y},\Bbbk)}({\Omega_T},{\Omega_F}).
    $$
    Let us show that $\Lambda$ is an isomorphism in $\mathcal{K}(\mathscr{Y},\Bbbk)$ such that the following diagram commutes in $\mathcal{K}(\mathscr{Y},\Bbbk)$
{\footnotesize{$$\xymatrix@C=15pt{	
	{{\Omega_S}}\ar[rrr]^-{F}\ar[d]_-{\textup{Id}_{ {\Omega_S}}}	&&&	{ {\Omega_{ST}}}\ar[rrr]^-{ G}\ar[d]_-{\textup{Id}_{{\Omega_{ST}}} }		&&&	
 {{\Omega_{T}} }\ar[rrr]^-{ \pi_{\llbracket C \rrbracket}\iota_{\llbracket C \rrbracket}}\ar@{-->}[d]_-{\exists \Lambda}	&	&&{{\llbracket\Omega_{ S }\rrbracket} }\ar[d]^-{\textup{Id}_{\llbracket{\Omega_{ S }}\rrbracket}}		\\
	 {{\Omega_S}}\ar[rrr]_{ F}&&&{{\Omega_{ST} }}\ar[rrr]_-{ \iota_{{\Omega_{ST}}}} &&&
 {{\Omega_{F}} }\ar[rrr]_-{\pi_{{{{\llbracket\Omega_{ S }\rrbracket} }}} }	&&&{{\llbracket\Omega_{ S }\rrbracket} } }$$}}

The commutativity is straightforward by multiplication of matrices, for instance 
{\small{\begin{align*}
(&\iota_{{\Omega_{ST}} }-G\Lambda)_{u_i}^{v_j} \\
&=\left(\begin{array}{ccccc}
     (\mathbf{0}_{B}^{B})_{u_{i+1}}^{v_{j+2}}& (\mathbf{0}_B^C)_{u_{i+1}}^{v_{j+1}} &
     (\textup{Id}_B)_{u_{i+1}}^{v_{j+1}}& (\mathbf{0}_{B}^{D})_{u_{i+1}}^{v_j}\\ 
     (\mathbf{0}_{D}^{B})_{u_i}^{v_{j+2}}& (\mathbf{0}_D^{C})_{u_i}^{v_{j+1}} &
     (\mathbf{0}_{D}^{B})_{u_i}^{v_{j+1}} & (\textup{Id}_D)_{u_i}^{v_j} 
\end{array}\right)\!\!  -\!\!\left(\begin{array}{ccccc}
     (\mathbf{0}_{B}^{B})_{u_{i+1}}^{v_{j+2}}& S_{u_{i+1}}^{v_{j+1}} &
     (\mathbf{0}_B^B)_{u_{i+1}}^{v_{j+1}}& (\mathbf{0}_B^D)_{u_{i+1}}^{v_j}\\ 
     (\mathbf{0}_{D}^{B})_{u_i}^{v_{j+2}}& (\mathbf{0}_D^{C})_{u_i}^{v_{j+1}} &
     (\mathbf{0}_{D}^{B})_{u_i}^{v_{j+1}} & (\textup{Id}_D)_{u_i}^{v_j} 
\end{array}\right)\\
&= \left(\begin{array}{ccccc}
     (\mathbf{0}_{B}^{B})_{u_{i+1}}^{v_{j+2}}& -S_{u_{i+1}}^{v_{j+1}} &
     (\textup{Id}_B)_{u_{i+1}}^{v_{j+1}}& (\mathbf{0}_B^D)_{u_{i+1}}^{v_j}\\ 
     (\mathbf{0}_{D}^{B})_{u_i}^{v_{j+2}}& (\mathbf{0}_D^{C})_{u_i}^{v_{j+1}} &
     (\mathbf{0}_{D}^{B})_{u_i}^{v_{j+1}} & (\mathbf{0}_D^D)_{u_i}^{v_j} 
\end{array}\right)\\
&=\sum_{w_k \in \mathscr{Y}}\!\!\left(\begin{array}{cc}
   -B_{u_{i+1}}^{w_k}   & (ST)_{u_{i+1}}^{w_k}   \\ 
    (\mathbf{0}_D^{B})_{u_i}^{w_k} & D_{u_i}^{w_k}   
\end{array}\right)\left(\begin{array}{ccccc}
     (\textup{Id}_B)_{w_k}^{v_{j+2}}& (\mathbf{0}_B^C)_{w_k}^{v_{j+1}} &
     (\mathbf{0}_B^B)_{w_k}^{v_{j+1}}& (\mathbf{0}_{B}^{D})_{w_k}^{v_j}\\ 
     (\mathbf{0}_{D}^{B})_{w_k}^{v_{j+2}}& (\mathbf{0}_D^{C})_{w_k}^{v_{j+1}} &
     (\mathbf{0}_{D}^{B})_{w_k}^{v_{j+1}} & (\mathbf{0}_D^D)_{w_k}^{v_j} 
\end{array}\right)\\
&+\sum_{w_k \in \mathscr{Y}}\!\!\left(\begin{array}{ccccc}
     (\textup{Id}_B)_{u_{i+1}}^{w_k}& (\mathbf{0}_B^C)_{u_{i+1}}^{w_k} &
     (\mathbf{0}_B^B)_{u_{i+1}}^{w_k}& (\mathbf{0}_{B}^{D})_{u_{i+1}}^{w_k}\\ 
     (\mathbf{0}_{D}^{B})_{u_i}^{w_{k}}& (\mathbf{0}_D^{C})_{u_i}^{w_{k}}&
     (\mathbf{0}_{D}^{B})_{u_i}^{w_{k}} & (\mathbf{0}_D^D)_{u_i}^{w_{k}} 
\end{array}\right) \!\! \left(\begin{array}{cccc}
        B_{w_k}^{v_{j+2}}  & -S_{w_k}^{v_{j+1}}& (\textup{Id}_{B})_{w_k}^{v_{j+1}} & (\mathbf{0}_B^D)_{w_k}^{v_j}  \\ 
          (\mathbf{0}_{C}^{B})_{w_k}^{v_{j+2}}  & -C_{w_k}^{v_{j+1}} & (\mathbf{0}_{C}^{B})_{w_k}^{v_{j+1}} & T_{w_k}^{v_j} \\ 
             (\mathbf{0}_{B}^{B})_{w_k}^{v_{j+2}} &(\mathbf{0}_{B}^{C})_{w_k}^{v_{j+1}} & -B_{w_k}^{v_{j+1}}& (ST)_{w_k}^{v_j}  \\ 
                 (\mathbf{0}_D^{B})_{w_k}^{v_{j+2}}& (\mathbf{0}_D^{C})_{w_k}^{v_{j+1}}&(\mathbf{0}_D^{B})_{w_k}^{v_{j+1}} &D_{w_k}^{v_j}  \\
     \end{array}\right)\\
     &=( {\Omega_{ST}}L +L{\Omega_{F}})_{u_i}^{v_j}
\end{align*}}}
for all $u_i,v_j \in \mathscr{Y}$, where 
    \begin{align*}
    L=\left(\begin{array}{ccccc}
     (\textup{Id}_B)_{u_{i+1}}^{v_{j+2}}& (\mathbf{0}_B^C)_{u_{i+1}}^{v_{j+1}} &
     (\mathbf{0}_B^B)_{u_{i+1}}^{v_{j+1}}& (\mathbf{0}_{B}^{D})_{u_{i+1}}^{v_j}\\ 
     (\mathbf{0}_{D}^{B})_{u_i}^{v_{j+2}}& (\mathbf{0}_D^{C})_{u_i}^{v_{j+1}} &
     (\mathbf{0}_{D}^{B})_{u_i}^{v_{j+1}} & (\mathbf{0}_D^D)_{u_i}^{v_j} 
\end{array}\right)_{u_i,v_j \in \mathscr{Y}}
\end{align*}
is a $\mathcal{K}$-matrix. Moreover,
{\small{\begin{align*}
    (\Lambda\pi_{\llbracket\Omega_{S}\rrbracket})^{v_j}_{u_i}&= \sum_{w_k\in \mathscr{Y}}\left(\begin{array}{ccccc}
     (\mathbf{0}_{C}^{B})_{u_{i+1}}^{w_k} & (\textup{Id}_{C})_{u_{i+1}}^{w_k} &
     (\mathbf{0}_{C}^{B})_{u_{i+1}}^{w_k} &(\mathbf{0}_{C}^{D})_{u_{i+1}}^{w_k} \\ 
     (\mathbf{0}_{D}^{B})_{u_i}^{w_k} & (\mathbf{0}_D^{C})_{u_i}^{w_k} &
     (\mathbf{0}_{D}^{B})_{u_i}^{w_k} & (\textup{Id}_D)_{u_i}^{w_k} 
\end{array}\right)\left(\begin{array}{cc}
    (\textup{Id}_{B})_{w_k}^{v_{j+2}} & (\mathbf{0}_{B}^{C})_{w_k}^{v_{j+1}}  \\
    (\mathbf{0}^{B}_{C})_{w_k}^{v_{j+2}} & (\textup{Id}_{C})_{w_k}^{v_{j+1}}\\
      (\mathbf{0}_{B}^{B})_{w_k}^{v_{j+2}} & (\mathbf{0}_{B}^{C})_{w_k}^{v_{j+1}}  \\
    (\mathbf{0}^{B}_D)_{w_k}^{v_{j+2}} & (\mathbf{0}_D^{C})_{w_k}^{v_{j+1}}\\
\end{array}\right)\\
&= \left(\begin{array}{cc}
    (\mathbf{0}_{C}^{B})_{u_{i+1}}^{v_{j+2}} & (\textup{Id}_{C})_{u_{i+1}}^{v_{j+1}}  \\
    (\mathbf{0}^{B}_D)_{u_{i}}^{v_{j+2}} & (\mathbf{0}_{D}^{C})_{u_{i}}^{v_{j+1}}\\
    \end{array}\right)= \sum_{w_k\in \mathscr{Y}}\left(\begin{array}{cc}
   (\textup{Id}_{C})_{u_{i+1}}^{w_k} \\
    (\mathbf{0}^{C}_D)_{u_{i+1}}^{w_k} 
\end{array}\right)\left(\begin{array}{cc}
  (\mathbf{0}^{B}_C)^{v_{j+2}}_{w_k} & (\textup{Id}_{C})_{w_k}^{v_{j+1}}  
    \end{array}\right)\\
    & = (\pi_{\llbracket C \rrbracket} \iota_{\llbracket C \rrbracket})_{u_i}^{v_j} \\
\end{align*}}}for all $u_i,v_j \in \mathscr{Y}$. Therefore, the proof is done, since $\textup{Id}_{{\Omega_S}}$ and $\textup{Id}_{{\Omega_{ST}}}$ are isomorphism in $\mathcal{K}(\mathscr{Y},\Bbbk)$ and five lemma for pretriangulated category.

\end{proof}


From Remark~\ref{remark2}, Corollary\ref{zerok} and propositions~\ref{prop:TR2}, \ref{prop:TR3} and \ref{prop:Octae} we can conclude the main result of this section.


\begin{theorem}\label{main 1}
    The category $\mathcal{K}(\mathscr{Y},\Bbbk)$ with the autofunctor $\llbracket - \rrbracket:\mathcal{K}(\mathscr{Y},\Bbbk)\longrightarrow \mathcal{K}(\mathscr{Y},\Bbbk)$ and the family of distinguished  triangles {$\mathcal{D}$},  is a triangulated category.
\end{theorem}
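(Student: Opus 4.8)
The plan is to confirm that the triple $(\mathcal{K}(\mathscr{Y},\Bbbk),\llbracket-\rrbracket,\mathcal{D})$ satisfies each axiom of a triangulated category, reading every axiom off the material already assembled in this section. Two preliminaries must be settled first. I would check that $\mathcal{K}(\mathscr{Y},\Bbbk)$ is additive: the category $s(\mathscr{Y},\Bbbk)$ admits biproducts, obtained by placing, band by band over $\mathscr{Y}$, the block direct sum of two Bondarenko matrices (which again satisfies conditions (i)--(iii) of the definition of an object), with $\mathbb{O}$ as zero object; since $\Sigma$ is a two-sided ideal by Lemma \ref{ideal} and each $\Sigma(B,C)$ is a $\Bbbk$-subspace, these biproducts descend to the quotient. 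I would also verify that $\llbracket-\rrbracket$ is an autoequivalence and not merely an endofunctor: the downward shift $B\mapsto(-B_{[u,i-1]}^{[v,j-1]})$ is a strict two-sided inverse that manifestly preserves $\simeq$, and hence induces a quasi-inverse on $\mathcal{K}(\mathscr{Y},\Bbbk)$.

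With these in hand, the axioms follow by citation. Axiom (TR1) splits into three parts: the triangle $(B,B,\mathbb{O},\textup{Id}_B,\mathbf{0},\mathbf{0})$ is distinguished by Corollary \ref{zerok}; every morphism $u=\overline{T}$ embeds in the standard distinguished triangle $(B,C,\Omega_T,\overline{T},\overline{\iota}_C,\overline{\pi}_{\llbracket B\rrbracket})$ by Remark \ref{remark2}; and $\mathcal{D}$ is closed under isomorphism by its very definition (again Remark \ref{remark2}). Axiom (TR3), the completion of a commutative square of triangles to a morphism of triangles, is Proposition \ref{prop:TR3}, and the octahedral axiom (TR4) is Proposition \ref{prop:Octae}. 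Together with the rotation supplied by Proposition \ref{prop:TR2}, these give precisely the list of requirements in the definition of a triangulated category found in the references cited after the statement of the distinguished triangles.

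The one step that is not an immediate citation is the rotation axiom (TR2), since Proposition \ref{prop:TR2} establishes only the forward rotation $(X,Y,Z,u,v,w)\mapsto(Y,Z,\llbracket X\rrbracket,v,w,-\llbracket u\rrbracket)$. If the adopted definition also demands the converse, I would obtain it from the forward rotation by the standard device: applying the forward rotation three times carries $(X,Y,Z,u,v,w)$ to $(\llbracket X\rrbracket,\llbracket Y\rrbracket,\llbracket Z\rrbracket,-\llbracket u\rrbracket,-\llbracket v\rrbracket,-\llbracket w\rrbracket)$, so that, up to the invertible functor $\llbracket-\rrbracket$ and a sign change, rotation is an invertible operation on the class of candidate triangles. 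Combining this with the invertibility of $\llbracket-\rrbracket$ and with the five-lemma available in the pretriangulated setting (already used in Proposition \ref{prop:Octae}) shows that a triangle is distinguished as soon as one of its rotates is.

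There is essentially no new computation in the theorem itself: all the genuine content is carried by Propositions \ref{prop:TR2}, \ref{prop:TR3}, \ref{prop:Octae} and by Remark \ref{remark2}. Consequently the main obstacle is organisational rather than technical, chiefly the careful matching of sign conventions in the converse-rotation argument with the sign $-\llbracket u\rrbracket$ fixed in Proposition \ref{prop:TR2}, together with the verification that the band-wise biproduct and the downward shift genuinely endow $\mathcal{K}(\mathscr{Y},\Bbbk)$ with the additive, suspended structure that must be in place before the octahedral axiom can be invoked.
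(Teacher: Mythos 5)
Your proposal is correct and follows essentially the same route as the paper, whose proof of Theorem \ref{main 1} is simply the assembly of Remark \ref{remark2}, Corollary \ref{zerok}, and Propositions \ref{prop:TR2}, \ref{prop:TR3} and \ref{prop:Octae}. You are in fact somewhat more careful than the paper, which silently omits the additivity of $\mathcal{K}(\mathscr{Y},\Bbbk)$, the invertibility of $\llbracket-\rrbracket$, and the converse rotation; your sketches of these points are sound.
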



\section{Relationship with the homotopy category of gentle algebras}
\label{sec:DG}

In this section, we aim to construct and exhibit a triangulated functor from the homotopy category of projective modules over gentle algebras to a particular quotient of a Bondarenko's category. This functor will facilitate a deeper understanding of the relationships between these categories and their structural properties. To this end, we review some of the standard facts on representations theory of associative algebras, see \cite{Rin84,Sch14} for more details. 

In this section we consider certain finite dimensional quotients of path algebras. So, let $Q$ denote a finite quiver with set of vertices $Q_0$ and set of arrows $Q_1$. Suppose that $\Bbbk Q$ is the corresponding path algebra over the algebraically closed field $\Bbbk$ and let $I$ be an ideal of $\Bbbk Q$ such that $J^n\subseteq I \subseteq J^2$ for some integer $n\geq 2$ (i.e. $I$ is \emph{admissible}), where $J$ is the two-sided ideal generated by the arrows. Throughout this section, $A$ will denote an algebra of the form $\Bbbk Q / I=\Bbbk(Q,I)$ and $A$-mod the category of finitely generated left $A$-modules. 

We will denote  by $e_i$ the trivial path (of length 0) at vertex $i\in Q_0$ and by $P_i = Ae_i$ the corresponding indecomposable projective $A$-module. Let us denote by $\mathbf{Pa}$ the set of all paths of $A$, that is, all paths of $Q$ that are outside $I$, while $\mathbf{Pa}_{\geq l}$ will denote the subset of $\mathbf{Pa}$ of all paths of length greater than  or equal to a fixed non-negative integer $l$. Since, each element of $A$ is uniquely represented by a linear combination of paths in $\mathbf{Pa}$, we can assume that $\mathbf{Pa}$ forms a basis for $A$. And, if $w$ is a path, $s(w)$ denotes its \emph{source}, $t(w)$ denotes its \emph{target}, and $l(w)$ denotes its length.

Another special subset in $\mathbf{Pa}$, is the set called \emph{maximal paths} and denoted by $\mathbf{M}$, where a path $w$ in $A$ is considered \emph{maximal} if, for all arrows $a,b \in Q_1$, we have that $aw$ and $wb$ are zero in $A$. Furthermore, a nontrivial path $w$ in $Q$ belongs to 
$\mathbf{Pa}$ if and only if it is a sub-path of a maximal path, denoted by $\widetilde{w}$, that is not in $I$ (that is, an element of $\mathbf{M}$). This maximal path has the form $\widetilde{w} = \widehat{w}w\overline{w}$, where $\widehat{w}$ and $\overline{w} \in \mathbf{Pa}$.


\vspace{0,2cm}

For now on, we will assume that $A$ is a \emph{gentle algebra} (see \cite{AS87} for more details), i.e., an algebra $A=\Bbbk(Q,I)$ satisfying the following conditions:
\begin{enumerate}[(i)]
\item[\textup{(i)}] 
Each vertex in $Q$ is the source of at most two arrows and the target of at most two arrows.

\item[\textup{(ii)}] For any arrow $\alpha\in Q_1$ there is at most one arrow $\beta\in Q_1$ (respectively, $\gamma\in Q_1$) such that $\alpha\beta\not\in I$ (respectively, $\gamma\alpha\not\in I$).

\item[\textup{(iii)}] For any arrow $\alpha\in Q_1$ there is at most one arrow $\beta\in Q_1$ (respectively, $\gamma\in Q_1$) such that $\alpha\beta\in I$ (respectively, $\gamma\alpha\in I$).

\item[\textup{(iv)}] The ideal $I$ is generated by paths of length  two.
\end{enumerate}


\begin{example}
\label{ex:gentle}

We will illustrate with examples the constructions in this section with the following gentle algebras     
\begin{enumerate}[(i)]
\item 
$A_1=\Bbbk(Q,I)$ given by the quiver
	$
	\xymatrix{Q:\ \ & 1\ar@(ul,dl)_{x} \ar[r]^{a}  &2\ar@(ru,rd)[]^{y}}$ 
with the relations $I=\langle x^2 , y^2 \rangle$.

\item $A_2=\Bbbk Q$ given by the quiver
	$
	\xymatrix{
	Q:\ \ 1\ar@<2pt>[r]^{a}\ar@<-2pt>[r]_{b} & 2
	}
	$

\end{enumerate}

\end{example}


\vspace{0,2cm}

We denote by $\mathbf{D}(A)$ (respectively, $\mathbf{D}^b(A)$) the derived category of $A$-mod (respectively, the derived category of bounded complexes of $A$-mod), and by $\mathbf{C}^b(\textup{proj} A)$ the category of bounded complexes of projectives in $A\textup{-mod}$. Similarly, $\mathbf{K}^b(\textup{proj} A)$ denotes the corresponding homotopy category to $\mathbf{C}^b(\textup{proj} A)$.


\subsection{The functor}

In this subsection we will study a functor from category $\mathbf{K}^b(\textup{proj}A)$ to category $\mathcal{K}(\mathscr{Y}(A),\Bbbk)$, where $\mathscr{Y}(A)$ is the poset with involution (introduced in \cite[Section 3]{BM03}) defined of the following way: for each $m\in \mathbf{M}$ we define the poset $$\mathcal{Y}_m=\left\{e_{s(m)}<u_1<u_1u_2<\cdots < u_1u_2\cdots u_n\right\},$$ where $m=u_1u_2\cdots u_n$ and each $u_i\in Q_1$ is an arrow; note that $\mathcal{Y}_m$ is ordered by its length. Now, we assume on $\mathbf{M}$ a fixed linear order and we consider the disjoint union $\mathcal{Y}:=\bigcup_{m \in \mathbf{M}} \mathcal{Y}_m$. The involution $\sigma$ on $\mathcal{Y}$ is defined of the following way, $\sigma(u)=v$ if and only if $t(u)=t(v)$. Since there are no more than two paths
$u$, $v$ such that $t(u)=t(v)$ (see \cite[Proposition 2]{BM03}), in the case there is only one, $u$, we write that
$\sigma(u)=u$ and, when there are two, $u$, $v$, we let $\sigma$ interchange them. Similarly to Section \ref{sec:Bond}, the set
 	$$
    \mathscr{Y}(A):=\mathcal{Y} \times \mathbb{Z}=\left(\bigcup_{m \in \mathbf{M}} \mathcal{Y}_m\right) \times \mathbb{Z},
    $$ 
    is a poset ordered anti-lexicographically, that is
\begin{center}
    $[u,i]<[v,j]$ if and only if $i<j$ or ($i=j$ and $\widetilde{u}<\widetilde{v}$) or ($i=j$, $\widetilde{u}=\widetilde{v}$ and $l(u)<l(v)$),
\end{center}
with involution $\sigma$ on $\mathscr{Y}(A)$ is given by
 \begin{center}
 $\sigma([u,i])=[v,j]$ if and only if $i=j$ and $t(u)=t(v)$.    
 \end{center} 

  It should be noticed that it is possible that a trivial path $e_r$ ($r\in Q_0$) belongs to two different maximal paths. If this happens, the two occurrences of $e_r$ must be regarded as different. The example below illustrates this.
  

\begin{example}
\label{ex:Y(A)}
Let us consider the gentle algebras from Example~\ref{ex:gentle}.

$$
{\small \begin{array}{|c||c|c|c|c|}
\hline
 & \mathbf{Pa} & \mathbf{M} & \textup{Poset} & \textup{Involution} \\
             \hline\hline
A_1 & \left\{e_1,x,a,y,xa,ay,xay\right\} & \left\{xay\right\} & \left\{e_1<x<xa<xay\right\}\times \mathbb{Z}
 & \begin{cases}
        \sigma([e_1,j])=[x,j] \\
        \sigma([xa,j])=[xay,j]
     \end{cases}\\
 &  &  &  & \\
  A_2 & \left\{e_1,e_2,a,b\right\} &\left\{a,b\right\} & \left\{e_{s(a)}<a<e_{s(b)}<b\right\}\times \mathbb{Z} & \begin{cases}
        \sigma([e_{s(a)},j])=[e_{s(b)},j] \\
        \sigma([a,j])=[b,j]
     \end{cases} \\
\hline

\end{array}}
$$

\end{example}

 
\vspace{,3cm}

Let us define a functor between $\Bbbk$-categories $\mathbf{F}:\mathbf{C}^b(\textup{proj} A) \longrightarrow s(\mathscr{Y}(A),\Bbbk)$, which is an adapted version of the functor constructed in \cite[Section 3]{BM03}.

\vspace{0,5cm}

We start with a bounded complex $\textbf{P}^\bullet \in \mathbf{C}^b(\textup{proj} A)$ of length $m$, that is, a complex $\textbf{P}^\bullet$ of the form
    $$
    \xymatrix@C=10,6pt{\cdots\ar[rr] & & 0\ar[rr]   && \textbf{P}^n\ar[rr]^-{\partial^n}&& \textbf{P}^{n+1}\ar[rr]^-{\partial^{n+1}}&    & \cdots\cdots\ar[rr] &  & \textbf{P}^{n+m-1} \ \ar[rr]^-{\partial^{n+m-1}}   & & \textbf{P}^{n+m}\ar[rr] &  & 0\ar[rr] & & \cdots  }
    $$
with $n,m \in \mathbb{Z}$.

Since each projective $\textbf{P}^j$ is the finite direct sum of indecomposable projective, we can write the complex $\textbf{P}^\bullet$ as
    $$
    \xymatrix@C=10,2pt{
    \cdots\ar[rr] & & 0\ar[rr]   &&  \bigoplus\limits_{i=1}^t P_i^{d_{i, n}}\ar[rr]^-{\partial^n}&& \cdots\cdots\ar[rr] &  &\bigoplus\limits_{i=1}^t P_i^{d_{i, n+m-1}} \ar[rr]^-{\partial^{n+m-1}}   & & \bigoplus\limits_{i=1}^t P_i^{d_{i, n+m}}\ar[rr] &  & 0\ar[rr] & & \cdots  }
    $$
where $d_{i,j}$ denote the number of times that appears the projective $P_i$ in the place $j$. Moreover, each differential $\partial^j:\textbf{P}^j\longrightarrow\textbf{P}^{j+1}$ in the complex $\textbf{P}^\bullet$ is given by a block matrix of size $\sum\limits_{i\in Q_0} d_{i,j}\times \sum\limits_{{i\in Q_0}}d_{i,j+1}$, where each block matrix corresponds to a morphism $ P^{d_{r,j}}_r\longrightarrow P^{d_{s,j+1}}_s$.

As it is well known, the vector space $\textup{Hom}(P_r,P_s)$ has a basis consisting of all homomorphism $p(w):P_r\longrightarrow P_s$ (defined by $u\longmapsto v=uw$) with $w \in \mathbf{Pa}$, where $s(w)=r$ and $t(w)=s$. It follows that any morphism from $P_r$ to $P_s$ is associated to a linear combination of these $p(w)$. Here, we are also considering trivial paths.

Similarly to \cite[Section 3]{BM03} the complex $\textbf{P}^\bullet$ is represented by a block matrix, which is determined by the sequence of morphisms $\partial^j$, $j=n,n+1,\dots,n+m-1$ (and vice versa), where  each $\partial^j$ is given by a block matrix $\mathbf{A}= \left(\mathbf{A}^{s,j+1}_{r,j}\right)$, which depends on the ``multiplicities'' of the morphisms $p(w)$ in $\partial^j$. Precisely, each $\partial^j$ is represented by a formal sum: 
 \begin{align}\label{form:diff}
 \partial^j:\sum_{w \in \mathbf{Pa}}p(w)\mathbf{A}_{w,j},
\end{align}where $\mathbf{A}_{w,j}$ denotes the block that expresses the ``multiplicity'' of the morphism $p(w)$ at $\partial^j$. Allow us to explain this as follows. Fixed the place $j$ of the complex $\textbf{P}^\bullet$, the component of $\partial^j$ going from $P^{d_{r,j}}_r$ to $P^{d_{s,j+1}}_s$ is represented by a matrix (a block)
    $$
    \mathbf{A}^{s,j+1}_{r,j} \in \textup{Mat}\left(d_{r,j}\times d_{s,j+1};\Bbbk(\langle p(w_1),\dots,p(w_l)\rangle)\right),
    $$
where the $w_i$'s are parallel paths of $A$ from $r$ to $s$ and $ \Bbbk(\langle p(w_1),\dots , p(w_l)\rangle)$ is the $\Bbbk$-vector space with basis $\{p(w_1),\dots , p(w_l)\}$. It is then clear that $\mathbf{A}^{s,j+1}_{r,j}$ can be written uniquely as
    $$ \mathbf{A}^{s,j+1}_{r,j}=\sum_{i=1}^l p(w_i)\mathbf{A}_{w_i,j},
    $$
with $\mathbf{A}_{w_i,j} \in \textup{Mat}\left(d_{r,j}\times d_{s,j+1};\Bbbk\right)$. It is important to note that our convention is that, in the matrix representation of $\partial^j: \mathbf{P}^j\longrightarrow \mathbf{P}^{j+1}$, the indecomposable projectives summands in $\mathbf{P}^j$ correspond to rows, while those in $\mathbf{P}^{j+1}$ correspond to columns. 

Therefore, for each complex $\textbf{P}^\bullet \in \mathbf{C}^b(\textup{proj} A)$ we define the $({[u,j]},{[v,j+1]} )$th block by
    $$ 
    \mathbf{F}(\mathbf{P}^{\bullet} )^{[v,j+1]}_{[u,j]}:=\mathbf{A}_{w,j}
    $$
for all $j\in\mathbb{Z}$ and for all pairs $u,v\in\mathcal{Y}$ and such that $\widetilde{w}=v\overline{w}$ and $v=uw$ for some path $w$ in $A$. Note that, \cite[Proposition 2]{BM03} guaranties the uniqueness of $w$ when $u\neq v$; in the case $u=v$, we will consider the trivial path $w=e_{t(u)}$.

\vspace{0,5cm}

Now, let us consider a morphism $\varphi^{\bullet} \in \textup{Hom}_{\mathbf{C}^b(\textup{proj} A)}\big(\textbf{P}^\bullet, \widetilde{{\bf P}}^\bullet\big)$. To represent $\varphi^{\bullet}$ as a matrix, at each place $j \in \mathbb{Z}$, the morphism $\varphi^{\bullet}$ is a homomorphism $\varphi^{j}$ from the projective $\textbf{P}^j$ to the projective $\widetilde{{\bf P}}^j$, and as above,  $\varphi^{j}$ is a block matrix between direct sums of indecomposable projective. Thus, as we did with the differentials, if we denote by $\phi_{w,j}$ the blocks in $\varphi^j$, we can represent $\varphi^j$ by a formal sum
    \begin{equation}
    \label{eq:form-morph}
    \varphi^j:\sum_{w \in \mathbf{Pa}}p(w){\phi}_{w,j}.
	\end{equation}      
    
From commutative diagram 
    $$
    \xymatrix{
    \mathbf{P}^\bullet\ar[d]_{\varphi^\bullet}: &\cdots\ar[r]  &\mathbf{P}^j\ar[d]_{\varphi^j}\ar[r]^{\partial^j} & \mathbf{P}^{j+1}\ar[d]^{\varphi^{j+1}}\ar[r]    & \cdots\\
\widetilde{\mathbf{P}}^\bullet: &\cdots\ar[r]  &\widetilde{\mathbf{P}}^j\ar[r]_{\widetilde{\partial}^j} & \widetilde{\mathbf{P}}^{j+1}\ar[r]    & \cdots}
	$$
we obtain, under the
notations above,  the following equation:    

    \begin{equation}\label{Formula 1}
     \sum_{\substack{w=w_1w_2 \\ w_1, w_2 \in \mathbf{Pa} }}p(w)\phi_{w_1,j}\widetilde{\mathbf{A}}_{w_2,j}=\sum_{\substack{w'=w_3w_4 \\ w_3, w_4 \in \mathbf{Pa}}}p({w}')\mathbf{A}_{w_3,j}\phi_{w_4,j+1}.
 \end{equation}
 
Finally, the functor $F$ will be defined in morphisms $\varphi^j$ as a matrix in terms of its $\phi_{w,j}$. Summarizing, we have:


\begin{definition} 
Let $\mathbf{F}:\mathbf{C}^b(\textup{proj} A)\longrightarrow s(\mathscr{Y}(A),\Bbbk)$ be the functor defined as follows:

\vspace{,2cm}

In objects, $\mathbf{P}^{\bullet}\in\mathbf{C}^b(\textup{proj} A)$, each $([u,i],[v,j])$th block is given by
   $$ \mathbf{F}(\mathbf{P}^{\bullet} )^{[v,j]}_{[u,i]}= 
   \begin{cases}
        \mathbf{A}_{w,i},  & \text{if  $j=i+1$ , $v=uw$ , $w \in \mathbf{Pa},$}   \\
         0, & \text{ otherwise,}
     \end{cases}$$
where the block $\mathbf{F}(\mathbf{P}^\bullet)_{[u,i]}($respectively, $\mathbf{F}(\mathbf{P}^\bullet)^{[u,i]})$ has $d_{t(u),i}$ rows (respectively, columns) and $\partial^i:\sum_{w \in \mathbf{Pa}}p(w)\mathbf{A}_{w,i}$ is the differential $\mathbf{P}^i\longrightarrow \mathbf{P}^{i+1}$.

\vspace{,2cm}
 
In morphisms, $\varphi^{\bullet}$ in $\mathbf{C}^b(\textup{proj} A)$, each $([u,i],[v,j])$th block is given by
    $$
    \mathbf{F}(\varphi^{\bullet} )^{[v,j]}_{[u,i]}=
    \begin{cases}
        \phi_{w,i},  & \text{if  $j=i$ , $ v=uw $,  $w \in \mathbf{Pa}$},   \\
         0, & \text{ otherwise. } 
     \end{cases}
     $$
where $\varphi^i:\sum_{w \in \mathbf{Pa}}p(w){\phi}_{w,i}$.
\end{definition}


Note that, the condition that all products $\partial^j\partial^{j+1}$ are equal to zero is translated as the requirement that all products of consecutive blocks are equal to zero, or equivalently, that the matrix $\mathbf{F}(\mathbf{P}^{\bullet})$ has a square equal to zero. The equality \eqref{Formula 1} implies $\mathbf{F}(\mathbf{P}^\bullet)\mathbf{F}(\varphi^\bullet)=\mathbf{F}(\varphi^\bullet)\mathbf{F}({\widetilde{\mathbf{P}}}^\bullet)$. Furthermore, $\mathbf{F}$ preserves composition due to the conventions we have chosen regarding the action of the matrices $\phi_{w,i}$ on the domain of $\varphi_i$ and the convention of arrow composition in the quiver $Q$.


\begin{remark}
The functor $\mathbf{F}$ is a slight extension of the original functor defined in \cite[Section 3]{BM03}. In that work, the functor is defined over the full subcategory of complexes of projective modules, where the image of each differential map is contained within the radical of the corresponding projective module. The difference in our approach lies in the consideration of trivial paths. 
\end{remark}


Before proceeding with the results in this section and exploring the connection between the triangulated structures of $\mathbf{K}^b(\textup{proj} A)$ and $\mathcal{K}(\mathscr{Y}(A),\Bbbk)$, let us first consider the examples \ref{example 3.4} and \ref{ex:ima-cone}.


\begin{example}
\label{example 3.4}
Let us consider the gentle algebra $A_1=\Bbbk(Q,I)$ from Example \ref{ex:gentle}(i). Its poset and involution is given in Example \ref{ex:Y(A)}. To the complex 
   $$
    \xymatrix{
    \mathbf{P}^\bullet: & \cdots\ar[r] &0\ar[r]  & \mathbf{P}^1 =P_1\ar[r]^-{\partial^1}    &  \mathbf{P}^2=P_1^2 \oplus P_2\ar[r] & 0\ar[r]& \cdots,
    }
    $$
with 
	$$
    \partial^1=\left(\begin{array}{ccc}
    2p(x) & p(e_1) &  p(a)+3p(ay)+2p(xay)
    \end{array}\right),
    $$
we calculate the matrix $\mathbf{F}(\mathbf{P}^\bullet)$. Since, the differential maps  correspond to the formal sums 
    $$
    \partial^j:
    { p(e_1)\mathbf{A}_{e_1,j}}+ p(e_2)\mathbf{A}_{{e_2,j}}+p(x)\mathbf{A}_{x,j}+
    p(a)\mathbf{A}_{a,j}+
    p(y)\mathbf{A}_{y,j}+ p(xa)\mathbf{A}_{xa,j}+
    {p(ay)\mathbf{A}_{ay,j}} +
    {p(xay)\mathbf{A}_{xay,j}},
    $$
 then 
	$$
	\begin{array}{ccccc}
 	&  \partial^1:p(e_1)\left(\begin{matrix}
    0 & 1
\end{matrix}\right) +p(e_2)\emptyset +p(x)\left(\begin{matrix}
    2 & 0
\end{matrix}\right) + p(a)\left(\begin{matrix}
    1
\end{matrix}\right) + p(y)\emptyset+p(xa)\left(\begin{matrix}
   0
\end{matrix}\right)+ p(ay)\left(\begin{matrix}
    3
\end{matrix}\right)+ p(xay)\left(\begin{matrix}
    2
\end{matrix}\right),
\end{array}
	$$
where $\emptyset$ indicates that this block matrix is empty. Hence,
{\small{$$\begin{array}{ccclllll}
 \mathbf{F}(\mathbf{P}^\bullet)^{[e_1,2]}_{[e_1,1]}=\mathbf{F}(\mathbf{P}^\bullet)^{[x,2]}_{[x,1]}=\left(\begin{matrix}
     0 & 1
 \end{matrix}\right),  & \mathbf{F}(\mathbf{P}^\bullet)^{[x,2]}_{[e_1,1]}=
 \left(\begin{matrix}
     2 & 0
     \end{matrix}\right),
    & 
  \mathbf{F}(\mathbf{P}^\bullet)^{[xay,2]}_{[e_1,1]}=\left(\begin{matrix}
      2 
 \end{matrix}\right), \\\\ 
   \mathbf{F}(\mathbf{P}^\bullet)^{[xa,2]}_{[e_1,1]}=\left(\begin{matrix}
     0 
 \end{matrix}\right) & \mathbf{F}(\mathbf{P}^\bullet)^{[xa,2]}_{[x,1]}=\left(\begin{matrix}
     1 
 \end{matrix}\right), &  \mathbf{F}(\mathbf{P}^\bullet)^{[xay,2]}_{[x,1]}=\left(\begin{matrix}
     3 
 \end{matrix}\right).
\end{array}$$}}


The table below describes the size of  Bondarenko's matrix. Recall that the block $\mathbf{F}(\mathbf{P}^\bullet)_{[u,i]}$ (respectively, $\mathbf{F}(\mathbf{P}^\bullet)^{[u,i]}$) has $d_{t(u),i}$ rows (respectively, columns). Specifically, $d_{t(u),i}=0$ for all $i\not=1,2,3$ 
	$$
	\begin{array}{|c||c|c|c|c|}
	\hline
               [u,i]\in \mathscr{Y}(A) & {[e_1,i] }&{[x,i]}&{[xa,i]}&{[xay,i]}\\
             \hline\hline
d_{t(u),i}& d_{1,1}=1&d_{1,1}=1 & d_{2,1}=0 & d_{2,1}=0  \\

& d_{1,2}=2&d_{1,2}=2 & d_{2,2}=1 & d_{2,2}=1 \\
\hline
	\end{array}
	$$
Therefore, 
we obtain the following  object in $s(\mathscr{Y}(A),\Bbbk)$ 
	$$
\mathbf{F}(\mathbf{P}^\bullet) ={\footnotesize{\left(\begin{array}{c||c|c|cc|cc|c|c}
                 & {\blue [e_{1},1] }&\blue{[x,1]}&\blue{[e_{1},2]}&&\blue{[x,2]}&&\blue{[xa,2]}&\blue{[xay,2]}\\
             \hline\hline
{\blue[e_1,1]} & 0& 0&{\red \bf 0} & {\red \bf 1}&{\red \bf 2} & {\red \bf 0}& {\red \bf 0} & {\red \bf 2}\\
\hline
{\blue[x,1]}  & 0& 0&0 & 0&{\red \bf 0} & {\red \bf 1}& {\red \bf 1} &{\red \bf 3}\\
\hline
{\blue[e_1,2]} & 0& 0&0 & 0&0 & 0& 0 &0\\
& 0& 0&0 & 0&0 & 0& 0 &0\\
\hline
{\blue[x,2]} & 0& 0&0 & 0&0 & 0& 0 &0\\
& 0& 0&0 & 0&0 & 0& 0 &0\\
\hline
{\blue[xa,2]}  & 0& 0&0 & 0&0 & 0& 0 &0\\
\hline
{\blue[xay,2]}  & 0& 0&0 & 0&0& 0& 0 &0\\
\end{array}\right)}}
	$$
	
\vspace{,3cm}

	Now, let us determine the matrix $\mathbf{F}(\varphi^\bullet)$ for any $\varphi^\bullet \in \textup{End}_{\mathbf{C}^b(\textup{proj}A)}(\mathbf{P}^\bullet)$. The condition $\partial^1\varphi^2=\varphi^1\partial^1$ implies the existence of $\alpha,\lambda,\delta,\epsilon,\beta,\gamma \in \Bbbk$ such that
	$$
    \varphi^1
    =\left(\begin{array}{c}
    \lambda p(x) +  \beta p(e_1)
 \end{array}\right),
 \ \ \  
    \varphi^2=
 \left(\begin{array}{ccc}
    \alpha p(x)+  \beta p(e_1) & \gamma p(x) &  \delta p(xa)+\epsilon p(xay)\\
     0 & \lambda p(x)+\beta p(e_1) & \lambda p(xa) + 3\lambda p(xay)\\ 
    0& 0& \beta p(e_2) 
 \end{array}\right).
    $$ 
Since, each morphism $\varphi^j$ is  represented by the formal sums 
   \begin{align*}
    \varphi^j&:
    { p(e_1)\phi_{e_1,j}}+ p(x)\phi_{x,j}+
    p(a)\phi_{a,j}+
    p(y)\phi_{y,j}+ p(xa)\phi_{xa,j}+
    {p(ay)\phi_{ay,j}} +
    {p(xay)\phi_{xay,j}},
   \end{align*}
we have
\begin{align*}
  \varphi^1 &:
    { p(e_1)\left(\begin{matrix}
    \beta
\end{matrix}\right)}+ p(x)\left(\begin{matrix}
    \lambda
\end{matrix}\right)+
    p(a)\emptyset+
    p(y)\emptyset+ p(xa)\emptyset+
    {p(ay)\emptyset} +
    {p(xay)\emptyset},\\ \\
    \varphi^2 & :
    { p(e_1)\left(\begin{array}{ccc}
     \!\! \beta  & 0 \!\!\!\\
     \!\! 0 & \beta \!\!\!
 \end{array}\right)}+ p(e_2)\left(\begin{matrix}
 \beta
 \end{matrix}\right)+ p(x)\left(\begin{array}{ccc}
    \!\! \alpha & \gamma \!\!\!\\
     \!\! 0 & \lambda \!\!\!
 \end{array}\right)+
   p(a)\left(\begin{array}{ccc}
   \!\! 0 \!\!\!\\
   \!\! 0 \!\!\!
 \end{array}\right)+
    p(y)\left( 0 \right)+ p(xa)\left(\begin{array}{ccc}
    \!\! \delta \!\!\!\\
    \!\! \lambda \!\!\! 
 \end{array}\right)+
    {p(ay)}\left(\begin{array}{ccc}
    \!\! 0\!\!\!\\
    \!\! 0\!\!\! 
 \end{array}\right) +
    {p(xay)}\left(\begin{array}{ccc}
    \!\! \epsilon\!\!\!\\
     \!\! 3\lambda\!\!\! 
 \end{array}\right).
\end{align*}
Then, we have the following morphism in $s(\mathscr{Y}(A),\Bbbk)$
	$$
\mathbf{F}(\varphi^\bullet) ={\footnotesize{\left(\begin{array}{c||c|c|cc|cc|c|c}
                 & {\blue [e_{1},1] }&\blue{[x,1]}&\blue{[e_{1},2]}&&\blue{[x,2]}&&\blue{[xa,2]}&\blue{[xay,2]}\\
             \hline\hline
{\blue[e_1,1]} & {\red\pmb{\beta}}& {\red\pmb{\lambda}}&0 & 0&0 & 0& 0 &0 \\
\hline
{\blue[x,1]}  & 0& {\red\pmb{\beta}}&0 & 0&0 & 0& 0&0\\
\hline
{\blue[e_1,2]} & 0& 0&{\red\pmb{\beta}}& {\red\pmb{0}}&{\red\pmb{\alpha}}& {\red\pmb{\gamma}}& {\red\pmb{\delta}}&{\red\pmb{\epsilon}}\\
& 0& 0&{\red\pmb{0}}& {\red\pmb{\beta}}&{\red\pmb{0}}& {\red\pmb{\lambda}}& {\red\pmb{\lambda}}&{\red\pmb{3\lambda}}\\
\hline
{\blue[x,2]} & 0& 0&0 & 0&{\red\pmb{\beta}}& {\red\pmb{0}}& {\red\pmb{0}}&{\red\pmb{0}}\\
& 0& 0&0 & 0&{\red\pmb{0}}& {\red\pmb{\beta}}& {\red\pmb{0}}&{\red\pmb{0}}\\
\hline
{\blue[xa,2]}  & 0& 0&0 & 0&0 & 0& {\red\pmb{\beta}}&{\red\pmb{0}}\\
\hline
{\blue[xay,2]}  & 0& 0&0 & 0&0& 0& 0 &{\red\pmb{\beta}}\\
\end{array}\right)}}
$$

\end{example}


\vspace{,3cm}

It is well known that $\mathbf{K}^b(\textup{proj} A)$ has triangulated structure (see \cite[Theorem 2.3.1, p. 11]{KZ98}). The shift functor is denoted by $[1]$ for $\mathbf{C}^b(\textup{proj} A)$ and $\mathbf{K}^b(\textup{proj} A)$, respectively. For any morphism $\varphi^\bullet$, the distinguished triangles are (up to isomorphism) of the form $\xymatrix{\mathbf{P}^\bullet\ar[r]^{\varphi^\bullet}	& \widetilde{\mathbf{P}}^\bullet\ar[r]^-{\iota_{\widetilde{P}}^\bullet}	& \mathbf{C}(\varphi)^\bullet\ar[r]^{\pi_{P[1]}^\bullet}	 	&   \mathbf{P}^{\bullet}[1]}$, where $\mathbf{C}(\varphi)^\bullet$ is called the \emph{mapping cone} of $\varphi^\bullet$. This triangle is called \emph{standard triangle} associated with $\varphi^\bullet$ in $\mathbf{K}^b(\textup{proj} A)$. Recall that $\mathbf{C}(\varphi)^\bullet$ is the complex in $\mathbf{C}^b(\textup{proj}A)$ with differential $\partial_{\varphi}^j$ given by  
	$$
	\xymatrix{ \mathbf{P}^{j+1}\oplus \widetilde{\mathbf{P}}^{j}\ar[rrrr]^-{\partial_{\varphi}^j:=\left(\begin{matrix}
     -{\partial}^{j+1}  & \varphi^{j+1} \\
     0  & \widetilde{\partial}^j
 \end{matrix}\right)}&&&&  \mathbf{P}^{j+2}\oplus \widetilde{\mathbf{P}}^{j+1} }.
 	$$
 
Note that, from representations \eqref{form:diff} and \eqref{eq:form-morph}, each differential $\partial_{\varphi}^{j}$ is represented by the formal sum
\begin{align}\label{form:cone}
\partial^j_{\varphi}=\left(\begin{matrix}
     -{\partial}^{j+1}  & \varphi^{j+1} \\
     0  & \widetilde{\partial}^j
 \end{matrix}\right):\left(\begin{matrix}
     -\sum\limits_{w \in \mathbf{Pa}}p(w)\mathbf{A}_{w,j+1}  & \sum\limits_{w \in \mathbf{Pa}}p(w)\phi_{w,j+1} \\
     0  & \sum\limits_{w \in \mathbf{Pa}}p(w)\widetilde{\mathbf{A}}_{w,j}
 \end{matrix}\right).
\end{align}

 
\begin{remark}
\label{diagramkappa}
It is straightforward to verify that $\mathbf{F}$ is an additive functor and that the equality $\llbracket -\rrbracket \circ \mathbf{F} = \mathbf{F} \circ [1]$ holds. 
\end{remark}
 
 
\begin{example}
\label{ex:ima-cone}
The mapping cone of endomorphism $\varphi^\bullet$ from Example \ref{example 3.4} is 
 $$
    \xymatrix{
   \mathbf{C}(\varphi^\bullet):  \cdots\ar[r] &0\ar[r]  & \mathbf{P}^0=P_1\ar[r]^-{\partial_{\varphi}^0} &   \mathbf{P}^1=P_1^2 \oplus P_2\oplus P_1\ar[r]^-{{\partial_{\varphi}^1}} & \mathbf{P}^2 =P_1^2 \oplus P_2\ar[r]&  0\ar[r]& \cdots,
    }
    $$
    with differentials
\begin{align*}
\partial_{\varphi}^0 & =\left(\begin{array}{cccc}
-2p(x) & -p(e_1) &  -p(a)-3p(ay)-2p(xay) & \lambda p(x) + \beta p(e_1)
\end{array}\right)\\
\\
\partial_{\varphi}^1&= \left(\begin{array}{cccc}
 \alpha p(x)+  \beta p(e_1)  & \gamma p(x) &  \delta p(xa)+\epsilon p(xay)\\
     0 & \lambda p(x)+\beta p(e_1) &\lambda p(xa)+ 3\lambda p(xay)\\ 
    0& 0& \beta p(e_2) \\
     2p(x) & p(e_1) &  p(a)+3p(ay)+2p(xay)
\end{array}\right)
\end{align*}
The representations of the differential maps are given by 
\begin{align*}
\partial^0_{\varphi}&:p(e_1)\!\left(\begin{matrix}
    0 \  -1 \  \beta
\end{matrix}\right)\!+p(e_2)\emptyset \!+p(x)\!\left(\begin{matrix}
    -2 \   0\  \lambda
\end{matrix}\right)\! - \!p(a)\!\left(\begin{matrix}
    1
\end{matrix}\right)\! + \! p(y)\emptyset\!+\!p(xa)\!\left(\begin{matrix}
   0
\end{matrix}\right)\! - \! p(ay)\!\left(\begin{matrix}
    3
\end{matrix}\right)\! - \! p(xay)\!\left(\begin{matrix}
    2
\end{matrix}\right)\\
\\
 \partial_{\varphi}^1 &:
    { p(e_1)\left(\begin{array}{ccc}
     \!\! \beta  & 0 \!\!\!\\
     \!\! 0 & \beta \!\!\! \\
     \!\! 0 & 1 \!\!\!
 \end{array}\right)}
 + p(e_2)\left(\beta\right)
 +p(x)\left(\begin{array}{ccc}
   \!\! \alpha & \gamma \!\!\!\\
   \!\!  0 & \lambda \!\!\!\\
   \!\!  2 & 0 \!\!\! 
 \end{array}\right)
 +p(a)\left(\begin{array}{ccc}
   \!\! 0 \!\!\! \\
   \!\! 0 \!\!\!\\
   \!\! 1 \!\!\!
 \end{array}\right)
 +p(y)\left( 0 \right)
 +p(xa)\left(\begin{array}{ccc}
   \!\! \delta \!\!\!\\
   \!\!  \lambda\!\!\! \\
   \!\!  0 \!\!\!
 \end{array}\right)
 +{p(ay)}\left(\begin{array}{ccc}
   \!\! 0 \!\!\!\\
   \!\! 0 \!\!\!\\
   \!\! 3 \!\!\!
 \end{array}\right) 
 +{p(xay)}\left(\begin{array}{ccc}
   \!\! \epsilon\!\!\! \\
   \!\! 3\lambda\!\!\! \\
   \!\! 2\!\!\!
 \end{array}\right).
\end{align*}
Thus, using a similar process to the one we used to calculate the block matrix $\mathbf{F}(\mathbf{P}^\bullet)$, we have for $\mathbf{F}(\mathbf{C}(\varphi)^\bullet)$ the following table  
$$
\begin{array}{|c||c|c|c|c|}
\hline
               [u,i]\in \mathscr{Y}(A) & {[e_1,i] }&{[x,i]}&{[xa,i]}&{[xay,i]}\\
             \hline\hline
& d_{1,0}=1&d_{1,0}=1 & d_{2,0}=0 & d_{2,0}=0  \\
d_{t(u),i} 
& d_{1,1}=3&d_{1,1}=3 & d_{2,1}=1 & d_{2,1}=1 \\
& d_{1,2}=2&d_{1,2}=2 & d_{2,2}=1 & d_{2,2}=1 \\
\hline
\end{array}
$$
Therefore, we obtain the following  object  $\mathbf{F}(\mathbf{C}(\varphi)^\bullet)$ in $s(\mathscr{Y}(A),\Bbbk)$, which is given by 
{\footnotesize {$$
\left(\begin{array}{c||c|c|c|c|c|c|c|c|c|c}
                 & {\blue [e_{1},0] }&\blue{[x,0]}&\blue{[e_{1},1]}&\blue{[x,1]}&\blue{[xa,1]}&\blue{[xay,1]}& \blue{[e_{1},2]}&\blue{[x,2]}&\blue{[xa,2]}&\blue{[xay,2]}\\
             \hline\hline
{\blue[e_1,0]} & 0& 0&{\red \bf 0} \ \ {\red \bf -1} \ \ {\red\pmb{\beta}} & {\red \bf -2}\ \  {\red \bf 0} \ \  {\red\pmb{\lambda}} & {\red \bf 0} & {\red \bf -2}& 0\ \  0& 0\ \  0& 0& 0 \\
\hline
{\blue[x,0]}  & 0& 0&0 \ \ 0\ \ 0 & {\red \bf 0}\ \  {\red \bf -1} \ \ {\red\pmb{\beta}} & {\red \bf -1} & {\red \bf -3}& 0\ \  0& 0\ \  0& 0& 0 \\
\hline
 & 0& 0&0 \ \  0\ \ 0 & 0\ \  0 \ \  0 & 0 & 0& {\red\pmb{\beta}} \ \  {\red \bf 0}& {\red\pmb{\alpha}}\ \  {\red\pmb{\gamma}}& {\red\pmb{\delta}}& {\red\pmb{\epsilon}}\\
{\blue[e_1,1]}& 0& 0&0 \ \  0\ \ 0 & 0\ \  0 \ \  0 & 0 & 0& {\red \bf 0} \ \  {\red\pmb{\beta}}& {\red \bf 0} \ \ {\red\pmb{\lambda}}& {\red\pmb{\lambda}}& {\red\pmb{3\lambda}} \\
& 0& 0&0 \ \  0 \ \ 0 & 0\ \  0 \ \  0 & 0 & 0& {\red \bf 0} \ \ {\red \bf 1}& {\red \bf 2}\ \  {\red \bf 0}& {\red \bf 0}& {\red\pmb{ 2 }} \\ 
\hline
 & 0& 0&0 \ \ 0\ \ 0 & 0\ \  0 \ \ 0 & 0 & 0& 0\ \ 0& {\red\pmb{ \beta}}\ \  {\red \bf 0}& {\red \bf 0}& {\red \bf 0} \\
{\blue[x,1]}& 0& 0&0 \ \  0 \ \ 0 & 0\ \  0 \ \  0 & 0 & 0& 0\ \  0& {\red \bf 0}\ \  {\red\pmb{ \beta}}& {\red \bf 0}& {\red \bf 0} \\
& 0& 0&0 \ \  0\ \ 0 & 0\ \  0 \ \  0 & 0 & 0& 0\ \  0& {\red \bf 0}\ \  {\red \bf 1}& {\red \bf 1}& {\red \bf 3} \\
\hline
{\blue[xa,1]}  & 0& 0&0 \ \  0 \ \ 0 & 0\ \  0 \ \ 0 & 0 & 0& 0 \ \ 0& 0\ \  0& {\red\pmb{ \beta}}& {\red \bf 0} \\
\hline
{\blue[xay,1]}  & 0& 0&0 \ \  0\ \ 0 & 0\ \  0 \ \  0 & 0 & 0& 0\ \  0& 0\ \  0& 0& {\red\pmb{ \beta}} \\
\hline
{\blue[e_1,2]} & 0& 0&0 \ \  0\ \ 0 & 0\ \  0 \ \  0 & 0 & 0& 0\ \  0& 0 \ \ 0& 0& 0 \\
& 0& 0&0 \ \  0\ \ 0 & 0\ \  0 \ \  0 & 0 & 0& 0\ \  0& 0 \ \ 0& 0& 0 \\
\hline
{\blue[x,2]}  & 0& 0&0 \ \  0\ \ 0 & 0\ \  0 \ \  0 & 0 & 0& 0\ \  0& 0 \ \ 0& 0& 0 \\
& 0& 0&0 \ \  0\ \ 0 & 0\ \  0 \ \  0 & 0 & 0& 0\ \  0& 0 \ \ 0& 0& 0 \\
\hline
{\blue[xa,2]} & 0& 0&0 \ \  0\ \ 0 & 0\ \  0 \ \  0 & 0 & 0& 0\ \  0& 0 \ \ 0& 0& 0 \\
\hline
{\blue[xay,2]} & 0& 0&0 \ \  0\ \ 0 & 0\ \  0 \ \  0 & 0 & 0& 0\ \  0& 0 \ \ 0& 0& 0 \\
\end{array}\right)
$$}}

\end{example}



\vspace{,3cm}

The following proposition establishes a connection between standard triangles in $\mathbf{C}^b(\textup{proj}A)$ and $\mathcal{K}$-standard triangles in $\mathcal{K}(\mathscr{Y}(A),\Bbbk)$.


\begin{proposition}
\label{standard}
The functor $\mathbf{F}:\mathbf{C}^b(\textup{proj}A)\longrightarrow s(\mathscr{Y}(A),\Bbbk)$ sends standard triangles of $\mathbf{C}^b(\textup{proj}A)$ to $\mathcal{K}$-standard triangles of $s(\mathscr{Y}(A),\Bbbk)$.

\end{proposition}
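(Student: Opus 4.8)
The plan is to prove the stronger, on-the-nose statement that $\mathbf{F}$ carries the standard triangle of a morphism $\varphi^\bullet\colon\mathbf{P}^\bullet\to\widetilde{\mathbf{P}}^\bullet$ precisely onto the $\mathcal{K}$-standard triangle of $T:=\mathbf{F}(\varphi^\bullet)$. Setting $B:=\mathbf{F}(\mathbf{P}^\bullet)$ and $C:=\mathbf{F}(\widetilde{\mathbf{P}}^\bullet)$, the task reduces to the four identities $\mathbf{F}(\mathbf{C}(\varphi)^\bullet)={\Omega_T}$, $\mathbf{F}(\iota_{\widetilde P}^\bullet)=\iota_C$, $\mathbf{F}(\pi_{P[1]}^\bullet)=\pi_{\llbracket B\rrbracket}$ and $\mathbf{F}(\mathbf{P}^\bullet[1])=\llbracket B\rrbracket$. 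The last one is exactly Remark~\ref{diagramkappa}, so the work concentrates on the first three, which identify all objects and arrows of the two sequences.

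First I would treat the object identity $\mathbf{F}(\mathbf{C}(\varphi)^\bullet)={\Omega_T}$, beginning with the row/column partitions. Since $\mathbf{C}(\varphi)^i=\mathbf{P}^{i+1}\oplus\widetilde{\mathbf{P}}^i$, the multiplicity of $P_{t(u)}$ in degree $i$ is the sum of its multiplicity in $\mathbf{P}^{i+1}$ and in $\widetilde{\mathbf{P}}^i$; on the other side the horizontal band ${\Omega_T}_{[u,i]}$ has its rows split as those of $\llbracket B\rrbracket_{[u,i]}$ (equal to the rows of $B_{[u,i+1]}$) followed by those of $C_{[u,i]}$. These two counts coincide, so the partitions agree once the summands of the cone are listed with the $\mathbf{P}^{i+1}$-part first, matching the top-left/bottom-right block structure of ${\Omega_T}$.

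Then I would compute blocks. By the formal-sum expression \eqref{form:cone}, the $w$-multiplicity block of $\partial_\varphi^i$ is $\begin{pmatrix}-\mathbf{A}_{w,i+1}&\phi_{w,i+1}\\0&\widetilde{\mathbf{A}}_{w,i}\end{pmatrix}$, so for $v=uw$ the definition of $\mathbf{F}$ on objects gives exactly this matrix as $\mathbf{F}(\mathbf{C}(\varphi)^\bullet)_{[u,i]}^{[v,i+1]}$. On the other hand, reading the defining blocks of $B$, $C$ and $T$ through the degree shift in $\llbracket-\rrbracket$ yields
\[
{\Omega_T}_{[u,i]}^{[v,i+1]}=\begin{pmatrix}-B_{[u,i+1]}^{[v,i+2]}&T_{[u,i+1]}^{[v,i+1]}\\0&C_{[u,i]}^{[v,i+1]}\end{pmatrix}=\begin{pmatrix}-\mathbf{A}_{w,i+1}&\phi_{w,i+1}\\0&\widetilde{\mathbf{A}}_{w,i}\end{pmatrix},
\]
since $B_{[u,i+1]}^{[v,i+2]}=\mathbf{A}_{w,i+1}$, $T_{[u,i+1]}^{[v,i+1]}=\phi_{w,i+1}$ and $C_{[u,i]}^{[v,i+1]}=\widetilde{\mathbf{A}}_{w,i}$; all remaining blocks vanish on both sides. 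This gives $\mathbf{F}(\mathbf{C}(\varphi)^\bullet)={\Omega_T}$. For the two structure maps I would argue by inspection: each component $\iota_{\widetilde P}^i\colon\widetilde{\mathbf{P}}^i\hookrightarrow\mathbf{P}^{i+1}\oplus\widetilde{\mathbf{P}}^i$ and $\pi_{P[1]}^i\colon\mathbf{P}^{i+1}\oplus\widetilde{\mathbf{P}}^i\twoheadrightarrow\mathbf{P}^{i+1}$ uses only identities on projective summands, hence only trivial paths, so $\mathbf{F}$ sends them to matrices supported on the diagonal blocks $[u,i]\to[u,i]$ equal to $\begin{pmatrix}0&\textup{Id}\end{pmatrix}$ and $\begin{pmatrix}\textup{Id}\\0\end{pmatrix}$ respectively; by the partition matching of the previous step these are precisely $\iota_C$ and $\pi_{\llbracket B\rrbracket}$ from Lemma~\ref{lem:CT}.

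The step I expect to demand the most care is the bookkeeping inside the first identity: pinning down that the internal $2\times2$ partition of each block of ${\Omega_T}$, which is forced by $\llbracket B\rrbracket$, $C$ and the degree shift $i\mapsto i+1$, corresponds entry for entry to the direct-sum decomposition $\mathbf{P}^{i+1}\oplus\widetilde{\mathbf{P}}^i$ of the cone, all while keeping the antilexicographic order on $\mathscr{Y}(A)$ and the listing of summands in the cone mutually consistent. Once these conventions are fixed the verification is a routine but delicate index comparison.
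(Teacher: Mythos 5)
Your proposal is correct and follows essentially the same route as the paper: it computes the blocks of $\mathbf{F}(\mathbf{C}(\varphi)^\bullet)$ from the formal-sum representation \eqref{form:cone} to get $\mathbf{F}(\mathbf{C}(\varphi)^\bullet)=\Omega_{\mathbf{F}(\varphi^\bullet)}$, identifies the images of $\iota_{\widetilde{\mathbf{P}}}^\bullet$ and $\pi_{\mathbf{P}[1]}^\bullet$ with $\iota_C$ and $\pi_{\llbracket B\rrbracket}$ from Lemma~\ref{lem:CT}, and invokes Remark~\ref{diagramkappa} for the shift. Your additional bookkeeping on how the row/column partitions of the cone match the $2\times 2$ block partition of $\Omega_T$ is a point the paper leaves implicit, but it does not change the argument.
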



\begin{proof}
Let $\varphi^\bullet$ be a morphism from $\mathbf{P}^\bullet$ to ${\widetilde{\mathbf{P}}}^\bullet$ in $\mathbf{C}^b(\textup{proj}A)$, as the differentials of mapping cone $\mathbf{C}({\varphi})^\bullet$ are represented by \eqref{form:cone}, applying the functor $\mathbf{F}$ on $\mathbf{C}(\varphi)^\bullet$, we have that the $([u,i],[v,j])$th block is 
\begin{align*}
 \mathbf{F}(\mathbf{C}({\varphi})^\bullet )^{[v,j]}_{[u,i]}&= 
   \begin{cases}
         \left( \begin{array}{cc}
  -\mathbf{A}_{w,i+1}  & \phi_{w,i+1}\\ 
   0  & \widetilde{\mathbf{A}}_{w,i}
\end{array}\right) &,\ \text{if  $j=i+1$ , $v=uw$ , $w \in \mathbf{Pa},$}  \\\\
         \ \ \ \ \ \ \ \ \ \ \ \ \ \ \ 0 &,\ \text{otherwise.}
     \end{cases}\\
     &=  \left( \begin{array}{cc}
  -\mathbf{F}(\mathbf{P}^\bullet)^{[v,j+1]}_{[u,i+1]}  & \mathbf{F}(\varphi^\bullet)^{[v,j]}_{[u,i+1]}\\ 
   \mathbf{0}  & \mathbf{F}(\widetilde{\mathbf{P}}^\bullet)^{[v,j]}_{[u,i]}
\end{array}\right). &  
\end{align*} 
 Consequently, it follows that $\mathbf{F}(\mathbf{C}({\varphi})^\bullet)={\Omega_{\mathbf{F}(\varphi^\bullet)}}$. We can conclude similarly that  $\mathbf{F}(\iota_{\widetilde{\mathbf{P}}}^\bullet)=\iota_{\mathbf{F}({\widetilde{\mathbf{P}}}^\bullet)}$ and $\mathbf{F}(\pi_{{\mathbf{P}[1]}}^\bullet )=\pi_{\llbracket\mathbf{F}({{\mathbf{P}}}^\bullet)\rrbracket}$. Finally, applying the functor ${\mathbf{F}} $ on  the  standard triangle in $\mathbf{C}^b(\textup{proj}A)$  
 $$
    \xymatrix{ \mathbf{P}^\bullet\ar[r]^{\varphi^{\bullet}}  &\widetilde{\mathbf{P}}^\bullet\ar[r]^-{\iota_{\widetilde{\mathbf{P}}}^\bullet}    & \mathbf{C}(\varphi)^\bullet\ar[r]^-{\pi_{{\mathbf{P}}[1]}^\bullet} & {\mathbf{P}}^\bullet[1]},
    $$ 
from Remark \ref{diagramkappa} we obtain the triangle in $s(\mathcal{Y}(A),\Bbbk)$ 
$$  \xymatrix{ {\mathbf{F}}(\mathbf{P}^\bullet)\ar[r]^{{\mathbf{F}}(\varphi^{\bullet})}  &{\mathbf{F}} (\widetilde{\mathbf{P}}^\bullet)\ar[r]^-{\iota_{{\mathbf{F}}(\widetilde{\mathbf{P}}^\bullet)}}    & {\Omega_{{\mathbf{F}} (\varphi^\bullet)}}\ar[r]^-{ \pi_{\llbracket {\mathbf{F}}({\mathbf{P}}^\bullet)\rrbracket} } & {\llbracket{\mathbf{F}} (\mathbf{P}}^\bullet)\rrbracket},$$
which is  $\mathcal{K}$-standard triangle. 

\end{proof}
\begin{example}
Consider the gentle algebra $A_2=\Bbbk Q$ from Example \ref{ex:gentle}(ii). The poset and involution for this algebra are provided in Example \ref{ex:Y(A)}, which align with those in Example \ref{ex:triangulo}, except that $u$ is replaced by $e_{s(a)}$ and $v$ is replaced by $e_{s(b)}$. Consider the endomorphism $\varphi^\bullet =(\varphi^1 \ \ \varphi^2)\in \textup{Hom}_{\mathbf{C}^b(\textup{proj}A)}(\mathbf{P}^\bullet , \mathbf{P}^\bullet [1])$, with 
	$$
    \xymatrix{
    \mathbf{P}^\bullet: & \cdots\ar[r] &0\ar[r]  & \mathbf{P}^1 =P_1\ar[r]^-{\partial^1}    &  \mathbf{P}^2=P_2\ar[r]& 0\ar[r]& \cdots
    }
    $$
$\partial^1=p(a)-p(b)$,  $ \varphi^1= p(a)+p(b) $ and $ \varphi^2= 0 $. 
The $\mathcal{K}$-standard triangle in $s(\mathscr{Y},\Bbbk)$ of the Example \ref{ex:triangulo} is the image over $\mathbf{F}$ of the standard triangle
 	$$ 
 	\xymatrix{\mathbf{P}^\bullet\ar[r]^{\varphi^{\bullet}}  &{\mathbf{P}}^\bullet[1]\ar[r]^-{\iota_{\widetilde{\mathbf{P}}}^\bullet}    & \mathbf{C}(\varphi)^\bullet\ar[r]^-{\pi_{{\mathbf{P}}[1]}^\bullet} & {\mathbf{P}}^\bullet[1]}.
    $$
\end{example}


\vspace{,3cm}

Recall that two morphisms $\varphi^\bullet, \psi^\bullet:\mathbf{P}^\bullet \longrightarrow \widetilde{\mathbf{P}}^\bullet$ are said to be \emph{homotopic}, denoted by $\varphi^\bullet \sim \psi^\bullet$, if there exists a sequence of morphisms $s^j: \mathbf{P}^j \longrightarrow \widetilde{\mathbf{P}}^{j-1}$ such that $\varphi^j - \psi^j = s^j \widetilde{\partial}^{j-1} + \partial^{j} s^{j+1}$ for all $j \in \mathbb{Z}$. 

The following example supports Remark \ref{remark2.1}(iii) by providing two homotopic morphisms $\varphi^{\bullet}\sim \psi^{\bullet}$ such that $\mathbf{F}(\varphi^{\bullet})\not\equiv \mathbf{F}(\psi^{\bullet})$.


\begin{example}
\label{exem3.6}
Let us consider the gentle algebra $A_1=\Bbbk(Q,I)$ from Example \ref{ex:gentle}(i). Its poset and involution is given in Example \ref{ex:Y(A)}. Take the endomorphism $\varphi^\bullet =(\varphi^1 \ \ \varphi^2)\in \textup{End}_{\mathbf{C}^b(\textup{proj}A)}(\mathbf{P}^\bullet)$, with 
	$$
    \xymatrix{
    \mathbf{P}^\bullet: & \cdots\ar[r] &0\ar[r]  & \mathbf{P}^1 =P_1\ar[r]^-{\partial^1}    &  \mathbf{P}^2=P_1\ar[r]& 0\ar[r]& \cdots
    }
    $$
and $\varphi^1=\varphi^2=\partial^1=p(x)+p(e_1)$. It  is easy see that $\varphi^\bullet \sim \mathbf{0}^\bullet$. Applying the functor $\mathbf{F}$ we obtain (similarly to Example \ref{example 3.4}):

{\small{\begin{align*}
\mathbf{F}(\mathbf{P}^\bullet) &=\left(\begin{array}{c||c|c|c|c}
                 & {\blue [e_{1},1] }&\blue{[x,1]}&\blue{[e_{1},2]}&\blue{[x,2]}\\
             \hline\hline
{\blue[e_1,1]}  & 0& 0& \textcolor{red}{1} &  \textcolor{red}{1}\\
\hline
{\blue[x,1]}   & 0& 0&0 &  \textcolor{red}{1}\\
\hline
{\blue[e_1,2]}  & 0& 0&0 & 0\\
\hline
{\blue[x,2]} & 0& 0&0 & 0\\
\end{array}\right) \ ,\ &\mathbf{F}(\varphi^\bullet) =\left(\begin{array}{c||c|c|c|c}
                 & {\blue [e_{1},1] }&\blue{[x,1]}&\blue{[e_{1},2]}&\blue{[x,2]}\\
             \hline\hline
{\blue[e_1,1]}  & \textcolor{red}{1}&  \textcolor{red}{1}&0 & 0\\
\hline
{\blue[x,1]}   & 0&  \textcolor{red}{1}&0 & 0\\
\hline
{\blue[e_1,2]}  & 0& 0& \textcolor{red}{1} &  \textcolor{red}{1}\\
\hline
{\blue[x,2]} & 0& 0&0 &  \textcolor{red}{1}\\
\end{array}\right)
\end{align*}}}

Naturally, any $\kappa$-matrix has to be the form
\begin{align*}
K&=\left(\begin{array}{c||c|c|c|c}
                 & {\blue [e_{1},1] }&\blue{[x,1]}&\blue{[e_{1},2]}&\blue{[x,2]}\\
             \hline\hline
{\blue[e_1,1]}  & a_{11}&  a_{12}&a_{13} & a_{14}\\
\hline
{\blue[x,1]}   & 0&  a_{22}&a_{23}& a_{24}\\
\hline
{\blue[e_1,2]}  & 0& 0& a_{33} &  a_{34}\\
\hline
{\blue[x,2]} & 0& 0&0 &  a_{44}\\
\end{array}\right)
 \end{align*}
with $a_{ij}\in\Bbbk$ for all $1\leq i\leq j \leq 4$. Note that $\mathbf{F}(\varphi^\bullet)\not=K\mathbf{F}(\mathbf{P}^\bullet)+\mathbf{F}(\mathbf{P}^\bullet)K$, which implies that $\mathbf{F}(\varphi^{\bullet})\not\equiv \mathbf{F}(\mathbf{0}^{\bullet})$. Therefore, the functor does not preserve the homotopy relation.

\end{example}


Our goal now is to establish a connection between the homotopy relation ``$\sim$'' and the relation ``$\simeq$'' (from Section~\ref{sec:2}) through the functor $\mathbf{F}$. To this end, we present the following technical lemma.

 
\begin{lemma}
\label{teclemma}
Let $\varphi^{\bullet} \in \textup{Hom}_{\mathbf{C}^b(\textup{proj} A)}\big(\mathbf{P}^\bullet, \widetilde{{\bf P}}^\bullet\big)$ be a morphism. If $\mathbf{F}(\varphi^\bullet) \simeq \mathbf{F}(\mathbf{0}^\bullet)=\mathbf{0}$  with $\mathcal{K}$-matrix $\mathcal{S}$, then 
	\begin{align*}    
   	\mathbf{F}(\varphi^{\bullet} )^{[uw,i]}_{[u,i]}
   	   &=\sum_{\substack{w_1w_2=w \\ w_1,  w_2 \in \textbf{Pa}}}\mathcal{S}^{[uw_1,i-1]}_{[u,i]}\mathbf{F}({\widetilde{\mathbf{P}}}^\bullet)^{[uw_1w_2,i]}_{[uw_1,i-1]}+ \sum_{\substack{w_3w_4=w \\ w_3,  w_4 \in \textbf{Pa} }}\mathbf{F}({{\mathbf{P}}}^\bullet)^{[uw_3,i+1]}_{[u,i]}\mathcal{S}^{[uw_3w_4,i]}_{[uw_3,i+1]}.
    \end{align*}
\end{lemma}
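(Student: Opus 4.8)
The plan is to read the claimed identity off the defining equation of the relation $\simeq$, block by block. Since $\mathbf{F}(\varphi^\bullet)\simeq\mathbf{0}$ via the $\mathcal{K}$-matrix $\mathcal{S}$, condition (ii) of the definition of $\simeq$—applied with $B=\mathbf{F}(\mathbf{P}^\bullet)$, $C=\mathbf{F}(\widetilde{\mathbf{P}}^\bullet)$ and $L=\mathcal{S}$—yields
$$\mathbf{F}(\varphi^\bullet)=\mathbf{F}(\mathbf{P}^\bullet)\,\mathcal{S}+\mathcal{S}\,\mathbf{F}(\widetilde{\mathbf{P}}^\bullet).$$
First I would fix $u\in\mathcal{Y}$, an integer $i$, and a path $w$ with $uw\in\mathcal{Y}$, and compare the $([u,i],[uw,i])$th blocks of the two sides. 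The left-hand block is exactly $\mathbf{F}(\varphi^\bullet)^{[uw,i]}_{[u,i]}$, the quantity to be computed, while the right-hand block expands, by the block-multiplication rule, into
$$\sum_{[p,k]\in\mathscr{Y}(A)}\mathbf{F}(\mathbf{P}^\bullet)^{[p,k]}_{[u,i]}\,\mathcal{S}^{[uw,i]}_{[p,k]}\ +\ \sum_{[p,k]\in\mathscr{Y}(A)}\mathcal{S}^{[p,k]}_{[u,i]}\,\mathbf{F}(\widetilde{\mathbf{P}}^\bullet)^{[uw,i]}_{[p,k]}.$$
The whole task is to collapse these two sums to the two sums in the statement.

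Next I would use the object-level definition of $\mathbf{F}$ to restrict the intermediate index $[p,k]$. In the first sum $\mathbf{F}(\mathbf{P}^\bullet)^{[p,k]}_{[u,i]}$ vanishes unless $k=i+1$ and $p=uw_3$ with $w_3\in\mathbf{Pa}$ and $\widehat{w_3}=u$; in the second sum $\mathbf{F}(\widetilde{\mathbf{P}}^\bullet)^{[uw,i]}_{[p,k]}$ vanishes unless $k=i-1$ and $uw=pw'$ with $w'\in\mathbf{Pa}$ and $\widehat{w'}=p$. Because of the clause $\widehat{w_3}=u$ (resp.\ $\widehat{w'}=p$), and because $uw\in\mathcal{Y}$ is by definition the prefix of $\widetilde{u}$ obtained from $u$ by appending $w$, all the poset elements occurring—$u$, $p$ and $uw$—are prefixes of the single maximal path $\widetilde{u}$, hence lie in the totally ordered chain $\mathcal{Y}_{\widetilde{u}}$, on which the order of $\mathscr{Y}(A)$ is comparison of lengths. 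In particular the first sum already forces $u\leq p$ and the second already forces $p\leq uw$.

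The key step is then to apply condition (iii) of the $\mathcal{K}$-matrix to supply the missing inequality in each case. In the first sum the surviving block $\mathcal{S}^{[uw,i]}_{[uw_3,i+1]}$ sits in the boundary situation (row level $i+1$, column level $i$), so (iii) makes it vanish unless $uw_3\leq uw$; combined with $u\leq uw_3$ this places $uw_3$ in the chain between $u$ and $uw$, so that $w_3$ is a prefix of $w$ and $w=w_3w_4$ with $w_4\in\mathbf{Pa}$. Symmetrically, in the second sum (iii) forces $\mathcal{S}^{[uw_1,i-1]}_{[u,i]}$ to vanish unless $u\leq uw_1$, which together with $uw_1\leq uw$ gives $w=w_1w_2$ with $w_2\in\mathbf{Pa}$. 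Re-indexing the two sums by these prefix decompositions turns them into
$$\sum_{\substack{w_3w_4=w\\ w_3,w_4\in\mathbf{Pa}}}\mathbf{F}(\mathbf{P}^\bullet)^{[uw_3,i+1]}_{[u,i]}\,\mathcal{S}^{[uw_3w_4,i]}_{[uw_3,i+1]}\qquad\text{and}\qquad\sum_{\substack{w_1w_2=w\\ w_1,w_2\in\mathbf{Pa}}}\mathcal{S}^{[uw_1,i-1]}_{[u,i]}\,\mathbf{F}(\widetilde{\mathbf{P}}^\bullet)^{[uw_1w_2,i]}_{[uw_1,i-1]},$$
which is precisely the asserted right-hand side. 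The main obstacle is this combinatorial collapse: one must argue cleanly that the object-level support conditions and the single surviving boundary case of (iii) together confine the intermediate index to the chain $\mathcal{Y}_{\widetilde{u}}$ between $u$ and $uw$, so that the poset inequalities become honest prefix decompositions of $w$; once that bookkeeping (together with the matching of block sizes guaranteed by the partition compatibility (i)) is in place, the identity drops out.
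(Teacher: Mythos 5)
Your proposal is correct and follows essentially the same route as the paper: expand the block product of $\mathbf{F}(\mathbf{P}^\bullet)\mathcal{S}+\mathcal{S}\mathbf{F}(\widetilde{\mathbf{P}}^\bullet)$ at the $([u,i],[uw,i])$ position, kill all intermediate indices except level $i\pm1$ using the support of $\mathbf{F}$ on objects, use condition (iii) of the $\mathcal{K}$-matrix in its boundary case to pin the surviving indices between $u$ and $uw$ in the chain $\mathcal{Y}_{\widetilde{u}}$, and reindex by prefix decompositions of $w$. The only cosmetic difference is the order in which you extract the two inequalities $u\leq r$ and $r\leq uw$ (you get one from the object support and one from (iii) in each sum, while the paper imposes them as successive restrictions of the summation set), which does not change the substance.
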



\begin{proof}
Since $\mathbf{F}(\varphi^\bullet)=\mathcal{S}\mathbf{F}({\widetilde{\mathbf{P}}}^\bullet)+ \mathbf{F}({{\mathbf{P}}}^\bullet)\mathcal{S}$, in the $([u,i],[uw,i])$th position we have 
\begin{align*}
\mathbf{F}(\varphi^{\bullet} )^{[uw,i]}_{[u,i]}&=\sum_{[r,k]\in \mathscr{Y}(A)}\mathcal{S}^{[r,k]}_{[u,i]}\mathbf{F}({\widetilde{\mathbf{P}}}^\bullet)^{[uw,i]}_{[r,k]}+ \sum_{[r,k]\in \mathscr{Y}(A)}\mathbf{F}({{\mathbf{P}}}^\bullet)^{[r,k]}_{[u,i]}\mathcal{S}^{[uw,i]}_{[r,k]}\\
 & = \sum_{r\in \mathbf{Pa}}\mathcal{S}^{[r,i-1]}_{[u,i]}\mathbf{F}({\widetilde{\mathbf{P}}}^\bullet)^{[uw,i]}_{[r,i-1]}+ \sum_{r\in \mathbf{Pa}}\mathbf{F}({{\mathbf{P}}}^\bullet)^{[r,i+1]}_{[u,i]}\mathcal{S}^{[uw,i]}_{[r,i+1]}\\
 & = \sum_{\substack{r\in \mathbf{Pa}\\ u\leq r}}\mathcal{S}^{[r,i-1]}_{[u,i]}\mathbf{F}({\widetilde{\mathbf{P}}}^\bullet)^{[uw,i]}_{[r,i-1]}+ \sum_{\substack{r\in \mathbf{Pa}\\ u\leq r}}\mathbf{F}({{\mathbf{P}}}^\bullet)^{[r,i+1]}_{[u,i]}\mathcal{S}^{[uw,i]}_{[r,i+1]}\\
 & = \sum_{\substack{r\in \mathbf{Pa}\\ u\leq r\leq uw}}\mathcal{S}^{[r,i-1]}_{[u,i]}\mathbf{F}({\widetilde{\mathbf{P}}}^\bullet)^{[uw,i]}_{[r,i-1]}+ \sum_{\substack{r\in \mathbf{Pa}\\ u\leq r\leq uw}}\mathbf{F}({{\mathbf{P}}}^\bullet)^{[r,i+1]}_{[u,i]}\mathcal{S}^{[uw,i]}_{[r,i+1]}
\end{align*}
All the equalities follow from item \textup{(iii)} of $\mathcal{K}$-matrix and the definition of the functor $\mathbf{F}$. For instance, in the second equality, we have $\mathbf{F}({\widetilde{\mathbf{P}}}^\bullet)^{[uw,i]}_{[r,k]} = 0$ for $k \neq i-1$ (respectively, $\mathbf{F}({{\mathbf{P}}}^\bullet)^{[r,k]}_{[u,i]} = 0$ for $k \neq i+1$). In the third equality, $\mathcal{S}^{[r,i-1]}_{[u,i]} = 0$ and $\mathbf{F}(\mathbf{P}^\bullet)^{[r,i+1]}_{[u,i]} = 0$ for $u > r$. Lastly, $\mathcal{S}^{[uw,i]}_{[r,i+1]} = 0$ and $\mathbf{F}({\widetilde{\mathbf{P}}}^\bullet)^{[uw,i]}_{[r,i-1]} = 0$ for $r > uw$.

 Now, for each $r\in \mathbf{Pa}$ such that $u \leq r \leq uw$, there exist $w_1, w_2 \in \mathbf{Pa}$ such that $r = uw_1$ and $uw = rw_2$, hence $w = w_1w_2$ by condition (ii) from definition of gentle algebra. Thus, we have 

 	\begin{align*}    
   	\mathbf{F}(\varphi^{\bullet} )^{[uw,i]}_{[u,i]}
   	   &=\sum_{\substack{w_1w_2=w \\ w_1,  w_2 \in \textbf{Pa}}}\mathcal{S}^{[uw_1,i-1]}_{[u,i]}\mathbf{F}({\widetilde{\mathbf{P}}}^\bullet)^{[uw_1w_2,i]}_{[uw_1,i-1]}+ \sum_{\substack{w_3w_4=w \\ w_3,  w_4 \in \textbf{Pa} }}\mathbf{F}({{\mathbf{P}}}^\bullet)^{[uw_3,i+1]}_{[u,i]}\mathcal{S}^{[uw_3w_4,i]}_{[uw_3,i+1]}
\end{align*}

\end{proof}


\begin{proposition}
\label{equiv}

Let $\varphi^{\bullet} \in \textup{Hom}_{\mathbf{C}^b(\textup{proj} A)}\big(\mathbf{P}^\bullet, \widetilde{{\bf P}}^\bullet\big)$ be a morphism. Then $\varphi^\bullet \sim \mathbf{0}^\bullet$ if and only if  $\mathbf{F}(\varphi^\bullet) \simeq \mathbf{F}(\mathbf{0}^\bullet)=\mathbf{0}$.
\end{proposition}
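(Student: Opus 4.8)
The plan is to prove both implications by translating each side into the ``multiplicity'' language used to build $\mathbf{F}$ and then matching the homotopy identity against the defining equation of a $\mathcal{K}$-matrix, position by position. Write $\varphi^{j}:\sum_{w\in\mathbf{Pa}}p(w)\phi_{w,j}$, $\partial^{j}:\sum_{w}p(w)\mathbf{A}_{w,j}$ and $\widetilde\partial^{j}:\sum_{w}p(w)\widetilde{\mathbf{A}}_{w,j}$ as in the construction of $\mathbf{F}$, and for a candidate homotopy $s^{j}\colon\mathbf{P}^{j}\to\widetilde{\mathbf{P}}^{j-1}$ write $s^{j}:\sum_{w}p(w)\mathbf{s}_{w,j}$. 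Using the same composition rule for formal sums that underlies \eqref{Formula 1}, the relation $\varphi^{j}=s^{j}\widetilde\partial^{j-1}+\partial^{j}s^{j+1}$ is equivalent to the family of scalar-block identities
$$\phi_{w,i}=\sum_{\substack{w_1w_2=w\\ w_1,w_2\in\mathbf{Pa}}}\mathbf{s}_{w_1,i}\,\widetilde{\mathbf{A}}_{w_2,i-1}+\sum_{\substack{w_3w_4=w\\ w_3,w_4\in\mathbf{Pa}}}\mathbf{A}_{w_3,i}\,\mathbf{s}_{w_4,i+1}\qquad(\ast)$$
for all $w\in\mathbf{Pa}$, $i\in\mathbb{Z}$. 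The whole argument then consists of matching $(\ast)$ with the formula of Lemma~\ref{teclemma}.

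For $\varphi^{\bullet}\sim\mathbf{0}^{\bullet}\Rightarrow\mathbf{F}(\varphi^{\bullet})\simeq\mathbf{0}$, I would take a null-homotopy $s^{\bullet}$ and build $\mathcal{S}$ by placing each $\mathbf{s}_{w,k}$ in the degree-lowering slot dictated by $\mathbf{F}$: for nontrivial $w$ set $\mathcal{S}^{[\widehat{w}w,k-1]}_{[\widehat{w},k]}:=\mathbf{s}_{w,k}$ (the slot is unique by \cite[Proposition 2]{BM03}), for trivial $w=e_r$ set $\mathcal{S}^{[u,k-1]}_{[u,k]}:=\mathbf{s}_{e_r,k}$ for every $u\in\mathscr{Y}(A)$ with $t(u)=r$, and make all other blocks zero. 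One checks that $\mathcal{S}$ is a $\mathcal{K}$-matrix: condition (i) holds because each such block has the size $d_{t(u),k}\times\widetilde d_{t(v),k-1}$ forced by the partitions of $\mathbf{F}(\mathbf{P}^{\bullet})$ and $\mathbf{F}(\widetilde{\mathbf{P}}^{\bullet})$; condition (iii) holds because every nonzero block lowers the degree by exactly one and satisfies $u\le uw$; and condition (iv) is vacuous since every degree-preserving diagonal block of $\mathcal{S}$ vanishes. A direct block computation of $\mathbf{F}(\mathbf{P}^{\bullet})\mathcal{S}+\mathcal{S}\,\mathbf{F}(\widetilde{\mathbf{P}}^{\bullet})$ in position $([u,i],[uw,i])$ then reproduces the right-hand side of $(\ast)$, giving $\mathbf{F}(\varphi^{\bullet})=\mathbf{F}(\mathbf{P}^{\bullet})\mathcal{S}+\mathcal{S}\,\mathbf{F}(\widetilde{\mathbf{P}}^{\bullet})$.

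For the converse I would start from a $\mathcal{K}$-matrix $\mathcal{S}$ with $\mathbf{F}(\varphi^{\bullet})=\mathbf{F}(\mathbf{P}^{\bullet})\mathcal{S}+\mathcal{S}\,\mathbf{F}(\widetilde{\mathbf{P}}^{\bullet})$ and read a homotopy off it by reversing the recipe, setting $\mathbf{s}_{w,k}:=\mathcal{S}^{[\widehat{w}w,k-1]}_{[\widehat{w},k]}$. The reason for using the canonical prefix $\widehat{w}$ is that, for a factorization $w=w_1w_2$, every factor has the same maximal path $\widetilde{w}$ and one has $\widehat{w_1}=\widehat{w}$ and $\widehat{w_4}=\widehat{w}w_3$; hence evaluating Lemma~\ref{teclemma} at the single index $u=\widehat{w}$ turns its identity into exactly $(\ast)$, with each $\mathbf{s}$ occurring at its own canonical base point. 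Setting $s^{j}:\sum_{w}p(w)\mathbf{s}_{w,j}$ then yields $\varphi^{j}=s^{j}\widetilde\partial^{j-1}+\partial^{j}s^{j+1}$, i.e.\ $\varphi^{\bullet}\sim\mathbf{0}^{\bullet}$.

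The delicate point, and the step I expect to be the real obstacle, is the well-definedness of $\mathbf{s}_{w,k}$ in this converse. For nontrivial $w$ the block $\mathcal{S}^{[\widehat{w}w,k-1]}_{[\widehat{w},k]}$ is unambiguous because $\widehat{w}$ is determined by $w$. For a trivial path $w=e_r$ the encoding of $\mathbf{F}$ is redundant: a vertex $r$ carries up to two poset elements $u_r,\sigma(u_r)$ (this is the ``at most two paths with the same target'' of \cite[Proposition 2]{BM03}), so the copies of $P_r$ in $\mathbf{P}^{k}$ are represented twice, at the bands $[u_r,k]$ and $[\sigma(u_r),k]$. The coefficient $\mathbf{s}_{e_r,k}$ has to be a single matrix, whereas a priori $\mathcal{S}^{[u_r,k-1]}_{[u_r,k]}$ and $\mathcal{S}^{[\sigma(u_r),k-1]}_{[\sigma(u_r),k]}$ need not agree and no $\mathcal{K}$-matrix axiom forces them to, since condition (iv) constrains only the degree-preserving diagonal. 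The resolution I would pursue is to replace $\mathcal{S}$ by another $\mathcal{K}$-matrix realizing the same equation whose duplicated degree-lowering diagonal blocks coincide, by adjusting $\mathcal{S}$ on those positions by a correction lying in the kernel of $L\mapsto\mathbf{F}(\mathbf{P}^{\bullet})L+L\,\mathbf{F}(\widetilde{\mathbf{P}}^{\bullet})$ and compatible with the involution; once $\mathcal{S}$ is normalized in this way the canonical read-off is consistent and the paragraph above applies. Everything else is the bookkeeping already carried out in Lemma~\ref{teclemma} together with the verification that $\mathbf{F}$ preserves composition.
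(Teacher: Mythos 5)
Your proposal retraces the paper's own proof: the forward implication (assembling $\mathcal{S}$ from the homotopy coefficients $\mathbf{S}_{w,j}$) is the same and is sound, and your converse follows the same route through Lemma~\ref{teclemma}. The point you flag as the real obstacle --- that for a trivial path $e_r$ the two degree-lowering blocks $\mathcal{S}^{[u,i-1]}_{[u,i]}$ and $\mathcal{S}^{[\sigma(u),i-1]}_{[\sigma(u),i]}$ must both be read off as one and the same coefficient $\mathbf{s}_{e_r,i}$, while no axiom of a $\mathcal{K}$-matrix identifies them (condition (iv) only constrains the degree-preserving diagonal) --- is exactly the step the paper's proof passes over when it sets $\mathbf{S}_{w,i+1}:=\mathcal{S}^{[uw,i]}_{[u,i+1]}$ without checking independence of the base point $u$. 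So you have located the gap correctly. The problem is that your proposed repair (normalizing $\mathcal{S}$ by a correction in the kernel of $L\mapsto \mathbf{F}(\mathbf{P}^\bullet)L+L\,\mathbf{F}(\widetilde{\mathbf{P}}^\bullet)$) cannot be carried out in general, because the converse implication is false as stated. Take $A_2=\Bbbk Q$ with $Q:1\rightrightarrows 2$ (arrows $a,b$), the complex $\mathbf{P}^\bullet:\ P_1\oplus P_2\xrightarrow{\partial}P_2$ in degrees $0,1$ with $\partial=\left(\begin{smallmatrix}p(a)+p(b)\\ 0\end{smallmatrix}\right)$, and the endomorphism $\varphi^\bullet$ given by $\varphi^0=\left(\begin{smallmatrix}0&p(a)\\ 0&0\end{smallmatrix}\right)$, $\varphi^1=0$. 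Since $\textup{Hom}(P_2,P_1)=0$, every candidate homotopy is $s^1=(0,\,c\,p(e_2))$ and $\partial s^1$ has $P_1\to P_2$ component $c(p(a)+p(b))$, which is never $p(a)$; hence $\varphi^\bullet\not\sim\mathbf{0}^\bullet$. On the matrix side the nonempty bands are $[e_{s(a)},0]<[a,0]<[e_{s(b)},0]<[b,0]<[a,1]<[b,1]$, all of size one; $\mathbf{F}(\mathbf{P}^\bullet)$ has entries $1$ at $([e_{s(a)},0],[a,1])$ and $([e_{s(b)},0],[b,1])$ and is zero elsewhere, and $\mathbf{F}(\varphi^\bullet)$ has a single entry $1$ at $([e_{s(a)},0],[a,0])$. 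The matrix $\mathcal{S}$ whose only nonzero block is $\mathcal{S}^{[a,0]}_{[a,1]}=1$ satisfies conditions (i), (iii) and (iv) --- (iii) permits this block since $u=v=a$ and $i=j+1$, and (iv) sees only the degree-preserving diagonal --- and a direct computation gives $\mathbf{F}(\mathbf{P}^\bullet)\mathcal{S}+\mathcal{S}\,\mathbf{F}(\mathbf{P}^\bullet)=\mathbf{F}(\varphi^\bullet)$. Thus $\mathbf{F}(\varphi^\bullet)\simeq\mathbf{0}$ although $\varphi^\bullet\not\sim\mathbf{0}^\bullet$.

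Moreover, in this example the bands $[e_{s(a)},1]$ and $[e_{s(b)},1]$ are empty, so the equation evaluated at the positions $([e_{s(a)},0],[a,0])$ and $([e_{s(b)},0],[b,0])$ forces $L^{[a,0]}_{[a,1]}=1$ and $L^{[b,0]}_{[b,1]}=0$ for \emph{every} $\mathcal{K}$-matrix $L$ realizing $\mathbf{F}(\varphi^\bullet)=\mathbf{F}(\mathbf{P}^\bullet)L+L\,\mathbf{F}(\mathbf{P}^\bullet)$. Hence no admissible correction can equalize the two $\sigma$-paired blocks: your normalization step is obstructed, not merely unverified, and the same objection defeats the paper's own argument at the line ``where $\mathbf{S}_{w,i+1}:=\mathcal{S}^{[uw,i]}_{[u,i+1]}$''. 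What survives is only the implication $\varphi^\bullet\sim\mathbf{0}^\bullet\Rightarrow\mathbf{F}(\varphi^\bullet)\simeq\mathbf{0}$. To make the equivalence true one would have to strengthen the definition of a $\mathcal{K}$-matrix so that the blocks encoding one and the same homotopy coefficient --- in particular the pairs $L^{[u,i-1]}_{[u,i]}$ and $L^{[\sigma(u),i-1]}_{[\sigma(u),i]}$ --- are required to coincide, and then recheck that the ideal property (Lemma~\ref{ideal}) and the triangulated structure of Section~\ref{sec:2} persist for the finer relation.
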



\begin{proof}
Suppose $\varphi^\bullet \sim \mathbf{0}^\bullet$. Then there exists a sequence of morphisms  $s^j:\mathbf{P}^j\longrightarrow \widetilde{\mathbf{P}}^{j-1}$ such that $\varphi^j=s^j\widetilde{\partial}^{j-1}+\partial^{j}s^{j+1}$.    From representations \eqref{form:diff} and \eqref{eq:form-morph}, we have
    \begin{align*}    
    \sum\limits_{w\in \textbf{Pa}}p(w)\phi_{w,j}&=\sum_{\substack{w_1w_2=w \\ w_1,  w_2 \in \textbf{Pa} }}p(w)\mathbf{S}_{w_1,j}\widetilde{\mathbf{A}}_{w_2,j-1}+\sum_{\substack{w_3w_4=w \\ w_3,  w_4 \in \textbf{Pa}}}p(w)\mathbf{A}_{w_3,j}\mathbf{S}_{w_4,j+1},
    \end{align*}
where each $s^j$  is represented  by the formal sum $s^j:\sum\limits_{w\in \textbf{Pa}}p(w)\mathbf{S}_{w,j}$. Applying the functor $\mathbf{F}$ on $\varphi^\bullet$

	\begin{align*}
	\mathbf{F}(\varphi^{\bullet} )^{[v,j]}_{[u,i]}&=\begin{cases}
      \phi_{w,j}  &,\ \ \text{if $j=i$, $v=uw$ and $w \in \mathbf{Pa}$},   \\
         0 &,\ \  \text{otherwise }. 
     \end{cases}\\
     &=\begin{cases}
      \sum\limits_{\substack{w_1w_2=w \\ w_1,  w_2 \in \textbf{Pa} }}\mathbf{S}_{w_1,j}\widetilde{\mathbf{A}}_{w_2,j-1}+\sum\limits_{\substack{w_3w_4=w \\ w_3,  w_4 \in \textbf{Pa}}}\mathbf{A}_{w_3,j}\mathbf{S}_{w_4,j+1}  &,\ \  \text{if $j=i$, $v=uw$ and $w \in \mathbf{Pa}$},   \\
         0 &,\ \  \text{otherwise }. 
     \end{cases}
	\end{align*}
Now, we consider the matrix $\mathcal{S}=\left(\mathcal{S}^{[v,j]}_{[u,i]}\right)_{{[u,i]},{[v,j]}\in \mathscr{Y}(A)}$, where the $([u,i],[v,j])$th block is defined by 
	$$
	\mathcal{S}^{[v,j]}_{[u,i]}=
	\begin{cases}
    \mathbf{S}_{w,i} &,\ \  \text{if $i=j+1$, $v=uw$ and $w \in \mathbf{Pa}$},   \\
    0 &,\ \  \text{otherwise. }
         \end{cases}
	$$
and each block $\mathcal{S}_{[u,i]}$ (respectively, $\mathcal{S}^{[u,i]}$) consists of $d_{t(u),i}$ rows (respectively, columns). 

It is straightforward to verify that, by construction, $\mathcal{S}$ is a $\mathcal{K}$-matrix, which consequently implies that $\mathbf{F}(\varphi^\bullet) \simeq \mathbf{F}(\mathbf{0}^\bullet)$.

Conversely, assume $\mathbf{F}(\varphi^\bullet) \simeq \mathbf{F}(\mathbf{0}^\bullet)=\mathbf{0}$. There exists a $\mathcal{K}$-matrix $\mathcal{S}$, such that $\mathbf{F}(\varphi^\bullet)=\mathcal{S}\mathbf{F}({\widetilde{\mathbf{P}}}^\bullet)+ \mathbf{F}({{\mathbf{P}}}^\bullet)\mathcal{S}$. Hence, the $([u,i],[v,j])$th block is 

	\begin{align*}
	\mathbf{F}(\varphi^{\bullet} )^{[v,j]}_{[u,i]}&=\sum_{[r,k]\in \mathscr{Y}(A)}\mathcal{S}^{[r,k]}_{[u,i]}\mathbf{F}({\widetilde{\mathbf{P}}}^\bullet)^{[v,j]}_{[r,k]}+ \sum_{[r,k]\in \mathscr{Y}(A)}\mathbf{F}({{\mathbf{P}}}^\bullet)^{[r,k]}_{[u,i]}\mathcal{S}^{[v,j]}_{[r,k]}.
	\end{align*}
From representations \eqref{form:diff}, \eqref{eq:form-morph} and Lemma~\ref{teclemma} we obtain 

	\begin{align*}    
   	\phi_{w,i}=\mathbf{F}(\varphi^{\bullet})^{[uw,j]}_{[u,i]}&=\sum_{\substack{w_1w_2=w \\ w_1,  w_2 \in \textbf{Pa} }}\mathcal{S}^{[uw_1,i-1]}_{[u,i]}\widetilde{\mathbf{A}}_{w_2,i-1}+\sum_{\substack{w_3w_4=w \\ w_3,  w_4 \in \textbf{Pa}}}\mathbf{A}_{w_3,i}\mathcal{S}^{[uw_3w_4,i]}_{[uw_3,i+1]}\\
   	&=\sum_{\substack{w_1w_2=w \\ w_1,  w_2 \in \textbf{Pa} }}\mathbf{S}_{w_1,i}\widetilde{\mathbf{A}}_{w_2,i-1}+\sum_{\substack{w_3w_4=w \\ w_3,  w_4 \in \textbf{Pa}}}\mathbf{A}_{w_3,j}\mathbf{S}_{w_4,i+1},
    \end{align*} 
where $
\mathbf{S}_{w,i+1}:=\mathcal{S}^{[uw,i]}_{[u,i+1]}$. Hence,
    \begin{align*}    
    \sum\limits_{w\in \textbf{Pa}}p(w)\phi_{w,i}&=\sum_{\substack{w_1w_2=w \\ w_1,  w_2 \in \textbf{Pa} }}p(w)\mathbf{S}_{w_1,i}\widetilde{\mathbf{A}}_{w_2,i-1}+\sum_{\substack{w_3w_4=w \\ w_3,  w_4 \in \textbf{Pa}}}p(w)\mathbf{A}_{w_3,i}\mathbf{S}_{w_4,i+1}.
    \end{align*}
Consequetenly, there exists a sequence of morphisms  $s^i:
    \mathbf{P}^i\longrightarrow \widetilde{\mathbf{P}}^{i-1},$ where each $s^i$  is represented  by the formal sum
$s^i:\sum\limits_{w\in \textbf{Pa}}p(w)\mathbf{S}_{w,i}$. Therefore, we have    
    $\varphi^j=s^j\widetilde{\partial}^{j-1}+\partial^{j}s^{j+1}$. 
    
\end{proof}


Let us mention two important consequences of the Propostion \ref{equiv}. First, it allows us to define a functor $\widehat{\mathbf{F}}:\mathbf{K}^b(\textup{proj} A)\longrightarrow \mathcal{K}(\mathscr{Y}(A),\Bbbk)$ induced by the functor $\mathbf{F}:\mathbf{C}^b(\textup{proj} A)\longrightarrow s(\mathscr{Y}(A),\Bbbk)$, which is given as follows:

\vspace{,5cm}

\emph{In objects}, $\mathbf{P}^{\bullet}\in\mathbf{K}^b(\textup{proj} A)$,
   	$$
   	\widehat{\mathbf{F}}(\mathbf{P}^{\bullet} )={\mathbf{F}}(\mathbf{P}^{\bullet}).
   	$$

\vspace{,3cm}

\emph{In morphisms}, $\overline{\varphi}^{\bullet}$ in $\mathbf{K}^b(\textup{proj} A)$,
   	$$
   	\widehat{\mathbf{F}}(\overline{\varphi}^{\bullet})=\overline{\mathbf{F}(\varphi^{\bullet})},
   	$$
where $\overline{\varphi}^{\bullet}$ and $\overline{\mathbf{F}(\varphi^{\bullet})}$ denote the equivalence classes under the relations ``$\sim$'' and ``$\simeq$'', respectively. 
   	
   	The second consequence is that $\widehat{\mathbf{F}}$ is an embedding functor, because $\ker \widehat{\mathbf{F}} = 0^\bullet$. Here, $\ker\widehat{\mathbf{F}}$ is defined by the complexes $\mathbf{P}^{\bullet}$ such that $\widehat{\mathbf{F}}(\mathbf{P}^{\bullet})=0$ is the zero object, which corresponds to the matrix where all the blocks are empty.

\vspace{,5cm}

Since the categories $\mathbf{K}^b(\textup{proj} A)$ and $\mathcal{K}(\mathscr{Y}(A),\Bbbk)$ are both triangulated (see \cite[Theorem 2.3.1, p. 11]{KZ98} and Theorem~\ref{main 1}), we can now present the main results of this section, which guarantee the existence of triangulated functors. The first result follows directly from Remark \ref{diagramkappa} together with Proposition \ref{standard}.


\begin{theorem}
The embedding functor $\widehat{\mathbf{F}}:\mathbf{K}^b(\textup{proj} A)\longrightarrow \mathcal{K}(\mathscr{Y}(A),\Bbbk)$ sends distinguished triangles of $\mathbf{K}^b(\textup{proj}A)$ to distinguished triangles of $\mathcal{K}(\mathscr{Y}(A),\Bbbk)$.
\end{theorem}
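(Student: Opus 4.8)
The plan is to reduce the statement, via the standard description of distinguished triangles in the homotopy category, to the single case of a mapping-cone (standard) triangle, and then to invoke Proposition~\ref{standard} together with Remark~\ref{diagramkappa}. First I would recall that, by the construction of the triangulated structure on $\mathbf{K}^b(\textup{proj}A)$ (see \cite[Theorem 2.3.1, p. 11]{KZ98}), every distinguished triangle of $\mathbf{K}^b(\textup{proj}A)$ is isomorphic to a standard triangle
$$\xymatrix{\mathbf{P}^\bullet\ar[r]^-{\overline{\varphi}^\bullet} & \widetilde{\mathbf{P}}^\bullet\ar[r]^-{\overline{\iota}_{\widetilde{\mathbf{P}}}^\bullet} & \mathbf{C}(\varphi)^\bullet\ar[r]^-{\overline{\pi}_{\mathbf{P}[1]}^\bullet} & \mathbf{P}^\bullet[1]}$$
attached to some morphism $\varphi^\bullet$ in $\mathbf{C}^b(\textup{proj}A)$. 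Since $\widehat{\mathbf{F}}$ is a well-defined functor on the quotient categories (Proposition~\ref{equiv}), it carries isomorphisms of triangles to isomorphisms of triangles, and the family $\mathcal{D}$ is closed under isomorphism by Remark~\ref{remark2}; hence it suffices to check that $\widehat{\mathbf{F}}$ sends each such standard triangle into $\mathcal{D}$.

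For this, I would work first at the level of $\mathbf{C}^b(\textup{proj}A)$ and $s(\mathscr{Y}(A),\Bbbk)$, where Proposition~\ref{standard} states exactly that $\mathbf{F}$ turns the standard triangle above into the $\mathcal{K}$-standard triangle
$$\xymatrix{\mathbf{F}(\mathbf{P}^\bullet)\ar[r]^-{\mathbf{F}(\varphi^\bullet)} & \mathbf{F}(\widetilde{\mathbf{P}}^\bullet)\ar[r]^-{\iota_{\mathbf{F}(\widetilde{\mathbf{P}}^\bullet)}} & \Omega_{\mathbf{F}(\varphi^\bullet)}\ar[r]^-{\pi_{\llbracket\mathbf{F}(\mathbf{P}^\bullet)\rrbracket}} & \llbracket\mathbf{F}(\mathbf{P}^\bullet)\rrbracket},$$
by means of the identifications $\mathbf{F}(\mathbf{C}(\varphi)^\bullet)=\Omega_{\mathbf{F}(\varphi^\bullet)}$, $\mathbf{F}(\iota_{\widetilde{\mathbf{P}}}^\bullet)=\iota_{\mathbf{F}(\widetilde{\mathbf{P}}^\bullet)}$ and $\mathbf{F}(\pi_{\mathbf{P}[1]}^\bullet)=\pi_{\llbracket\mathbf{F}(\mathbf{P}^\bullet)\rrbracket}$ established in that proof, together with the identity $\llbracket-\rrbracket\circ\mathbf{F}=\mathbf{F}\circ[1]$ recorded in Remark~\ref{diagramkappa}.

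Next I would transport this through the two localization (quotient) functors. By the very definition of the induced functor one has $\widehat{\mathbf{F}}(\overline{\varphi}^\bullet)=\overline{\mathbf{F}(\varphi^\bullet)}$, and because the autofunctor $\llbracket-\rrbracket$ preserves the relation $\simeq$, the induced shift on $\mathcal{K}(\mathscr{Y}(A),\Bbbk)$ still satisfies $\llbracket\widehat{\mathbf{F}}(\mathbf{P}^\bullet)\rrbracket=\widehat{\mathbf{F}}(\mathbf{P}^\bullet[1])$. Reducing each term of the triangle above modulo its $\simeq$-class therefore produces precisely the $\mathcal{K}$-standard triangle associated with the morphism $\overline{\mathbf{F}(\varphi^\bullet)}$ in the sense of Section~\ref{sec:2}, which belongs to $\mathcal{D}$ by Remark~\ref{remark2}. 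Combined with the reduction in the first paragraph, this shows that the image under $\widehat{\mathbf{F}}$ of an arbitrary distinguished triangle is distinguished.

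The step I expect to demand the most care is the compatibility of all these identifications with passage to the quotients: one must check that the localization $\mathbf{C}^b(\textup{proj}A)\to\mathbf{K}^b(\textup{proj}A)$ carries standard triangles to distinguished ones, that $\widehat{\mathbf{F}}$ is genuinely well defined on morphisms (this is the content of Proposition~\ref{equiv}), and that under $\widehat{\mathbf{F}}$ the canonical maps $\overline{\iota}_{\widetilde{\mathbf{P}}}^\bullet$ and $\overline{\pi}_{\mathbf{P}[1]}^\bullet$ become the canonical maps $\overline{\iota}_{C}$ and $\overline{\pi}_{\llbracket B\rrbracket}$ featuring in the definition of a $\mathcal{K}$-standard triangle. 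Once these bookkeeping points are settled, no further computation is required, since the matrix-level verification has already been performed in Proposition~\ref{standard}.
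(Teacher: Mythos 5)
Your proposal is correct and follows essentially the same route as the paper, which simply cites Remark~\ref{diagramkappa} and Proposition~\ref{standard} for this theorem; you have filled in the intermediate bookkeeping (reduction to standard triangles via closure of $\mathcal{D}$ under isomorphism, well-definedness of $\widehat{\mathbf{F}}$ on the quotients via Proposition~\ref{equiv}, and compatibility of the shift functors) that the paper leaves implicit.
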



It is well known that, if $A$ is an algebra of finite global dimension, the homotopy category $\mathbf{K}^b(\textup{proj} A)$ and the derived category $\mathbf{D}^b(A)$ are both triangulated categories and equivalents as triangulated categories (see \cite[Proposition 3.5.43, pp. 332-333]{Zim14}). Consequently,


\begin{corollary}
If $A$ is a gentle algebra of finite global dimension, then there exists a triangulated embedding functor from $\mathbf{D}^b(A)$ to $\mathcal{K}(\mathscr{Y}(A),\Bbbk)$. 

\end{corollary}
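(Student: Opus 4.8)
The final statement is the Corollary asserting that for a gentle algebra $A$ of finite global dimension, there exists a triangulated embedding functor $\mathbf{D}^b(A)\longrightarrow \mathcal{K}(\mathscr{Y}(A),\Bbbk)$.

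\medskip

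The plan is to assemble the Corollary purely by composing two triangulated functors that are already available. First I would invoke the preceding Theorem, which states that the embedding functor $\widehat{\mathbf{F}}:\mathbf{K}^b(\textup{proj}\,A)\longrightarrow \mathcal{K}(\mathscr{Y}(A),\Bbbk)$ sends distinguished triangles to distinguished triangles; combined with Remark~\ref{diagramkappa}, which gives the commutation $\llbracket -\rrbracket\circ\mathbf{F}=\mathbf{F}\circ[1]$ (hence $\llbracket -\rrbracket\circ\widehat{\mathbf{F}}=\widehat{\mathbf{F}}\circ[1]$ at the level of homotopy categories), this shows $\widehat{\mathbf{F}}$ is a genuine triangulated functor, and the discussion right after Proposition~\ref{equiv} records that $\ker\widehat{\mathbf{F}}=0^\bullet$, so $\widehat{\mathbf{F}}$ is faithful on objects in the required sense, i.e. an embedding.

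\medskip

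Second, I would use the standard fact, cited in the excerpt as \cite[Proposition 3.5.43, pp.\ 332--333]{Zim14}, that for an algebra $A$ of finite global dimension the natural functor $\mathbf{K}^b(\textup{proj}\,A)\longrightarrow \mathbf{D}^b(A)$ is an equivalence of triangulated categories. Let $\mathbf{E}:\mathbf{D}^b(A)\longrightarrow \mathbf{K}^b(\textup{proj}\,A)$ denote a quasi-inverse of this equivalence; being an equivalence of triangulated categories, $\mathbf{E}$ is itself triangulated and fully faithful. The desired functor is then the composite
\[
\mathbf{D}^b(A)\xrightarrow{\ \mathbf{E}\ }\mathbf{K}^b(\textup{proj}\,A)\xrightarrow{\ \widehat{\mathbf{F}}\ }\mathcal{K}(\mathscr{Y}(A),\Bbbk).
\]
A composite of triangulated functors is triangulated (it commutes with the shift functors and carries distinguished triangles to distinguished triangles), and a composite of a fully faithful functor with an embedding is again an embedding. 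This yields the claimed triangulated embedding functor.

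\medskip

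I do not expect a substantive obstacle here, since every ingredient is either proved earlier in the paper or quoted as a known result; the Corollary is essentially a formal corollary of the preceding Theorem together with the Zimmermann equivalence. The only point deserving care is bookkeeping about what ``embedding'' means: one should confirm that $\widehat{\mathbf{F}}$ is faithful (not merely injective on isomorphism classes of objects), so that precomposing with the fully faithful equivalence $\mathbf{E}$ preserves faithfulness; if the text's notion of embedding is the weaker ``$\ker\widehat{\mathbf{F}}=0^\bullet$'' condition, then one checks instead that this kernel condition is inherited by the composite, which is immediate since $\mathbf{E}$ sends nonzero objects to nonzero objects.
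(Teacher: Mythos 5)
Your proposal matches the paper's intended argument exactly: the paper states the corollary as an immediate consequence of the preceding theorem on $\widehat{\mathbf{F}}$ together with the Zimmermann equivalence $\mathbf{K}^b(\textup{proj}\,A)\simeq\mathbf{D}^b(A)$ for finite global dimension, which is precisely the composition you describe. Your extra remark about which notion of ``embedding'' is being used is a reasonable point of care, but the route is the same as the paper's.
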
    


\section*{Acknowledgements}
G.C. gratefully acknowledges the hospitality and excellent
working conditions at the Universidade Federal do Amazonas where  this work was completed. G.B. and G.C. was partially supported by the Coordena\c{c}\~{a}o de Aperfei\c coamento de Pessoal de N\'ivel Superior -- Brasil (CAPES) --  Finance Code 001. G.C. has been  supported by Fundação de Amparo à Pesquisa do Estado do Amazonas (FAPEAM). 

\end{document}